\providecommand\@dotsep{5}
\def\listtodoname{List of Todos}
\def\listoftodos{\@starttoc{tdo}\listtodoname}
\numberwithin{equation}{section}
\newtheorem{Th}{Theorem}[section]
\newtheorem{Prop}[Th]{Proposition}
\newtheorem{Lem}[Th]{Lemma}
\newtheorem{Cor}[Th]{Corollary}
\newtheorem{Def}[Th]{Definition}
\newtheorem{Rem}[Th]{Remark}
\newcommand{\eps}{\varepsilon}
\def\supp{\mathrm{supp}}
\def\N{\mathcal{N}}
\def\R{\mathbb{R}}
\newcommand{\h}{H^{s}_\eps}
\DeclareMathOperator{\e}{\varepsilon}
\newcommand{\C}{\mathbb{C}}
\begin{document}
\title[Nonlinear fractional magnetic Schr\"odinger equation]{Nonlinear fractional magnetic Schr\"odinger equation: existence and multiplicity}
\author[V. Ambrosio]{Vincenzo Ambrosio}
\address{Vincenzo Ambrosio\hfill\break\indent 
Dipartimento di Scienze Pure e Applicate (DiSPeA),\hfill\break\indent
Universit\`a degli Studi di Urbino `Carlo Bo'\hfill\break\indent
Piazza della Repubblica, 13\hfill\break\indent
61029 Urbino (Pesaro e Urbino, Italy)}
\email{\href{mailto:vincenzo.ambrosio@uniurb.it}{vincenzo.ambrosio@uniurb.it}}

\author[P. d'Avenia]{Pietro d'Avenia}
\address{Pietro d'Avenia\hfill\break\indent 
Dipartimento di Meccanica, Matematica e Management,\hfill\break\indent
Politecnico di Bari \hfill\break\indent
Via Orabona, 4\hfill\break\indent
70125 Bari (Italy)}
\email{\href{mailto:pietro.davenia@poliba.it}{pietro.davenia@poliba.it}}

\thanks{The authors are partially supported by grants of the group GNAMPA of INdAM}

\subjclass[2010]{35A15, 35R11, 35S05, 58E05.}

\date{\today}
\keywords{Fractional magnetic operators, Nehari manifold, Ljusternick-Schnirelmann Theory.}

\begin{abstract}
In this paper we focus our attention on the following nonlinear fractional Schr\"odinger equation with magnetic field
\begin{equation*}
\varepsilon^{2s}(-\Delta)_{A/\varepsilon}^{s}u+V(x)u=f(|u|^{2})u \quad \mbox{ in } \mathbb{R}^{N},
\end{equation*}
where $\varepsilon>0$ is a parameter, $s\in (0, 1)$, $N\geq 3$, $(-\Delta)^{s}_{A}$ is the fractional magnetic Laplacian, $V:\mathbb{R}^{N}\rightarrow \mathbb{R}$ and $A:\mathbb{R}^{N}\rightarrow \mathbb{R}^N$ are continuous potentials and $f:\mathbb{R}^{N}\rightarrow \mathbb{R}$ is a subcritical nonlinearity. By applying variational methods and Ljusternick-Schnirelmann theory, we prove existence and multiplicity of solutions for $\varepsilon$ small.
\end{abstract}

\maketitle

\section{introduction}

\noindent 
In this paper we consider the following fractional nonlinear Schr\"odinger equation
\begin{equation}\label{P}
\e^{2s}(-\Delta)_{A/\e}^{s}u+V(x)u=f(|u|^{2})u \quad \mbox{ in } \R^{N}
\end{equation}
where $\e>0$ is a parameter, $s\in (0, 1)$, $N\geq 3$, $V\in C(\R^{N}, \R)$ and $A\in C^{0,\alpha}(\R^{N},\R^{N})$, $\alpha\in(0,1]$,  are the electric and magnetic potentials respectively,  $u\in\R^N\to\C$, $f: \R\rightarrow \R$. The fractional magnetic Laplacian is defined by
\begin{equation}\label{operator}
(-\Delta)^{s}_{A}u(x)
:=c_{N,s} \lim_{r\rightarrow 0} \int_{B_{r}^{c}(x)} \frac{u(x)-e^{\imath (x-y)\cdot A(\frac{x+y}{2})} u(y)}{|x-y|^{N+2s}} dy,
\quad
c_{N,s}:=\frac{4^{s}\Gamma\left(\frac{N+2s}{2}\right)}{\pi^{N/2}|\Gamma(-s)|}.
\end{equation}
This nonlocal operator has been defined in \cite{DS} as a fractional extension (for an arbitrary $s\in(0,1)$) of the magnetic pseudorelativistic operator, or {\em Weyl pseudodifferential operator defined with mid-point prescription},
\begin{align*}
{\mathscr H}_A u (x)
&=
\frac{1}{(2\pi)^3}\int_{\R^6} e^{\imath (x-y)\cdot \xi } \sqrt{\Big|\xi - A\big(\frac{x+y}{2}\big)\Big|^2} u(y) dyd\xi\\
&=
\frac{1}{(2\pi)^3}\int_{\R^6} e^{\imath (x-y)\cdot \left(\xi+ A\big(\frac{x+y}{2}\big)\right) } \sqrt{|\xi|^2} u(y) dyd\xi,
\end{align*}
introduced in \cite{IT} by Ichinose and Tamura, through oscillatory integrals, as a {\em fractional relativistic} generalization of the magnetic Laplacian (see also \cite{I10} and the references therein). Observe that for smooth functions $u$,
\begin{equation*}
\begin{split}
{\mathscr H}_A u (x)
&=
- \lim_{\eps\searrow 0}\int_{B^c_\eps(0)} \left[e^{-\imath y \cdot A \big( x+ \frac{y}{2}\big)} u(x+y) - u(x) - 1_{\{|y|<1\}}(y) y\cdot (\nabla - \imath A(x))u(x)\right]d\mu\\
&=
\lim_{\eps\searrow 0}\int_{B^c_\eps(x)} \left[u(x)-e^{\imath (x-y)\cdot A\left(\frac{x+y}{2}\right)}u(y)\right] \mu(y-x)dy ,
\end{split}
\end{equation*}
where
\[
d\mu=
\mu(y)dy = 
\frac{\Gamma\left(\frac{N+1}{2}\right)}{\pi^\frac{N+1}{2} |y|^{N+1}}dy.
\]
For details about the consistency of the definition in \eqref{operator} we refer the reader to \cite{NPSV,PSV1,PSV2,SV}.

The study of nonlinear fractional Schr\"odinger equations attracted a great attention, specially in the case $A=0$ (see \cite{MRS} and references therein). For instance, 
Felmer et al. \cite{FQT} dealt with existence, regularity and symmetry of positive solutions when $V$ is constant, and $f$ is a superlinear function with subcritical growth; see also \cite{A,DPPV, A3} and \cite{DSS} for the nonlocal Choquard equation.
Secchi \cite{Secchi} obtained the existence of ground state solutions under the assumptions that the potential $V$ is coercive. Shang and Zhang \cite{SZ} considered a fractional Schr\"odinger equation involving a critical nonlinearity, investigating the relation between the number of solutions and the topology of the set where $V$ attains its minimum.
Alves and Miyagaki \cite{AM} studied the existence and the concentration of positive solutions via penalization method (see also \cite{A1, A6, FigS, HZ} for related results).

On the other hand, the classical magnetic nonlinear Schr\"odinger equation has been extensively investigated by many authors \cite{AFF, AS, Cingolani, Cingolani-Secchi, EL, K} by applying suitable variational and topological methods.

However, in our nonlocal setting, only few papers \cite{DS,FPV,MPSZ,ZSZ} dealt with the existence and multiplicity of fractional magnetic problems. Therefore, motivated by this, in the present work we are interested in the existence and multiplicity of solutions to \eqref{P} when  the potential $V$ verifies the following condition 
\begin{equation}
\label{condV}
\tag{V}
V_{\infty}=\liminf_{|x|\rightarrow \infty} V(x)>V_{0}=\inf_{x\in \R^{N}} V(x)>0
\end{equation}
introduced by Rabinowitz in \cite{Rab}.

In this context, the presence of the nonlocal operator \eqref{operator} makes our analysis more complicated and intriguing, and new techniques are needed to overcome the difficulties that appear.

Before to state our results, we introduce the assumptions on the nonlinearity. Along the paper we will assume that $f: \R\rightarrow \R$ is a $C^{1}$ function satisfying the following assumptions:
\begin{enumerate}[label=($f_\arabic*$),ref=$f_\arabic*$]
	\item \label{f1}$f(t)=0$ for $t\leq 0$;
	\item \label{f2}$\displaystyle{\lim_{t\rightarrow 0} f(t)=0}$;
	\item \label{f3}there exists $q\in (2, 2^{*}_{s})$, where $2^{*}_{s}=2N/(N-2s)$, such that $\lim_{t\rightarrow \infty} f(t)/t^{\frac{q-2}{2}}=0$;
	\item \label{f4}there exists $\theta>2$ such that $0<\frac{\theta}{2} F(t)\leq t f(t)$ for any $t>0$, where $F(t)=\int_{0}^{t} f(\tau)d\tau$;
	\item \label{f5}there exists $\sigma\in (2, 2^{*}_{s})$ such that $f'(t)\geq C_{\sigma} t^{\frac{\sigma-4}{2}}$ for any $t>0$.
\end{enumerate} 

A first result we get is the following.
\begin{Th}\label{thex}
	Assume that \eqref{condV} and (\ref{f1})--(\ref{f5}) hold. Then there exists $\e_{0}>0$ such that the problem \eqref{Pe} admits a ground state solution for any $\e\in (0, \e_{0})$.
\end{Th}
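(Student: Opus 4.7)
The plan is to follow a Nehari-manifold strategy, adapting classical concentration arguments to the nonlocal magnetic setting. After the change of variable $v(x)=u(\eps x)$, the problem \eqref{P} becomes
\[
(-\Delta)^{s}_{A_\eps} v + V_\eps(x) v = f(|v|^{2}) v \quad \text{in } \R^{N},
\]
with $A_\eps(x):=A(\eps x)$ and $V_\eps(x):=V(\eps x)$, and its natural energy functional $J_\eps$ is defined on a fractional magnetic Hilbert space $\h$. First I would verify the mountain-pass geometry of $J_\eps$: assumptions \eqref{f2}--\eqref{f3} combined with the diamagnetic inequality and the fractional Sobolev embedding give a uniform positive lower bound for $J_\eps$ on a small sphere about the origin, while \eqref{f4} produces a point $e\in\h$ with $J_\eps(e)<0$. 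Hypothesis \eqref{f5} ensures that, for every $v\ne 0$, the fibering map $t\mapsto J_\eps(tv)$ has a unique positive critical point, necessarily a maximum; thus the Nehari manifold $\mathcal{N}_\eps=\{v\in\h\setminus\{0\}:\langle J'_\eps(v),v\rangle=0\}$ is a $C^1$ constraint on which $J_\eps$ is bounded below, and the mountain-pass level $c_\eps$ coincides with $\inf_{\mathcal{N}_\eps}J_\eps$.

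The core of the argument is a comparison with the autonomous limit problems. For each $\mu>0$, let $m_\mu$ denote the least energy among nontrivial solutions of
\[
(-\Delta)^{s} w + \mu w = f(w^{2}) w \quad \text{in } \R^{N}.
\]
A standard scaling argument shows that $\mu\mapsto m_\mu$ is strictly increasing, so \eqref{condV} yields $m_{V_\infty}>m_{V_0}$. Using a positive ground state of the $V_0$-limit problem, cut off and translated to a minimum point $x_0$ of $V$ (multiplied by the gauge phase $e^{\imath A(x_0)\cdot x}$ to compensate the magnetic factor), as a test function on $\mathcal{N}_\eps$, I would show that $\ls_{\eps\to 0}c_\eps\leq m_{V_0}$.

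The central technical step is a compactness result: every Palais-Smale sequence of $J_\eps$ at a level $c<m_{V_\infty}$ admits a strongly convergent subsequence in $\h$. The proof proceeds by a concentration-compactness splitting: if the PS sequence $(v_n)$ does not converge strongly, after extraction and translation one finds a weak limit which, combined with a Brezis-Lieb decomposition of $J_\eps$ (justified by the pointwise diamagnetic inequality and the Hölder continuity of $A$), splits off a nontrivial solution of the autonomous limit problem at infinity associated with the constant potential $V_\infty$ and trivial magnetic field; the energy of that limit is at least $m_{V_\infty}$, contradicting $c<m_{V_\infty}$.

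Combining these ingredients, for $\eps<\eps_0$ sufficiently small we have $c_\eps<m_{V_\infty}$, so a Palais-Smale sequence produced by Ekeland's variational principle on $\mathcal{N}_\eps$ converges strongly to a critical point realising $c_\eps$, which is the sought ground state. The main obstacle is the compactness step: in the magnetic nonlocal framework one must carefully combine a Brezis-Lieb type splitting with the diamagnetic inequality and the local Hölder behavior of $A$, so as to identify the limit problem at infinity as a non-magnetic autonomous equation and exclude loss of mass at any scale.
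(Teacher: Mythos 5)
Your proposal follows essentially the same strategy as the paper: mountain-pass geometry, Nehari manifold characterization of $c_\eps$, comparison with the autonomous limit problems $(P_\mu)$ via a cut-off ground state multiplied by the gauge phase $e^{\imath A(x_0)\cdot x}$, and compactness of Palais--Smale sequences at levels below $c_{V_\infty}$. The two implementation details where the paper differs are worth noting, though neither represents a gap on your side. First, to establish the upper bound on $c_\eps$, the paper works with a ground state of the limit problem at level $\mu$ for a fixed $\mu\in(V_0,V_\infty)$ rather than at $V_0$; this makes the pointwise inequality $V(\eps x)\leq\mu$ on $\operatorname{supp}w_r$ available for all small $\eps$ and yields $c_\eps<c_{V_\infty}$ after choosing $r$ large but fixed, avoiding the double limit (first $\eps\to 0$, then $r\to\infty$) that your sharper claim $\limsup_{\eps\to0}c_\eps\leq c_{V_0}$ requires. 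Both routes work, but the intermediate $\mu$ keeps the estimate one-parameter. Second, the paper's compactness argument (Lemma~\ref{lem2.3} together with Proposition~\ref{prop2.1}) does not literally identify a limit solution at infinity: instead of splitting off a complex-valued solution of a magnetic limit problem, it projects the modulus $|v_n|$ onto the real Nehari manifold $\mathcal{M}_{V_\infty}$ via scalars $t_n$, shows $\limsup t_n\leq 1$ using \eqref{f5}, and compares energies directly through the diamagnetic inequality. This sidesteps the delicate question of what magnetic limit equation a translated weak limit would solve, which is precisely the difficulty your Brezis--Lieb version would need to address carefully (in particular, justifying that after translation the magnetic term becomes negligible). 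The paper's splitting in Proposition~\ref{prop2.1} is also formulated with the truncations $\hat u_j=\varphi_j u$ from Lemma~\ref{truncation} rather than with $u$ itself, again to control the nonlocal magnetic tail. Your identification of the key technical ingredients — diamagnetic inequality, H\"older continuity of $A$, and power-type decay of the positive ground state — is correct and matches the estimates $X_\eps\to 0$ in the paper's proof of \eqref{limwr}.
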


Now, let us introduce the sets
\begin{equation}\label{defM}
M=\{x\in \R^{N}: V(x)=V_{0}\}
\quad
\hbox{and}
\quad
M_{\delta}=\{x\in \R^{N}: \operatorname{dist}(x, M)< \delta\}
\hbox{ for } \delta>0.
\end{equation}

In order to obtain a multiplicity result for \eqref{P}, we consider the Ljusternik-Schnirelmann category: given a closed set $Y$ is  of a topological space $X$, the Ljusternik-Schnirelmann category of $Y$ in $X$, denoted by $\operatorname{cat}_{X}(Y)$, is the least number of closed and contractible sets in $X$ which cover $Y$ (see \cite{W}).\\
More precisely we have
\begin{Th}\label{thmf}
	Assume $V$ verifies \eqref{condV}, and $f$ satisfies (\ref{f1})--(\ref{f5}). Then, for any $\delta>0$ there exists $\e_{\delta}>0$ such that, for any $\e\in (0, \e_{\delta})$, the problem  \eqref{P} has at least $\operatorname{cat}_{M_{\delta}}(M)$ nontrivial solutions. 
\end{Th}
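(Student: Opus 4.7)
The plan is to adapt the classical Lusternik--Schnirelmann strategy (as in Benci--Cerami, and in the non-magnetic fractional setting of Alves--Miyagaki) to the complex-valued Nehari manifold associated with \eqref{P}. Let $J_\e$ denote the energy functional associated with \eqref{P} (set on the magnetic Hilbert space $H^s_\e$), $\mathcal{N}_\e$ its Nehari manifold, and $c_\e:=\inf_{\mathcal{N}_\e} J_\e$ the ground state level produced by Theorem \ref{thex}. Let $J_0$ be the limit autonomous functional (with $V\equiv V_0$ and $A\equiv 0$, obtained after freezing the magnetic phase), $\mathcal{N}_0$ the associated Nehari manifold and $c_{V_0}:=\inf_{\mathcal{N}_0}J_0$, and let $w\in H^s(\R^N,\R)$ be a positive ground state of the limit problem.

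The first step is to construct the map $\Phi_\e\colon M\to \mathcal{N}_\e$. For $y\in M$ I would set
\begin{equation*}
\Psi_{\e,y}(x):=\eta(|\e x-y|)\,w\!\Big(\frac{\e x-y}{\e}\Big)\,e^{\imath A(y)\cdot \frac{\e x-y}{\e}},
\end{equation*}
where $\eta$ is a cutoff supported in a small ball, and then define $\Phi_\e(y):=t_{\e,y}\Psi_{\e,y}$ with $t_{\e,y}>0$ the unique Nehari scaling. The key estimate to prove here is the uniform limit
\begin{equation*}
\lim_{\e\to 0^+}\,\sup_{y\in M}\,|J_\e(\Phi_\e(y))-c_{V_0}|=0,
\end{equation*}
which relies on a change of variables $z=(\e x-y)/\e$, the H\"older continuity of $A$ (used to control the magnetic phase $e^{\imath A(y)\cdot z}$ by $1$ up to lower order terms), and dominated convergence for the nonlocal seminorm in the presence of the magnetic Bourgain--Brezis--Mironescu--type kernel.

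The second step is the barycenter map $\beta_\e\colon\mathcal{N}_\e^{c_{V_0}+h(\e)}\to \R^N$, defined e.g.\ by
\begin{equation*}
\beta_\e(u):=\frac{\int_{\R^N}\chi(\e x)|u(x)|^2\,dx}{\int_{\R^N}|u(x)|^2\,dx},
\end{equation*}
with $\chi(x)=x$ for $|x|\le\rho$ (and extended appropriately). Using a concentration-compactness lemma along Palais--Smale sequences whose energy approaches $c_{V_0}$, I would show that $\beta_\e(\Phi_\e(y))=y+o_\e(1)$ uniformly for $y\in M$, so that, for $\e$ small, $\beta_\e\circ\Phi_\e$ is homotopic to the inclusion $M\hookrightarrow M_\delta$. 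Standard category theory (\cite{W}) then yields $\operatorname{cat}(\mathcal{N}_\e^{c_{V_0}+h(\e)})\geq \operatorname{cat}_{M_\delta}(M)$.

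The main technical obstacle is to establish that $J_\e$, restricted to $\mathcal{N}_\e$, satisfies the Palais--Smale condition at all levels $c<c_{V_\infty}$, and that for $\e$ small one has $c_{V_0}+h(\e)<c_{V_\infty}$. This requires a delicate splitting argument for magnetic PS sequences: after the diamagnetic inequality one controls $|u_n|$ in $H^s(\R^N)$; then one must rule out mass escaping to infinity using \eqref{condV}, and rule out dichotomy by comparing bubbles around suitable translated points to the limit problem (with the magnetic phase absorbed via a gauge factor $e^{-\imath A(y_n)\cdot x}$, justified by the H\"older regularity of $A$). Once the PS condition is in place, Lusternik--Schnirelmann theory provides at least $\operatorname{cat}_{M_\delta}(M)$ critical points of $J_\e|_{\mathcal{N}_\e}$ in $\mathcal{N}_\e^{c_{V_0}+h(\e)}$, and each such critical point gives a nontrivial solution of \eqref{P}, completing the proof.
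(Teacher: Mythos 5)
Your proposal follows essentially the same route as the paper: build $\Phi_\e\colon M\to\N_\e$ by truncating and phase-twisting the autonomous ground state (your choice of phase $e^{\imath A(y)\cdot z}$ coincides with the paper's $\tau_y$), prove that $J_\e(\Phi_\e(y))\to c_{V_0}$ uniformly in $y\in M$ using the H\"older continuity of $A$ and dominated convergence (the paper's Lemmas \ref{ADlem}--\ref{lem4.1}), set up the barycenter map, verify the Palais--Smale condition on $\N_\e$ below $c_{V_\infty}$ (Proposition \ref{prop2.2}), and then apply Ljusternik--Schnirelmann theory together with a Benci--Cerami-type category lemma.

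There is, however, a gap in the passage from $\beta_\e\circ\Phi_\e\simeq\iota$ to the bound $\operatorname{cat}(\widetilde{\N}_\e)\ge\operatorname{cat}_{M_\delta}(M)$. The Benci--Cerami argument requires $\beta_\e$ to map the \emph{entire} low-energy slab $\widetilde{\N}_\e=\{u\in\N_\e: J_\e(u)\le c_{V_0}+h(\e)\}$ into a small neighbourhood of $M_\delta$, uniformly as $\e\to 0$; it is not enough to know the behaviour of $\beta_\e$ on the range of $\Phi_\e$. You misattribute the concentration-compactness / Palais--Smale splitting to the much easier statement $\beta_\e(\Phi_\e(y))=y+o_\e(1)$ (a change of variables and dominated convergence, the paper's Lemma \ref{lem4.2}). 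In fact the compactness result (the paper's Proposition \ref{prop4.1}, combined with the diamagnetic inequality and the decay of the ground state) is what drives the barycenter localization for arbitrary $u\in\widetilde{\N}_\e$ (Lemma \ref{lem4.4}): every such $u$, after translation, concentrates like a ground state of the $V_0$-limit problem with translation parameter converging to a point of $M$. Without this uniform barycenter estimate on $\widetilde{\N}_\e$, the homotopy $\beta_\e\circ\Phi_\e\simeq\iota$ alone does not yield the category lower bound, and the multiplicity conclusion would not follow.
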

The proof of the above theorem is based on variational methods.

In the study of our problem, we will use the diamagnetic inequality recently established in \cite{DS} and some interesting decay properties of positive solutions to the limit problem associated to \eqref{P} (see \cite{FQT}).
These facts combined with the H\"older continuity assumption on the magnetic potential, will play an essential role to get some useful estimates needed to obtain the existence of  solutions and to implement the barycenter machinery.

The paper is organized as follows: in Section \ref{sec2} we introduce the functional setting and we give some fundamental tools and in Sections \ref{sec3} and \ref{sec4} we give the proof of Theorems \ref{thex} and \ref{thmf} respectively.

\subsection*{Notations:}
In what follows $|\cdot|_r$ denotes the $L^r(\R^N)$ norm, $\Re(z)$ is the real part of the complex number $z$, the letters $C$, $C_i$ will be repeatedly used to denote various positive constants whose exact values are irrelevant and can change from line to line, and $B_R(x)$ is the ball in $\R^N$ centered at $x$ with radius $R$.

\section{The space $H^{s}_{\e}$}\label{sec2}

By using the change of variable $x\mapsto \e x$ we can see that the problem (\ref{P}) is equivalent to the following one
\begin{equation}\label{Pe}
(-\Delta)_{A_{\e}}^{s} u + V_\eps( x)u =  f(|u|^{2})u  \mbox{ in } \R^{N},
\end{equation}
where $A_{\e}(x)=A(\e x)$ and $V_\eps(x)=V(\eps x)$.


For a function $u:\R^N\to\C$, let us denote by
$$
[u]^{2}_{A}:=\frac{c_{N,s}}{2}\iint_{\R^{2N}} \frac{|u(x)-e^{\imath (x-y)\cdot A(\frac{x+y}{2})} u(y)|^{2}}{|x-y|^{N+2s}} \, dxdy,
$$
and consider
$$
D_A^s(\R^N,\C)
:=
\left\{
u\in L^{2_s^*}(\R^N,\C) : [u]^{2}_{A}<\infty
\right\}.
$$
Then let us introduce the Hilbert space
$$
H^{s}_{\e}:=
\left\{
u\in D_{A_\eps}^s(\R^N,\C): \int_{\R^{N}} V(\e x) |u|^{2}\, dx <\infty
\right\}
$$ 
endowed with the scalar product
\begin{align*}
\langle u , v \rangle_\eps
&:=
\Re	\int_{\R^{N}} V(\e x) u \bar{v} dx\\
&\qquad
+ \frac{c_{N,s}}{2}\Re\iint_{\R^{2N}} \frac{(u(x)-e^{\imath(x-y)\cdot A_{\e}(\frac{x+y}{2})} u(y))\overline{(v(x)-e^{\imath(x-y)\cdot A_{\e}(\frac{x+y}{2})}v(y))}}{|x-y|^{N+2s}} dx dy
\end{align*}
and let
$$
\|u\|_{\e}:=\sqrt{\langle u , u \rangle_\eps}.
$$

Observe that for $A=0$ we recover the classical definition of $H^s(\R^N,\C)$ (for details we refer the reader to \cite{DPV}).

If $u\in \h$, let
\begin{equation}\label{eqtruncation}
\hat{u}_{j}(x):=\varphi_{j}(x)u(x)
\end{equation}
where $j\in\mathbb{N}^*$ and $\varphi_{j}(x)=\varphi(2x/j)$ with $\varphi\in C^{\infty}_{0}(\R^{N}, \R)$, $0\leq \varphi\leq 1$, $\varphi(x)=1$ if $|x|\leq 1$, and $\varphi(x)=0$ if $|x|\geq 2$. Note that $\hat{u}_{j}\in \h$ and $\hat{u}_{j}$ has compact support.\\
Proceeding as in \cite[Lemma 3.2]{ZSZ}, we get the following useful result.
\begin{Lem}\label{truncation}
	For any $\e>0$, it holds $\|\hat{u}_{j}-u\|_{\e}\rightarrow 0$ as $j\rightarrow \infty$.
\end{Lem}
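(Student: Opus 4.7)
The plan is to reduce the claim to two elementary estimates, one for the weighted $L^2$ term in the scalar product and one for the magnetic Gagliardo seminorm, each handled by dominated convergence after an appropriate decomposition.

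First I would write $w_j := u - \hat{u}_j = (1-\varphi_j) u$ and split $\|w_j\|_\eps^2$ into the potential piece $\int_{\R^N} V(\eps x) |w_j|^2 \, dx$ and the magnetic seminorm $[w_j]_{A_\eps}^2$. For the potential part, since $|1-\varphi_j|^2 |u|^2 \to 0$ pointwise and is dominated by $|u|^2$, which is integrable against $V(\eps \cdot)$ because $u \in H^s_\eps$ (so in particular $u \in L^2(\R^N)$ thanks to \eqref{condV}), the Lebesgue dominated convergence theorem gives the desired vanishing.

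The real work is on $[w_j]_{A_\eps}^2$. Writing $\theta_\eps(x,y) := (x-y)\cdot A_\eps\big(\tfrac{x+y}{2}\big)$, I would use the algebraic identity
\begin{align*}
w_j(x) - e^{\imath \theta_\eps(x,y)} w_j(y)
&= (1-\varphi_j(x))\bigl[u(x) - e^{\imath \theta_\eps(x,y)} u(y)\bigr]\\
&\qquad + e^{\imath \theta_\eps(x,y)} u(y) \bigl[\varphi_j(y) - \varphi_j(x)\bigr],
\end{align*}
together with $|a+b|^2 \leq 2|a|^2 + 2|b|^2$, to bound $[w_j]_{A_\eps}^2$ by $2(\mathrm{I}_j)+2(\mathrm{II}_j)$, where
\begin{align*}
(\mathrm{I}_j) &:= \frac{c_{N,s}}{2}\iint_{\R^{2N}} (1-\varphi_j(x))^2 \frac{|u(x) - e^{\imath \theta_\eps(x,y)} u(y)|^2}{|x-y|^{N+2s}}\, dx\, dy,\\
(\mathrm{II}_j) &:= \frac{c_{N,s}}{2}\iint_{\R^{2N}} |u(y)|^2 \frac{|\varphi_j(x) - \varphi_j(y)|^2}{|x-y|^{N+2s}}\, dx\, dy.
\end{align*}
The term $(\mathrm{I}_j)$ vanishes by dominated convergence: the integrand is bounded by the integrand defining $[u]_{A_\eps}^2<\infty$, and $(1-\varphi_j(x))^2 \to 0$ pointwise.

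The nontrivial step, and the one I expect to be the main obstacle, is showing $(\mathrm{II}_j) \to 0$, since it is a commutator-type term where the magnetic phase has already dropped out. I would split the domain according to $|x-y| \ge 1$ and $|x-y| < 1$. On $\{|x-y|\ge 1\}$ one has $|\varphi_j(x)-\varphi_j(y)|^2 \leq 4$, so the integrand is dominated by $4|u(y)|^2/|x-y|^{N+2s}$, which is globally integrable because $u \in L^2(\R^N)$ and $\int_{|z|\geq 1} |z|^{-(N+2s)}\, dz < \infty$; since $\varphi_j(x)-\varphi_j(y) \to 0$ pointwise, dominated convergence finishes this piece. On $\{|x-y| < 1\}$ I would use the Lipschitz estimate $|\varphi_j(x)-\varphi_j(y)| \leq \tfrac{2}{j}\|\nabla \varphi\|_\infty |x-y|$ to bound
\[
\iint_{\{|x-y|<1\}} |u(y)|^2 \frac{|\varphi_j(x) - \varphi_j(y)|^2}{|x-y|^{N+2s}}\, dx\, dy \leq \frac{C}{j^2} |u|_2^2 \int_{|z|<1}\frac{dz}{|z|^{N+2s-2}},
\]
where the last integral is finite since $s<1$. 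Thus $(\mathrm{II}_j) = O(j^{-2}) \to 0$, and combining everything yields $\|\hat u_j - u\|_\eps \to 0$.
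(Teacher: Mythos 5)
Your argument is correct and complete. Note that the paper itself does not spell out a proof for this lemma; it simply defers to \cite[Lemma 3.2]{ZSZ}. Your product-rule decomposition
\[
w_j(x)-e^{\imath\theta_\eps(x,y)}w_j(y)
=(1-\varphi_j(x))\bigl[u(x)-e^{\imath\theta_\eps(x,y)}u(y)\bigr]
+e^{\imath\theta_\eps(x,y)}u(y)\bigl[\varphi_j(y)-\varphi_j(x)\bigr],
\]
with the resulting split into the $(\mathrm{I}_j)$ term (controlled by $[u]^2_{A_\eps}$ and killed by dominated convergence) and the commutator term $(\mathrm{II}_j)$ (in which the magnetic phase drops out), is exactly the standard route for such truncation lemmas, and it is almost certainly what the cited reference does as well. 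Your handling of $(\mathrm{II}_j)$ is sound: far from the diagonal, dominated convergence applies with the dominating function $4|u(y)|^2|x-y|^{-(N+2s)}\mathbf{1}_{\{|x-y|\geq 1\}}$, using that $u\in L^2(\R^N)$ (which indeed follows from $u\in H^s_\eps$ together with $V\geq V_0>0$); near the diagonal, the Lipschitz bound $\|\nabla\varphi_j\|_\infty\leq\tfrac{2}{j}\|\nabla\varphi\|_\infty$ gives the explicit $O(j^{-2})$ decay since $\int_{|z|<1}|z|^{-(N+2s-2)}\,dz<\infty$ for $s<1$. The potential term is also handled correctly by dominated convergence with $V(\eps\cdot)|u|^2\in L^1$ as dominating function. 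No gap.
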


The space $\h$ satisfies the following fundamental properties.
\begin{Lem}
The space $\h$ is complete and $C_c^\infty(\R^N,\C)$ is dense in $\h$. 
\end{Lem}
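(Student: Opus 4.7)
The plan is to verify completeness via a direct Cauchy-sequence argument, exploiting the strict positivity of $V$ to get an $L^{2}$ limit and then identifying the Gagliardo-type kernel pointwise. Density will be established in two steps: first reduce to compactly supported functions by Lemma \ref{truncation}, then mollify, with care taken because the magnetic phase does not commute with convolution.

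\textbf{Completeness.} Let $\{u_{n}\}\subset\h$ be Cauchy. Since $V(\eps x)\geq V_{0}>0$, the sequence is Cauchy in $L^{2}(\R^{N},\C)$ and converges to some $u\in L^{2}$; up to a subsequence, $u_{n}\to u$ a.e. The Gagliardo-type kernels
\[
K_{n}(x,y):=\frac{u_{n}(x)-e^{\imath(x-y)\cdot A_{\eps}\left(\frac{x+y}{2}\right)}u_{n}(y)}{|x-y|^{(N+2s)/2}}
\]
are Cauchy in $L^{2}(\R^{2N},\C)$, hence converge to some $K\in L^{2}$; a further a.e. extraction forces $K(x,y)$ to coincide with $(u(x)-e^{\imath(x-y)\cdot A_{\eps}((x+y)/2)}u(y))/|x-y|^{(N+2s)/2}$. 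This yields $[u]_{A_{\eps}}<\infty$, and the diamagnetic inequality of \cite{DS} gives $|u|\in H^{s}(\R^{N},\R)\hookrightarrow L^{2^{*}_{s}}$, whence $u\in D^{s}_{A_{\eps}}(\R^{N},\C)\subset\h$ and $\|u_{n}-u\|_{\eps}\to 0$.

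\textbf{Density.} Thanks to Lemma \ref{truncation} it is enough to approximate each $u\in\h$ whose support lies in some $B_{R}$. The natural candidate is $u_{\delta}:=u*\rho_{\delta}\in C_{c}^{\infty}(B_{R+1})$ for a standard mollifier $\rho_{\delta}$ and $\delta\in(0,1)$. Convergence in the weighted $L^{2}$-norm is immediate because $V(\eps\cdot)$ is bounded on $B_{R+1}$. For the magnetic seminorm the main device is the pointwise splitting
\[
|v(x)-e^{\imath g_{\eps}(x,y)}v(y)|^{2}\leq 2|v(x)-v(y)|^{2}+2|v(y)|^{2}|1-e^{\imath g_{\eps}(x,y)}|^{2}
\]
with $v:=u_{\delta}-u$ and $g_{\eps}(x,y):=(x-y)\cdot A_{\eps}((x+y)/2)$. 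Applied to $u$ itself, combined with the elementary bound $|1-e^{\imath g_{\eps}(x,y)}|^{2}\leq\min(C_{R}|x-y|^{2},4)$ for $y\in B_{R+1}$ (valid because $A$ is continuous, hence bounded on compacts), this first shows that every compactly supported element of $\h$ lies automatically in $H^{s}(\R^{N},\C)$. Classical density of mollifications then gives $[v]_{H^{s}}\to 0$, while the polar integral $\int_{0}^{\infty}\min(C_{R}r^{2},4)\,r^{-1-2s}\,dr$ is finite for all $s\in(0,1)$, so the second term in the splitting is bounded by $C|v|_{2}^{2}\to 0$.

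\textbf{Main obstacle.} The phase $e^{\imath g_{\eps}(x,y)}$ depends on both variables and does not commute with convolution, which is why a pointwise splitting is unavoidable. The quantitative balance is delicate: the linear bound $|1-e^{\imath g_{\eps}}|\leq C|x-y|$ alone makes the polar integral $\int r^{-2s}\,dr$ near the origin diverge for $s\geq 1/2$; only the squared version $|1-e^{\imath g_{\eps}}|^{2}\leq C|x-y|^{2}$ supplies the extra power of $|x-y|$ that restores integrability on the whole range $s\in(0,1)$. Once this point is secured, the remainder follows standard patterns.
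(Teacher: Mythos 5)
Your proof is correct and follows essentially the same route as the paper: reduce density to compactly supported functions via Lemma \ref{truncation}, observe that such functions lie in $H^{s}(\R^N,\C)$ by the pointwise splitting and the bound $|1-e^{\imath t}|^{2}\leq\min(t^{2},4)$, then mollify and control the magnetic seminorm by the same splitting. The only (minor) divergence is in the completeness step, where you pass to the $L^{2}(\R^{2N})$ limit of the magnetic Gagliardo kernels and identify it a.e., whereas the paper invokes Fatou's Lemma directly on $\|u_{n}-u\|_{\eps}$; both are valid and equally short.
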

\begin{proof}
To prove that $\h$ is a complete space, let us consider a Cauchy sequence $(u_{n})$  in $H^{s}_{A_{\e}}$.
In particular $(\sqrt{V_\eps} u_{n})$ is a Cauchy sequence in $L^{2}(\R^{N}, \C)$, and being $V_\eps\geq V_{0}$ in $\R^N$, there exists $u\in L^{2}(\R^{N}, \C)$ such that $\sqrt{V_\eps} u_{n} \rightarrow \sqrt{V_\eps}u$ in $L^{2}(\R^{N}, \C)$ and a.e. in $\R^{N}$. By using Fatou's Lemma
we get $u_{n}\rightarrow u$ in $\h$.\\
To prove that $C^{\infty}_{c}(\R^{N}, \C)$ is dense in $\h$ we fix $u\in \h$ and we consider the sequence $\hat{u}_{j}(x)=u(x)\varphi({x}/{j})$  defined as in \eqref{eqtruncation}.\\
In view of Lemma \ref{truncation}, we know that  $\|\hat{u}_{j} -u \|_{\e}\rightarrow 0$ as $j\rightarrow \infty$ and so it is enough to prove the density for compact supported functions in $\h$. \\
Now, we consider $v\in \h$ with compact support, and assume that $\operatorname{supp}(v)\subset B_{R}(0)$. Taking into account
$$
|u(x)-u(y)|^{2}\leq 2 |u(x)-u(y)e^{\imath A_{\e}(\frac{x+y}{2})\cdot (x-y)}|^{2}+2|u(y)|^{2}|e^{\imath A_{\e}(\frac{x+y}{2})\cdot (x-y)}-1|^{2}
$$
and that, from $|e^{\imath t}-1|^{2}\leq 4$ and $|e^{\imath t}-1|^{2}\leq t^2$, we deduce
\begin{align*}
\int_{B_{R}(0)} |u(y)|^{2}dy \int_{\R^{N}} \frac{|e^{\imath A_{\e}(\frac{x+y}{2})\cdot (x-y)}-1|^{2}}{|x-y|^{N+2s}} dx 
&\leq
C\left[\int_{B_{R}(0)} |u(y)|^{2}dy \int_{|x-y|>1} \frac{1}{|x-y|^{N+2s}} dx\right.\\
&\
\left.+ \int_{B_{R}(0)} |u(y)|^{2}dy\int_{|x-y|\leq 1} \frac{\max_{|z|\leq \frac{2R+1}{2}}|A_{\e}(z)|^{2}}{|x-y|^{N+2s-2}} dx\right] \\
&<\infty,
\end{align*}
since $V_\eps \geq V_{0}$ in $\R^{N}$, we can see that $u\in H^{s}(\R^{N}, \C)$.\\
Then, it makes sense to define $u_{\e}=\rho_{\e}*u\in C^{\infty}_{c}(\R^{N}, \C)$, where $\rho_{\e}$ is a mollifier with $\supp(\rho_{\e})\subset B_\eps (0)$. Arguing as in \cite[Theorem 3.24]{HT}
we have that $u_{\e}\rightarrow u$ in $H^{s}(\R^{N}, \C)$ as $\e\rightarrow 0$.\\
Moreover there exists $K>0$ such that $\operatorname{supp}(u_{\e}-u)\subset B_{K}(0)$ for all $\e>0$ small enough and, arguing as before,
\begin{align*}
[u_{\e}-u]^{2}_{A_{\e}}
&\leq 2[u_{\e}-u]^{2}+2\iint_{\R^{2N}} |(u_{\e}-u)(y)|^{2}  \frac{|e^{\imath A_{\e}(\frac{x+y}{2})\cdot (x-y)}-1|^{2}}{|x-y|^{N+2s}} dxdy \\
&\leq 2[u_{\e}-u]^{2}
+C\left[\int_{B_{K}(0)} |(u_{\e}-u)(y)|^{2} dy \int_{|x-y|>1} \frac{1}{|x-y|^{N+2s}} dx\right.\\
&\qquad
+\left. \int_{B_{K}(0)} |(u_{\e}-u)(y)|^{2} dy\int_{|x-y|\leq 1} \frac{(\max_{|z|\leq \frac{2K+1}{2}}|A_{\e}(z)|)^{2}}{|x-y|^{N+2s-2}} dx \right] \\
&\leq 2[u_{\e}-u]^{2}+C\int_{B_{K}(0)} |(u_{\e}-u)(y)|^{2} dy \rightarrow 0 \mbox{ as } \e\rightarrow 0.
\end{align*}
\end{proof}

Using \eqref{condV} and the pointwise diamagnetic inequality
\[
||u(x)| - |u(y)||\leq \left|u(x)-e^{\imath (x-y)\cdot A\left(\frac{x+y}{2}\right)}u(y)\right|,
\]
we can proceed as in \cite[Lemma 3.1]{DS} to prove that if $u\in H_\eps^s$, then $|u|\in H^s(\R^N,\R)$ and the following fractional diamagnetic inequality
\begin{equation}
\label{eqDI}
[|u|]^2 \leq [u]^{2}_{A_{\e}}
\end{equation}
holds, where
\[
[u]^2 := \frac{c_{N,s}}{2}\iint_{\R^{2N}} \frac{|u(x)- u(y)|^{2}}{|x-y|^{N+2s}} \, dxdy.
\]
Then, arguing as in \cite[Lemma 3.5]{DS} and using \cite[Lemma 3.2]{Cheng}, we get 
\begin{Lem}\label{embedding}
	The space $H^{s}_{\e}$ is continuously embedded in $L^{r}(\R^{N}, \C)$ for $r\in [2, 2^{*}_{s}]$, and compactly embedded in $L_{\rm loc}^{r}(\R^{N}, \C)$ for $r\in [1, 2^{*}_{s})$.\\
	Moreover, if $V_\infty=\infty$, then, for any bounded sequence $(u_{n})$  in $\h$, we have that, up to a subsequence, $(|u_{n}|)$ is strongly convergent in $L^{r}(\R^{N}, \R)$ for $r\in [2, 2^{*}_{s})$.
\end{Lem}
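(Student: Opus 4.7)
The overall strategy is to reduce every statement to the scalar (non-magnetic) case by exploiting the diamagnetic inequality \eqref{eqDI}. For $u\in \h$ we have $|u|\in H^s(\R^N,\R)$, and since $V_\eps\geq V_0>0$,
\[
\||u|\|_{H^s(\R^N,\R)}^2 = [|u|]^2 + \||u|\|_{L^2}^2 \leq [u]_{A_\eps}^2 + \frac{1}{V_0}\int_{\R^N} V_\eps |u|^2\,dx \leq C\,\|u\|_\eps^2.
\]
Combined with the classical fractional Sobolev embedding (see \cite{DPV}) $H^s(\R^N,\R)\hookrightarrow L^r(\R^N,\R)$ for $r\in[2,2^*_s]$, and the equality $\int_{\R^N}|u|^r dx = \int_{\R^N} (|u|)^r dx$ for complex-valued $u$, this immediately gives the continuous embedding $\h\hookrightarrow L^r(\R^N,\C)$ for $r\in[2,2^*_s]$.

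For the local compactness, I would take a bounded sequence $(u_n)\subset\h$; the estimate above shows $(|u_n|)$ is bounded in $H^s(\R^N,\R)$, so by the compactness of the embedding $H^s(B_R)\hookrightarrow\hookrightarrow L^r(B_R)$ for $r\in[1,2^*_s)$ and any ball $B_R\subset\R^N$, a subsequence of $(|u_n|)$ converges strongly in $L^r(B_R,\R)$. By a standard diagonal argument over an exhaustion $B_{R_k}\uparrow \R^N$ and noting $\big||u_n|-|u|\big|\leq |u_n-u|$, we obtain $u_n\to u$ in $L^r_{\rm loc}(\R^N,\C)$ up to subsequence. The statement for $\h$ itself then follows since convergence in $\h$ is stronger than weak-$\h$ plus local convergence.

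Finally, suppose $V_\infty=\infty$ and let $(u_n)$ be bounded in $\h$. For the global compactness, the key ingredient is tightness: given $\eta>0$, by \eqref{condV} with $V_\infty=\infty$ we can choose $R>0$ so large that $V_\eps(x)\geq 1/\eta$ for $|x|\geq R$, whence
\[
\int_{|x|\geq R} |u_n|^2\,dx \leq \eta\int_{|x|\geq R} V_\eps |u_n|^2\,dx \leq \eta\,\|u_n\|_\eps^2 \leq C\eta.
\]
Together with the local $L^2$-convergence from the previous step this yields strong convergence of $(|u_n|)$ in $L^2(\R^N,\R)$ up to subsequence. The statement for general $r\in[2,2^*_s)$ follows by standard interpolation, using the uniform bound $\||u_n|\|_{L^{2^*_s}}\leq C$ coming from the continuous embedding. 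The main (and only delicate) point is this tightness step, since it is where the assumption $V_\infty=\infty$ is actually used; the rest is essentially a transcription of the $A=0$ argument via \eqref{eqDI}, following \cite[Lemma 3.5]{DS} and \cite[Lemma 3.2]{Cheng}.
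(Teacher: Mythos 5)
Your argument for the continuous embedding $\h\hookrightarrow L^r(\R^N,\C)$ is correct: $\|u\|_{L^r}=\||u|\|_{L^r}$, the diamagnetic inequality \eqref{eqDI} gives $\||u|\|_{H^s}\leq C\|u\|_\eps$, and the classical Sobolev embedding for $|u|$ does the rest. The global compactness of $(|u_n|)$ when $V_\infty=\infty$ is also fine: the tightness estimate $\int_{|x|\geq R}|u_n|^2\,dx\leq C\eta$ plus the local compactness of $(|u_n|)$ in $L^2_{\rm loc}(\R^N,\R)$ (which is a genuine scalar statement) and interpolation with the $L^{2^*_s}$ bound give the conclusion.

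However, there is a genuine gap in your treatment of the compact embedding $\h\hookrightarrow\hookrightarrow L^r_{\rm loc}(\R^N,\C)$. You establish that a subsequence of $(|u_n|)$ converges in $L^r_{\rm loc}(\R^N,\R)$, and then invoke $\big||u_n|-|u|\big|\leq|u_n-u|$ to conclude $u_n\to u$ in $L^r_{\rm loc}(\R^N,\C)$. That inequality runs in the \emph{wrong direction}: it shows that convergence of $u_n$ forces convergence of $|u_n|$, not conversely. Convergence of the moduli does not control the phases, so you cannot recover strong $L^r_{\rm loc}$ convergence of the complex-valued $u_n$ from that of $|u_n|$. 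The correct route (which is implicit in \cite[Lemma 3.5]{DS} and is carried out in this paper in the proof of completeness/density of $\h$) is to show that on any bounded set $B_R(0)$ the \emph{complex} Gagliardo seminorm $[u]$ is controlled by $[u]_{A_\eps}$ plus $|u|_{L^2(B_R)}$, via
\[
|u(x)-u(y)|^2\leq 2\,\big|u(x)-e^{\imath A_\eps(\frac{x+y}{2})\cdot(x-y)}u(y)\big|^2+2\,|u(y)|^2\,\big|e^{\imath A_\eps(\frac{x+y}{2})\cdot(x-y)}-1\big|^2
\]
together with $|e^{\imath t}-1|\leq\min\{2,|t|\}$ and the local boundedness of $A$; this gives a continuous local embedding of $\h$ into $H^s(B_R,\C)$, and then Rellich--Kondrachov on bounded domains (cf.\ \cite{DPV}) yields the claimed compactness. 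Your "the statement for $\h$ itself then follows since convergence in $\h$ is stronger than weak-$\h$ plus local convergence" is circular: the local compactness for $\h$ is precisely what is at stake.
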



For compact supported functions in $H^{s}(\R^{N}, \R)$ we can prove the following result.
\begin{Lem}\label{aux}
If $u\in H^{s}(\R^{N}, \R)$ and $u$ has compact support, then $w=e^{\imath A(0)\cdot x} u \in \h$.
\end{Lem}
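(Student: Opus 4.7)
The plan is to verify the three defining conditions of $\h$ for $w=e^{\imath A(0)\cdot x}u$: namely $w\in L^{2_s^*}$, $\int V_\eps|w|^2\,dx<\infty$, and $[w]^2_{A_\eps}<\infty$. The first two are immediate since $|w|=|u|$: Sobolev embedding $H^s(\R^N,\R)\hookrightarrow L^{2_s^*}$ gives the first; and compactness of $\supp u$ together with the continuity of $V$ (so $V(\eps\cdot)$ is bounded on that support) gives the second, bounded by $C|u|_2^2$.

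The real content is controlling $[w]^2_{A_\eps}$, and the key observation is a phase factoring identity: factoring $e^{\imath A(0)\cdot x}$ out from the difference gives
\[
\bigl|w(x)-e^{\imath(x-y)\cdot A_\eps(\frac{x+y}{2})}w(y)\bigr|
=
\bigl|u(x)-e^{\imath(x-y)\cdot(A_\eps(\frac{x+y}{2})-A(0))}u(y)\bigr|,
\]
because $A(0)\cdot x-A(0)\cdot y=A(0)\cdot(x-y)$ combines with the exponent. Applying the elementary inequality $|a-e^{\imath t}b|^2\leq 2|a-b|^2+2|b|^2|e^{\imath t}-1|^2$ with $t=(x-y)\cdot(A_\eps(\tfrac{x+y}{2})-A(0))$, I reduce to bounding
\[
2[u]^2+2\iint_{\R^{2N}}\frac{|u(y)|^2\,\bigl|e^{\imath(x-y)\cdot(A_\eps(\frac{x+y}{2})-A(0))}-1\bigr|^2}{|x-y|^{N+2s}}\,dxdy.
\]
The first summand is finite since $u\in H^s(\R^N,\R)$.

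For the double integral I split into $|x-y|>1$ and $|x-y|\leq 1$, exactly as in the density proof earlier in the excerpt. On $\{|x-y|>1\}$ use the crude bound $|e^{\imath t}-1|^2\leq 4$ and integrate in $x$ to get a constant times $|u|_2^2$. On $\{|x-y|\leq 1\}$ use $|e^{\imath t}-1|^2\leq t^2$; since $u$ is supported in some $B_R(0)$, only $y\in B_R(0)$ contribute, and then $|x-y|\leq 1$ forces $\frac{x+y}{2}\in B_{R+1/2}(0)$, so the H\"older continuity of $A$ gives
\[
\Bigl|A_\eps\bigl(\tfrac{x+y}{2}\bigr)-A(0)\Bigr|
\leq [A]_{C^{0,\alpha}}\,\eps^\alpha(R+\tfrac{1}{2})^\alpha=:C_\eps,
\]
yielding an integrand bounded by $C_\eps^2|u(y)|^2|x-y|^{2-N-2s}$, which is integrable in $x$ over $\{|x-y|\leq 1\}$ since $s<1$ and then integrable in $y$ against $|u|_2^2$. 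The main (minor) obstacle is this H\"older-plus-splitting estimate, and it is precisely where the hypothesis $A\in C^{0,\alpha}$ enters; everything else is bookkeeping from the phase factoring.
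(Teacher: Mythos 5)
Your proof is correct and follows essentially the same route as the paper's: factor out the phase $e^{\imath A(0)\cdot x}$, apply the elementary inequality $|a-e^{\imath t}b|^2\leq 2|a-b|^2+2|b|^2|e^{\imath t}-1|^2$, and then split the remaining double integral along $|x-y|\lessgtr 1$ using $|e^{\imath t}-1|^2\leq 4$ on the far region and $|e^{\imath t}-1|^2\leq t^2$ on the near region. The one small difference is that you invoke the H\"older seminorm to bound $|A_\eps(\tfrac{x+y}{2})-A(0)|$ by $C\eps^\alpha$, whereas the paper only uses continuity of $A$ (a sup over the compact set $\{|z|\leq\tfrac{2R+1}{2}\}$); for this lemma mere continuity is enough, so $A\in C^{0,\alpha}$ is not actually where the H\"older hypothesis is needed in the paper.
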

\begin{proof}
	Assume that $\operatorname{supp}(u)\subset B_{R}(0)$.
	Since $V$ is continuous it is clear that
	$$
	\int_{\R^{N}} V(\e x)|w|^{2} dx=\int_{B_{R}(0)} V(\e x)|u|^{2} dx\leq C|u|_2^2<\infty.
	$$ 
	Therefore, it is enough to show that $[w]_{A_\eps}<\infty$.\\
	Recalling that $A$ is continuous and $|e^{\imath t}-1|^{2}\leq 4$ and $|e^{\imath t}-1|^{2}
	\leq t^{2}$ for all $t\in \R$, we have
	\begin{align*}
	[w]_{\e}^{2}
	&=\iint_{\R^{2N}} \frac{|e^{\imath A(0)\cdot x}u(x)-e^{\imath A(0)\cdot y} e^{\imath A_{\e}(\frac{x+y}{2})\cdot (x-y)}u(y)|^{2}}{|x-y|^{N+2s}} dx dy  \\
	&\leq 2 [u]^{2}+2\iint_{\R^{2N}} \frac{u^{2}(y) |e^{\imath [A_{\e}(\frac{x+y}{2})-A(0)]\cdot (x-y)}-1|^{2}}{|x-y|^{N+2s}} dx dy \\
	&\leq 2 [u]^{2}+2 \int_{B_{R}(0)} u^{2}(y) dy \Bigl[\int_{|x-y|\geq 1} \frac{4}{|x-y|^{N+2s}} dx +\int_{|x-y|<1}  \frac{|A_{\e}(\frac{x+y}{2})-A(0)|^{2}}{|x-y|^{N+2s-2}} dx\Bigr] \\
	&\leq 2 [u]^{2}+2 \int_{B_{R}(0)} u^{2}(y) dy \Bigl[\int_{|x-y|\geq1} \frac{4}{|x-y|^{N+2s}} dx +\int_{|x-y|<1}  \frac{(\max_{|z|\leq \frac{2R+1}{2}}[|A_{\e}(z)|+|A(0)|])^{2}}{|x-y|^{N+2s-2}} dx\Bigr] \\
	&\leq 2 [u]^{2}+C \int_{B_{R}(0)} u^{2}(y) dy \Bigl[\int_{1}^{\infty} \frac{1}{\rho^{2s+1}} d\rho + \int_{0}^{1}  \frac{1}{\rho^{2s-1}} d\rho\Bigr]<\infty
	\end{align*}
	because of $u\in H^{s}(\R^{N}, \R)$ and $s\in (0, 1)$.
\end{proof}

Moreover we have the following Lions-type Lemma (see \cite[Lemma 2.2]{FQT}).
\begin{Lem}
	\label{Lions}
	Let $N\geq 2$. If $(u_{n})$ is a bounded sequence in $H^{s}(\R^{N}, \R)$ and if 
	\begin{equation*}
	\lim_{n} \sup_{y\in \R^{N}} \int_{B_{R}(y)} |u_{n}|^{2} dx=0
	\end{equation*}
	where $R>0$, then $u_{n}\rightarrow 0$ in $L^{r}(\R^{N}, \R)$ for all $r\in (2, 2^{*}_{s})$.
\end{Lem}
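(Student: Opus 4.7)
The plan is to prove the convergence via a covering argument combined with interpolation. First I would fix a countable family of balls $\{B_R(y_i)\}_{i\in\mathbb{N}}$ that covers $\R^N$ with uniformly bounded overlap, i.e., $\sum_i \mathbf{1}_{B_R(y_i)}\le k_0$ for some $k_0=k_0(N)$. The strategy is first to show $u_n\to 0$ in $L^{r_0}(\R^N,\R)$ for the distinguished exponent
\[
r_0:=4-\frac{4}{2^*_s}\in (2,2^*_s),
\]
chosen so that the H\"older-interpolation exponent $\theta_0\in(0,1)$ defined by $\tfrac{1}{r_0}=\tfrac{1-\theta_0}{2}+\tfrac{\theta_0}{2^*_s}$ satisfies $r_0\theta_0=2$ and $r_0(1-\theta_0)=r_0-2$; the full range $r\in(2,2^*_s)$ will then follow by interpolating $L^r$ between $L^2$, $L^{r_0}$ and $L^{2^*_s}$.

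For the key estimate, on each ball H\"older's inequality together with the fractional Sobolev embedding $H^s(B_R(y_i))\hookrightarrow L^{2^*_s}(B_R(y_i))$ (whose constant is uniform in $y_i$ because the balls have the same radius) give
\[
\int_{B_R(y_i)}|u_n|^{r_0}\,dx \le \|u_n\|_{L^2(B_R(y_i))}^{r_0-2}\,\|u_n\|_{L^{2^*_s}(B_R(y_i))}^{2} \le C\,\|u_n\|_{L^2(B_R(y_i))}^{r_0-2}\,\|u_n\|_{H^s(B_R(y_i))}^{2}.
\]
Bounding the $L^2$-factor pointwise by the global supremum and summing over $i$, the essential ingredient is $\sum_i\|u_n\|_{H^s(B_R(y_i))}^{2}\le k_0\,\|u_n\|_{H^s(\R^N)}^{2}$; the $L^2$-part is immediate from finite overlap, and for the Gagliardo seminorm the crucial observation is $\sum_i \mathbf{1}_{B_R(y_i)}(x)\mathbf{1}_{B_R(y_i)}(y)\le k_0$, so that Tonelli's theorem gives $\sum_i[u_n]_{H^s(B_R(y_i))}^{2}\le k_0\,[u_n]_{H^s(\R^N)}^{2}$. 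Altogether,
\[
|u_n|_{r_0}^{r_0}\le C\,\Big(\sup_{y\in\R^N}\|u_n\|_{L^2(B_R(y))}^{2}\Big)^{\!(r_0-2)/2}\|u_n\|_{H^s(\R^N)}^{2}\longrightarrow 0,
\]
by the boundedness of $(u_n)$ in $H^s(\R^N,\R)$ and the vanishing hypothesis.

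Finally, I would extend to arbitrary $r\in(2,2^*_s)$ by interpolation: if $r\in(2,r_0)$, use $|u_n|_r\le |u_n|_2^{\mu}|u_n|_{r_0}^{1-\mu}$ for a suitable $\mu\in(0,1)$, exploiting that $|u_n|_2$ is bounded while $|u_n|_{r_0}\to 0$; if $r\in(r_0,2^*_s)$, use instead $|u_n|_r\le |u_n|_{r_0}^{\mu}|u_n|_{2^*_s}^{1-\mu}$, bounding $|u_n|_{2^*_s}\le C\,\|u_n\|_{H^s(\R^N)}$ via the global fractional Sobolev embedding. The main obstacle I anticipate is the summation step for the nonlocal Gagliardo seminorm: unlike a local gradient energy, $[u]_{H^s}^{2}$ does not split additively over a disjoint partition, which is precisely why the bounded-overlap kernel argument above (exploiting the positivity of the kernel) is essential.
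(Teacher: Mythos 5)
Your proposal is correct. Note first that the paper does not actually prove this lemma: it simply cites \cite[Lemma~2.2]{FQT}, so there is no in-text argument to compare against. Your proof is the standard Lions covering argument adapted to the fractional setting, and it is exactly the strategy used in the cited reference and in its classical $H^1$-counterpart (Willem's Lemma~1.21). All the technical details check out: the distinguished exponent $r_0=4-4/2^*_s$ does satisfy the stated interpolation identities and lies in $(2,2^*_s)$; the pointwise inequality $\sum_i \mathbf{1}_{B_R(y_i)}(x)\mathbf{1}_{B_R(y_i)}(y)\le\sum_i\mathbf{1}_{B_R(y_i)}(x)\le k_0$ correctly handles the lack of additivity of the Gagliardo seminorm; the Sobolev constant for $H^s(B_R(y_i))\hookrightarrow L^{2^*_s}(B_R(y_i))$ is translation invariant and hence uniform; and the final two-step interpolation to cover the full range $r\in(2,2^*_s)$ is sound since $|u_n|_2$ and $|u_n|_{2^*_s}$ stay bounded. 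You correctly identify the one genuinely nonlocal obstacle (the seminorm does not split over a cover) and resolve it; this is a complete, self-contained proof of the result the paper outsources to the literature.
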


Arguing as in \cite[Lemma 3.2]{DL} and taking into account Lemma \ref{embedding} we can prove
\begin{Lem}\label{vanishing}
	Let $\tau\in [2, 2^{*}_{s})$ and $(u_{n})\subset \h$ be a bounded sequence. Then there exists a subsequence $(u_{n_{j}})\subset \h$ such that for any $\sigma>0$ there exists $r_{\sigma,\tau}>0$ such that 
	\begin{equation}\label{DL}
	\limsup_{j} \int_{B_{j}(0)\setminus B_{r}(0)} |u_{n_{j}}|^{\tau} dx\leq \sigma
	\end{equation}
	for any $r\geq r_{\sigma}$.
\end{Lem}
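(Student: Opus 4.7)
The plan is to reduce from the complex magnetic setting to the real scalar one via the diamagnetic inequality, and then to run a two-step diagonal extraction on the cumulative mass functions of $|u_{n}|^{\tau}$.

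By \eqref{eqDI} and Lemma \ref{embedding}, the sequence $(u_{n})$ is bounded in $L^{\tau}(\R^{N},\C)$, hence, setting $\phi_{n}(k) := \int_{B_{k}(0)}|u_{n}|^{\tau}\,dx$, one has $\phi_{n}(k) \leq C$ uniformly in $n,k$, and each $\phi_{n}$ is nondecreasing in $k$. A first diagonal extraction over $k \in \mathbb{N}$ (using that $\phi_{n}(k)$ is a bounded real sequence for every fixed $k$) yields a subsequence along which $\phi_{n}(k) \to a_{k}$ for every $k$, with $(a_{k})$ nondecreasing and bounded, so $a_{k}\nearrow a_{\infty}$ for some $a_{\infty}\leq C$. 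A second refinement then produces $(u_{n_{j}})$ such that $|\phi_{n_{j}}(k) - a_{k}| < 1/j$ for every $k \leq j$; this is possible because for each fixed $j$ only finitely many conditions need to be enforced, each eventually satisfied along the previous subsequence.

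Given $\sigma>0$, I would choose $r_{\sigma,\tau}\in \mathbb{N}$ with $a_{\infty} - a_{r} \leq \sigma$ for every $r \geq r_{\sigma,\tau}$, which is possible since $a_{r} \nearrow a_{\infty}$. Then for any $r \geq r_{\sigma,\tau}$ and $j \geq r$,
\[
\int_{B_{j}(0)\setminus B_{r}(0)} |u_{n_{j}}|^{\tau}\,dx = \phi_{n_{j}}(j) - \phi_{n_{j}}(r),
\]
and the refined extraction gives $\phi_{n_{j}}(j)\to a_{\infty}$ (taking $k=j$ in the estimate) and $\phi_{n_{j}}(r)\to a_{r}$ (taking $k=r$) as $j\to\infty$, so the $\limsup_{j}$ of the left-hand side equals $a_{\infty} - a_{r} \leq \sigma$. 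The delicate point is precisely this refined diagonal step: one must arrange $\phi_{n_{j}}(k) \approx a_{k}$ even when $k$ increases together with $j$, otherwise the outer ball $B_{j}(0)$ would not record the full limit mass $a_{\infty}$ and the limsup could strictly exceed $a_{\infty} - a_{r}$.
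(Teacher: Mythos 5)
Your proof is correct and coincides with the argument the paper invokes, namely the double diagonal extraction of \cite[Lemma 3.2]{DL} applied to the cumulative mass functions $\phi_{n}(k)=\int_{B_{k}(0)}|u_{n}|^{\tau}\,dx$, with the required $L^{\tau}$-boundedness supplied by Lemma \ref{embedding} (the diamagnetic inequality \eqref{eqDI} being what underlies that embedding). You also correctly pinpoint the only subtle step: the refined second extraction forcing $\phi_{n_{j}}(j)\to a_{\infty}$ is indispensable, since the first extraction alone only yields $\liminf_{j}\phi_{n_{j}}(j)\geq a_{\infty}$ and would not control the outer ball $B_{j}(0)$.
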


We conclude this section giving some properties on the nonlinearity that will be useful in the proofs of our results.

\begin{Lem}
\label{propf}
The nonlinearity satisfies the following properties:
\begin{enumerate}[label=(\roman*),ref=\roman*]
	\item \label{propf1} for every $\xi >0$ there exists $C_{\xi}>0$ such that for all $t\in\R$,
	$$\frac{\theta}{2}F(t^{2}) \leq f(t^2)t^2\leq \xi t^{2} + C_{\xi} |t|^{q};$$
	\item \label{propf2} there exist $C_1,C_2>0$ such that for all $t\in\R$, $F(t^{2})\geq C_{1}|t|^{\vartheta} - C_{2}$;
	\item \label{propf3} if $u_{n_j}\rightharpoonup u$ in $\h$ and $\hat{u}_j$ is defined as in \eqref{eqtruncation} we have that
	$$
	\int_{\R^{N}} F(|u_{n_j}|^{2})-F(|u_{n_j}-\hat{u}_{j}|^{2})-F(|\hat{u}_{j}|^{2}) dx=o_{j}(1)
	\quad
	\hbox{as }j\to\infty;
	$$
	\item \label{propf4} if $(u_n)\subset\h$ is bounded, $(u_{n_j})$ a subsequence as in Lemma \ref{vanishing} such that $u_{n_j}\rightharpoonup u$ in $\h$ and $\hat{u}_j$ is defined as in \eqref{eqtruncation} we have that
	\[
\int_{\R^{N}} [f(|u_{n_{j}}|^{2})u_{n_{j}}-f(|u_{n_j}-\hat{u}_{j}|^{2})(u_{n_j}-\hat{u}_{j})-f(|\hat{u}_{j}|^{2})\hat{u}_{j}] \phi dx \to 0
\quad
\hbox{as }j\to\infty
\]
uniformly with respect to $\phi\in \h$ with $\|\phi\|_{\e}\leq 1$.
\end{enumerate}
\end{Lem}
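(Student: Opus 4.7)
My plan is to handle \eqref{propf1} and \eqref{propf2} by direct manipulations of the growth hypotheses, and to prove \eqref{propf3} and \eqref{propf4} as magnetic-setting variants of the Brezis--Lieb splitting in which the weak limit $u$ has been replaced by the compactly supported truncations $\hat{u}_j$, the extra error being absorbed via Lemma~\ref{truncation}.

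For \eqref{propf1}, the left inequality is $(f_4)$ with $t$ replaced by $t^{2}$. For the right one, given $\xi>0$, $(f_2)$ yields $\delta>0$ with $f(s)\le\xi$ for $s\in[0,\delta]$, $(f_3)$ yields $R>\delta$ with $f(s)\le s^{(q-2)/2}$ for $s\ge R$, and continuity on $[\delta,R]$ gives $f(s)\le C_\xi s^{(q-2)/2}$ there. Setting $s=t^{2}$ and multiplying by $t^{2}$ produces the claim. For \eqref{propf2}, $(f_4)$ implies that $t\mapsto F(t)/t^{\theta/2}$ is non-decreasing on $(0,\infty)$, so $F(t^{2})\ge F(1)|t|^{\theta}$ for $|t|\ge 1$; since $F\ge 0$ by $(f_1)$ and $(f_4)$, the missing range $|t|<1$ is absorbed into a constant $C_2$, giving $F(t^{2})\ge C_{1}|t|^{\vartheta}-C_{2}$ with $\vartheta=\theta$.

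For \eqref{propf3}, I reduce to the classical Brezis--Lieb Lemma. Up to a subsequence, the local compact embedding Lemma~\ref{embedding} together with a diagonal extraction gives $u_{n_j}\to u$ a.e.\ in $\R^{N}$; since $(u_{n_j})$ is bounded in $L^{2}\cap L^{q}$ and $|F(|z|^{2})|\le C(|z|^{2}+|z|^{q})$ by \eqref{propf1}, the classical Brezis--Lieb Lemma yields
\[
\int_{\R^{N}}\bigl[F(|u_{n_j}|^{2})-F(|u_{n_j}-u|^{2})-F(|u|^{2})\bigr]dx\to 0.
\]
Lemma~\ref{truncation} gives $\hat{u}_j\to u$ in $\h$, hence in $L^{2}\cap L^{q}$ via Lemma~\ref{embedding}. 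Combining the mean-value estimate $|F(|a|^{2})-F(|b|^{2})|\le C(|a|+|b|+|a|^{q-1}+|b|^{q-1})|a-b|$, itself a consequence of \eqref{propf1}, with H\"older's inequality, the two remaining differences $\int|F(|\hat{u}_j|^{2})-F(|u|^{2})|dx$ and $\int|F(|u_{n_j}-\hat{u}_j|^{2})-F(|u_{n_j}-u|^{2})|dx$ are bounded by a constant times $\|\hat{u}_j-u\|_{L^{2}}+\|\hat{u}_j-u\|_{L^{q}}\to 0$.

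For \eqref{propf4}, the uniformity in $\phi$ with $\|\phi\|_{\e}\le 1$ is the delicate issue. Setting $G_j:=f(|u_{n_j}|^{2})u_{n_j}-f(|u_{n_j}-\hat{u}_j|^{2})(u_{n_j}-\hat{u}_j)-f(|\hat{u}_j|^{2})\hat{u}_j$, the continuous embedding $\|\phi\|_{L^{2}}+\|\phi\|_{L^{q}}\le C\|\phi\|_{\e}$ from Lemma~\ref{embedding} reduces the claim to $G_j\to 0$ in $L^{2}(\R^{N},\C)+L^{q'}(\R^{N},\C)$. The comparison terms between $\hat{u}_j$ and $u$ are handled as in \eqref{propf3} via the vector inequality $|f(|a|^{2})a-f(|b|^{2})b|\le C(|a|+|b|+|a|^{q-1}+|b|^{q-1})|a-b|$. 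For the core Brezis--Lieb term involving $u_{n_j}-u$, I would split $\R^{N}=B_r(0)\cup B_r(0)^{c}$. On $B_r(0)$ the compact embedding Lemma~\ref{embedding} yields $u_{n_j}\to u$ in $L^{r}(B_r(0))$ for $r\in[1,2_s^{*})$ and a.e., so Vitali's theorem (with the uniform $L^{2}\cap L^{q}$-bound coming from \eqref{propf1}) drives the integrand to $0$ in $L^{2}(B_r(0))+L^{q'}(B_r(0))$. The main obstacle is the tail on $B_r(0)^{c}$, where no compactness is available: here Lemma~\ref{vanishing} is essential, providing $r_\sigma$ with $\int_{B_j(0)\setminus B_r(0)}|u_{n_j}|^{\tau}dx\le\sigma$ for $\tau\in\{2,q\}$ and $r\ge r_\sigma$; the growth of $f$ from \eqref{propf1} and H\"older then bound the tail in $L^{2}+L^{q'}$ by $C(\sigma^{1/2}+\sigma^{1/q})$. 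Letting $j\to\infty$ before $\sigma\to 0$ closes the argument.
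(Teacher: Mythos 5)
Parts (\ref{propf1}), (\ref{propf2}), (\ref{propf3}) are correct. For (\ref{propf3}) you take a route that differs from the paper's: you first pass to the classical Brezis--Lieb lemma for the pair $(u_{n_j},u)$, then convert $u$ into $\hat u_j$ using the mean-value estimate $|F(|a|^{2})-F(|b|^{2})|\le C(|a|+|b|+|a|^{q-1}+|b|^{q-1})|a-b|$ and $\hat u_j\to u$ in $L^{2}\cap L^{q}$. The paper instead works directly with the triple $(u_{n_j},u_{n_j}-\hat u_j,\hat u_j)$ and reproves the Brezis--Lieb-type convergence from scratch via the $G_j^\xi$ truncation trick. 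Both are valid; yours is a bit more modular, while the paper's is self-contained and carries over verbatim to (\ref{propf4}). Note only that this $F$-estimate is legitimate precisely because $\frac{d}{dt}F(|b+t(a-b)|^{2})$ involves only $f$, not $f'$.

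Your sketch of (\ref{propf4}) has a genuine gap, and it comes from trying to replicate the same $u\leftrightarrow\hat u_j$ swap at the level of $f$. The inequality $|f(|a|^{2})a-f(|b|^{2})b|\le C(|a|+|b|+|a|^{q-1}+|b|^{q-1})|a-b|$ is \emph{not} a consequence of (\ref{propf1}): unlike the $F$-estimate, it would require control of $\partial_c\bigl(f(|c|^{2})c\bigr)$, i.e.\ an \emph{upper} bound on $f'$, whereas (\ref{f5}) only gives a lower bound and nothing in (\ref{f1})--(\ref{f5}) prevents $f'$ from blowing up. So the ``comparison terms between $\hat u_j$ and $u$'' cannot be dispatched the way you propose. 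There is a second, structural problem: once you replace $\hat u_j$ by $u$ to form the ``core term'' $f(|u_{n_j}|^{2})u_{n_j}-f(|u_{n_j}-u|^{2})(u_{n_j}-u)-f(|u|^{2})u$, the integrand no longer vanishes on $B_j^c(0)$. Lemma \ref{vanishing} only controls $\limsup_j\int_{B_j(0)\setminus B_r(0)}|u_{n_j}|^{\tau}\,dx$, not $\int_{B_r^c(0)}|u_{n_j}|^{\tau}\,dx$; with $\hat u_j$ gone, nothing stops mass of $u_{n_j}$ from escaping past the moving radius $j$, so the tail estimate you invoke does not close. The paper sidesteps both difficulties by keeping $\hat u_j$ throughout: on $B_r(0)$ the difference is handled by a.e.\ convergence plus domination (which uses only \emph{continuity} of $f$, no Lipschitz bound), and on $B_r^c(0)$ the identity $\hat u_j\equiv 0$ outside $B_j(0)$ collapses the integral to $B_j(0)\setminus B_r(0)$, where Lemma \ref{vanishing} applies to the $u_{n_j}$ terms and $\hat u_j\to u$ in $L^{2}\cap L^{q}$ handles the remaining terms. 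To repair your argument, drop the swap to $u$ in (\ref{propf4}) and run the $B_r$/$B_r^c$ split directly on $G_j$.
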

\begin{proof}
Properties (\ref{propf1}) and (\ref{propf2}) are easy consequences of (\ref{f2}), (\ref{f3}) and (\ref{f4}). \\
Let us prove (\ref{propf3}). Recalling that $\hat{u}_{j}=\varphi_{j}u$ with $\varphi_{j}\in [0, 1]$, (\ref{propf1}) in Lemma \ref{propf}, and using the Young inequality we can see that
\begin{align*}
|F(|u_{n_j}|^{2})-F(|u_{n_j}-\hat{u}_{j}|^{2})|
&\leq
2 \int_{0}^{1}  |f(|u_{n_j}-t\hat{u}_{j}|^{2})| |u_{j}-t\hat{u}_{j}| |\hat{u}_{j}| dt\\
&\leq
C \left[(|u_{n_j}|+|u|) |u|+(|u_{n_j}|+|u|)^{q-1}|u|\right]\\
&\leq
\xi (|u_{n_j}|^{2}+|u_{n_j}|^{q})+C(|u|^{2}+|u|^{q})
\end{align*}
for any $\xi>0$. Then
\[
|F(|u_{n_j}|^{2})-F(|u_{n_j}-\hat{u}_{j}|^{2})-F(|\hat{u}_{j}|^{2})|
\leq
\xi (|u_{n_j}|^{2}+|u_{n_j}|^{q})+C(|u|^{2}+|u|^{q})
\]
Now let
$$
G_{j}^\xi:=\max\left\{|F(|u_{n_j}|^{2})-F(|u_{n_j}-\hat{u}_{j}|^{2})-F(|\hat{u}_{j}|^{2})|-\xi(|u_{n_j}|^{2}+|u_{n_j}|^{q}), 0\right\}.
$$
Note that $G_{j}^\xi\rightarrow 0$ as $j\rightarrow \infty$ a.e. in $\R^N$ and $0\leq G_{j}^\xi\leq C(|u|^{2}+|u|^{q})\in L^{1}(\R^{N}, \R)$. Thus, applying the Dominated Convergence Theorem, we deduce that
$$
\int_{\R^{N}} G_{j}^\xi dx\rightarrow 0
\quad
\hbox{as }j\to\infty.
$$
On the other hand, from the definition of $G_{j}^\xi$, 
$$
|F(|u_{n_j}|^{2})-F(|u_{n_j}-\hat{u}_{j}|^{2})-F(|\hat{u}_{j}|^{2})|\leq \xi(|u_{j}|^{2}+|u_{j}|^{2^{*}_{s}})+G_{j}^\xi.
$$
Hence, since $(u_{n_j})$ is bounded in $\h$, we have
$$
\limsup_{j}\int_{\R^{N}} |F(|u_{n_j}|^{2})-F(|u_{n_j}-\hat{u}_{j}|^{2})-F(|\hat{u}_{j}|^{2})|\leq C\xi
$$
and, from the arbitrariness of $\xi$, we conclude.\\
To prove (\ref{propf4}), let us consider $\phi\in \h$ such that $\|\phi\|_{\e}\leq 1$ and $\sigma>0$. Note that, for any $r\geq\max\{r_{\sigma,2},r_{\sigma,q}\}$, where $r_{\sigma,\tau}$ has been introduced in Lemma \ref{DL},
\begin{equation*}
\begin{split}
&\left|\int_{\R^{N}} [f(|u_{n_{j}}|^{2})u_{n_{j}}-f(|u_{n_j}-\hat{u}_{j}|^{2})(u_{n_j}-\hat{u}_{j})-f(|\hat{u}_{j}|^{2})\hat{u}_{j}] \phi dx \right|\\
&\qquad \leq \int_{B_{r}(0)} |f(|u_{n_{j}}|^{2})u_{n_{j}}-f(|v_{j}|^{2})v_{j}-f(|\hat{u}_{j}|^{2})\hat{u}_{j}| |\phi| dx\\
&\qquad + \int_{B_{r}^c(0)} |f(|u_{n_{j}}|^{2})u_{n_{j}}-f(|v_{j}|^{2})v_{j}-f(|\hat{u}_{j}|^{2})\hat{u}_{j}| |\phi| dx\\
&\qquad =:D_{j}+E_{j}.
\end{split}
\end{equation*}
Taking into account Lemma \ref{embedding} and Lemma \ref{truncation}, we can apply the Dominated Convergence Theorem to obtain that $D_{j}\rightarrow 0 \mbox{ uniformly in } \phi\in \h \hbox{ with } \|\phi\|_{\e}\leq 1$.\\
On the other hand, recalling that (\ref{propf1}) in Lemma \ref{propf} and that $\hat{u}_{j}=0$ in $B_{j}^c(0)$ for any $j\geq 1$, we deduce that, for $j$ large enough,
\begin{equation*}
\begin{split}
E_{j}
&=\int_{B_{j}(0)\setminus B_{r}(0)} |f(|u_{n_{j}}|^{2})u_{n_{j}}-f(|u_{n_{j}}-\hat{u}_{j}|^{2})(u_{n_{j}}-\hat{u}_{j})-f(|\hat{u}_{j}|^{2})\hat{u}_{j}| |\phi| dx \\
&\leq C \int_{B_{j}(0)\setminus B_{r}(0)}(|u_{n_{j}}|+|\hat{u}_{j}|+|u_{n_{j}}|^{q-1}+|\hat{u}_{j}|^{q-1})|\phi| dx.
\end{split}
\end{equation*}
Since $\|\phi\|_{\e}\leq 1$, using also the H\"older inequality and Lemma \ref{embedding}, we get
\begin{equation*}
\int_{B_{j}(0)\setminus B_{r}(0)}(|u_{n_{j}}|+|u_{n_{j}}|^{q-1}) |\phi| dx 
\leq
C \left[
\left(\int_{B_{j}(0)\setminus B_{r}(0)} |u_{n_{j}}|^{2}dx \right)^{\frac{1}{2}}
+ \left(\int_{B_{j}(0)\setminus B_{r}(0)} |u_{n_{j}}|^{q}dx \right)^{\frac{q-1}{q}} 
\right]
\end{equation*}
and so, by Lemma \ref{DL},
\begin{equation*}
\limsup_{j}  \int_{B_{j}(0)\setminus B_{r}(0)}(|u_{n_{j}}|+|u_{n_{j}}|^{q-1}) |\phi| dx 
\leq C(\sigma^{\frac{1}{2}}+\sigma^{\frac{q-1}{q}}).
\end{equation*}
Moreover, note that from Lemma \ref{embedding} and Lemma \ref{truncation}, we know that $\hat{u}_{j}\rightarrow u$ in $L^{2}(\R^{N}, \C)\cap L^{q}(\R^{N}, \C)$ as $j\rightarrow \infty$.
This and H\"older inequality give
\begin{equation*}
\limsup_{j}  \int_{B_{j}(0)\setminus B_{r}(0)}(|\hat{u}_{j}|+|\hat{u}_{j}|^{q-1}) |\phi| dx= \int_{B_{r}^c(0)} (|u|+|u|^{q-1})|\phi| dx \leq C(\sigma^{\frac{1}{2}}+\sigma^{\frac{q-1}{q}})
\end{equation*}
for $r$ large enough.
Thus the arbitrariness of $\sigma>0$ yields $E_{j}\rightarrow 0$ as $ j\rightarrow \infty$ uniformly with respect to $\phi$, $\|\phi\|_{\e}\leq 1$ and we conclude.
\end{proof}

\section{A first existence result}\label{sec3}

The goal of this section is to prove Theorem \ref{thex}.

We want to find solutions of \eqref{Pe} in the sense of the following definition.
\begin{Def}
	We say that $u\in H^{s}_{\e}$ is a weak solution to \eqref{Pe} if for any $v\in H^{s}_{\e}$
	\begin{align*}
	& \Re \Bigl(\frac{c_{N,s}}{2} \iint_{\R^{2N}} \frac{(u(x)-e^{\imath (x-y)\cdot A_{\e}(\frac{x+y}{2})} u(y)) \overline{(v(x)-e^{\imath (x-y)\cdot A_{\e}(\frac{x+y}{2})} v(y))}}{|x-y|^{N+2s}} \, dxdy \\
	&\qquad +\int_{\R^{N}} V(\e x) u \bar{v}\, dx-\int_{\R^{N}} f(|u|^{2})u\bar{v}\, dx\Bigr)=0.
	\end{align*}
\end{Def}

Such solutions can be found as critical points of the  functional $J_{\e}: \h\rightarrow \R$ defined as
\[
J_{\e}(u)
=\frac{1}{2} \|u\|^{2}_{\e}-\frac{1}{2} \int_{\R^{N}} F(|u|^{2})\, dx.
\]
Using Lemma \ref{embedding} and Lemma \ref{propf}, we can get  that $J_{\e}$ is well-defined and that $J_{\e}\in C^{1}(\h, \R)$.

Let us show that for any $\e>0$ the functional $J_\eps$ satisfies the geometrical assumptions of the Mountain Pass Theorem.

\begin{Lem}\label{MPG}
	The functional $J_{\e}$ satisfies the following conditions:
	\begin{enumerate}[label=(\roman*),ref=\roman*]
		\item \label{iMPG} there exist $\alpha, \rho >0$ such that $J_{\e}(u)\geq \alpha$ with $\|u\|_{\e}=\rho$; 
		\item \label{iiMPG}there exists $e\in \h\setminus B_{\rho}(0)$ such that $J_{\e}(e)<0$. 
	\end{enumerate}
\end{Lem}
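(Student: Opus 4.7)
The argument is the standard mountain pass geometry check, transferred to the magnetic setting; the key inputs are Lemma \ref{embedding} (for the continuous embeddings $H^s_\eps \hookrightarrow L^r(\R^N,\C)$ with $r\in[2,2^*_s]$) and the growth properties of $f$ collected in Lemma \ref{propf}.

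For part (\ref{iMPG}), I would start from the expression
\[
J_\eps(u) = \frac{1}{2}\|u\|_\eps^2 - \frac{1}{2}\int_{\R^N} F(|u|^2)\, dx
\]
and control the nonlinear term via (\ref{propf1}) of Lemma \ref{propf}, which upon integrating yields, for every $\xi>0$,
\[
\int_{\R^N} F(|u|^2)\, dx \leq \xi |u|_2^2 + C_\xi |u|_q^q.
\]
By Lemma \ref{embedding}, $|u|_2 \leq C\|u\|_\eps$ and $|u|_q \leq C\|u\|_\eps$, so choosing $\xi$ small enough absorbs the $L^2$ term into $\frac12\|u\|_\eps^2$ and leaves an estimate of the form
\[
J_\eps(u) \geq c_1\|u\|_\eps^2 - c_2\|u\|_\eps^q.
\]
Since $q>2$, taking $\rho>0$ sufficiently small produces $\alpha=J_\eps(u)\big|_{\|u\|_\eps=\rho}>0$, independent of $u$ on the sphere.

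For part (\ref{iiMPG}), I would use the Ambrosetti--Rabinowitz-type condition (\ref{f4}) in the form of (\ref{propf2}) of Lemma \ref{propf}: $F(t^2)\geq C_1 |t|^\theta - C_2$. Since $\theta>2$, this forces the functional to go to $-\infty$ along a suitable ray. Concretely, I would pick a fixed nontrivial real function $u_0\in C_c^\infty(\R^N,\R)$ with $u_0\geq 0$ and $u_0\not\equiv 0$; by Lemma \ref{aux} (or, equivalently, by the density of $C_c^\infty(\R^N,\C)$ in $H^s_\eps$ established just above), multiplying by $e^{\imath A(0)\cdot x}$ we obtain $w_0:=e^{\imath A(0)\cdot x}u_0 \in H^s_\eps$ with $|w_0|=u_0$. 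Then, for every $t>0$,
\[
J_\eps(t w_0) \leq \frac{t^2}{2}\|w_0\|_\eps^2 - \frac{C_1 t^\theta}{2}\int_{\R^N}|u_0|^\theta\, dx + \frac{C_2}{2}|\operatorname{supp}(u_0)|,
\]
which tends to $-\infty$ as $t\to\infty$ because $\theta>2$. Choosing $t$ large enough so that both $\|tw_0\|_\eps>\rho$ and $J_\eps(tw_0)<0$, the element $e:=tw_0$ does the job.

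The proof presents no real obstacle; the only mildly delicate point is the need to produce a test function actually lying in $H^s_\eps$ (because $H^s_\eps$ is a space of complex-valued functions tied to the magnetic phase), which is why I would invoke Lemma \ref{aux} rather than working with a real-valued $u_0$ directly. Everything else is a standard interpolation between a quadratic lower bound and a superquadratic nonlinear term.
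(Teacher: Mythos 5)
Your argument is correct and essentially reproduces the paper's proof: for (\ref{iMPG}) both of you bound $\int F(|u|^2)$ via (\ref{propf1}) and the embedding $H^s_\eps\hookrightarrow L^2\cap L^q$, absorbing the $\xi|u|_2^2$ term (using $\int V(\eps x)|u|^2\ge V_0|u|_2^2$), and for (\ref{iiMPG}) both of you use (\ref{propf2}) to drive $J_\eps$ to $-\infty$ along a ray. The only difference is cosmetic: in (\ref{iiMPG}) the paper simply takes $\varphi\in C^\infty_c(\R^N,\C)$, $\varphi\not\equiv 0$, as the test function (which lies in $H^s_\eps$ by construction of the space), whereas you take a detour through a real-valued $u_0$ and Lemma \ref{aux}; this extra step is unnecessary but harmless.
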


\begin{proof}
	Taking into account (\ref{propf1}) in Lemma \ref{propf}, Lemma \ref{embedding}, and \eqref{condV}, for $\xi<V_{0}$ we get
	\[
	J_{\e}(u) 
	\geq \frac{1}{2} [u]_{A_\eps}^2
	+\frac{1}{2} \left(1-\frac{\xi}{V_0}\right)\int_{\R^{N}} V(\e x) |u|^{2} dx
	-\frac{C_{\xi}}{2}\int_{\R^{N}} |u|^{q} dx
	\geq C_1 \|u\|_{\e}^{2} - C_2 \|u\|_{\e}^{q}
	\]
	and then (\ref{iMPG}).\\
	To prove (\ref{iiMPG}), we observe that by (\ref{propf2}) in Lemma \ref{propf}
	and taking $\varphi \in C^{\infty}_{c}(\R^{N}, \C)$ such that $\varphi \not \equiv 0$ we have 
	\begin{equation*}
	J_{\e}(t\varphi)
	\leq
	\frac{t^{2}}{2} \|\varphi\|_{\e}^{2}
	- t^{\vartheta}C_{1}  |\varphi|^{\vartheta}_\vartheta
	+ C_{2}|\supp (\varphi)|\rightarrow -\infty \mbox{ as } t\rightarrow +\infty
	\end{equation*}
	since $\vartheta >2$.
\end{proof}

By the Ekeland Variational Principle there exists a $(PS)_{c_{\e}}$ sequence $(u_{n})\subset \h$, that is 
\begin{equation}\label{PSc}
J_{\e}(u_{n})\rightarrow c_{\e} \quad \mbox{ and }\quad J'_{\e}(u_{n})\rightarrow 0, 
\end{equation}
where $c_{\e}$ is the minimax level of the Mountain Pass Theorem, namely
\[
c_\eps:= \inf_{\gamma\in\Gamma} \max_{t\in [0,1]} J_\eps(\gamma(t))
\]
with $\Gamma:=\{\gamma\in H([0,1],H_\eps^s) : \gamma(0)=0,J_\eps(\gamma(1))<0\}$.
\\
Let us observe that $(u_{n})$ is bounded in $\h$. In fact by using \eqref{PSc} and (\ref{f4}) we can see that
\begin{align*}
c_{\e}+o_{n}(1)\|u_{n}\|_{\e}&=J_{\e}(u_{n})-\frac{1}{\theta}\langle J'_{\e}(u_{n}),u_{n} \rangle \\
&= \left(\frac{1}{2}-\frac{1}{\theta}\right) \|u_{n}\|^{2}_{\e}+\int_{\R^{N}} \left[\frac{1}{\theta} f(|u_{n}|^{2})|u_{n}|^{2}-\frac{1}{2} F(|u_{n}|^{2})\right] dx \\
&\geq \left(\frac{1}{2}-\frac{1}{\theta}\right) \|u_{n}\|^{2}_{\e}.
\end{align*}
Moreover it is standard
to verify the characterization
\begin{equation*}
c_{\e}= \inf_{u\in \h\setminus \{0\}} \sup_{t\geq 0} J_{\e}(tu) = \inf_{u\in \N_{\e}} J_{\e}(u), 
\end{equation*}
where
$$
\N_{\e}:=\{u\in \h\setminus\{0\}: \langle J'_{\e}(u),u\rangle=0\}
$$
is the usually Nehari manifold associated to $J_\eps$. \\
The following properties hold.
\begin{Lem}\label{LemNeharyE}
We have:
\begin{enumerate}[label=(\roman*),ref=\roman*]
	\item \label{2.1} there exists $K>0$ such that,  for all $u\in \N_{\e}$, $\|u\|_{\e}\geq K$;
	\item \label{2.2} for any $u\in \h\setminus \{0\}$ there exists a unique $t_{0}= t_{0}(u)$ such that $J_{\e}(t_{0}u)= \max_{t\geq 0} J_{\e}(tu)$ and then $t_{0}u\in \N_{\e}$. 
\end{enumerate}
\end{Lem}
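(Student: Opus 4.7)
The plan is to handle the two items separately, using standard Nehari-manifold bookkeeping together with the structural assumptions on $f$.

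For (\ref{2.1}), I would start from the identity $\|u\|_\eps^2 = \int_{\R^N} f(|u|^2)|u|^2\,dx$ which holds for every $u\in\N_\eps$. Apply property (\ref{propf1}) of Lemma \ref{propf} with some $\xi\in(0,V_0)$ to bound the right-hand side by $\xi|u|_2^2 + C_\xi |u|_q^q$. Using $V_\eps\geq V_0$ to absorb the $L^2$-term into $\|u\|_\eps^2$, and then Lemma \ref{embedding} to dominate $|u|_q^q$ by a multiple of $\|u\|_\eps^q$, I obtain
\[
\left(1-\tfrac{\xi}{V_0}\right)\|u\|_\eps^2 \leq C\,\|u\|_\eps^q.
\]
Since $q>2$, dividing by $\|u\|_\eps^2$ (possible because $u\neq 0$) yields a uniform positive lower bound $K$ for $\|u\|_\eps$.

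For (\ref{2.2}), fix $u\in\h\setminus\{0\}$ and set $h(t):=J_\eps(tu)$ for $t\geq 0$. By Lemma \ref{MPG} applied along the ray $\{tu:t\geq 0\}$, or by the same direct estimates used in its proof, $h(0)=0$, $h(t)>0$ for small $t>0$, and $h(t)\to-\infty$ as $t\to+\infty$ (this last part uses (\ref{propf2}) of Lemma \ref{propf}). Consequently $h$ admits a global maximum at some $t_0>0$, and at such a critical point $h'(t_0)=0$, which is exactly the Nehari condition $\langle J'_\eps(t_0 u),t_0 u\rangle = 0$, so $t_0 u\in\N_\eps$.

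The delicate point is uniqueness. Writing $h'(t)=t\left[\|u\|_\eps^2 - \psi(t)\right]$ with $\psi(t):=\int_{\R^N} f(t^2|u|^2)|u|^2\,dx$, uniqueness of the positive critical point is equivalent to strict monotonicity of $\psi$ on $(0,\infty)$. Differentiating under the integral sign gives $\psi'(t)=2t\int_{\R^N} f'(t^2|u|^2)|u|^4\,dx$, which is strictly positive by hypothesis (\ref{f5}) (since $f'(t)\geq C_\sigma t^{(\sigma-4)/2}>0$ for $t>0$). Together with $\psi(0^+)=0$ (from (\ref{f2})) and $\psi(t)\to+\infty$ as $t\to+\infty$ (which follows from (\ref{f4}), since $f(s)s\geq\tfrac{\theta}{2}F(s)\geq C s^{\theta/2}$ for large $s$, giving a lower bound of order $t^{\theta-2}$ with $\theta>2$), the intermediate-value theorem produces a \emph{unique} $t_0>0$ solving $\psi(t_0)=\|u\|_\eps^2$. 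I expect the only non-routine step to be justifying differentiation under the integral defining $\psi$, which is handled by a dominated-convergence argument using the growth bound in (\ref{propf1}) together with the embedding $\h\hookrightarrow L^2\cap L^q$ from Lemma \ref{embedding}.
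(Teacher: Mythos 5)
Your proof of part (i) and the existence part of (ii) coincide with the paper's argument. The uniqueness step in (ii) is where you diverge, and there is a small issue there. The paper obtains uniqueness without differentiating: if $0<t_1<t_2$ are both critical points of $h(t)=J_\eps(tu)$, then
\[
\|u\|_\eps^2=\int_{\R^N} f(t_1^2|u|^2)|u|^2\,dx=\int_{\R^N} f(t_2^2|u|^2)|u|^2\,dx,
\]
which is impossible because $(\ref{f5})$ forces $f$ to be strictly increasing on $(0,\infty)$, hence $f(t_1^2|u|^2)<f(t_2^2|u|^2)$ on the positive-measure set where $u\neq 0$. You instead differentiate $\psi(t)=\int f(t^2|u|^2)|u|^2\,dx$ under the integral sign to get $\psi'>0$. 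That reaches the same conclusion, but the justification you propose does not go through as stated: dominated convergence for $\psi'$ requires a local integrable majorant for $f'(t^2|u|^2)|u|^4$, i.e.\ an \emph{upper} growth bound on $f'$, whereas $(\ref{propf1})$ only controls $f(t^2)t^2$ and $(\ref{f5})$ is only a lower bound on $f'$. The hypotheses $(\ref{f1})$--$(\ref{f5})$ by themselves do not supply the needed domination. The fix is simply to drop the differentiation and argue, as the paper does, directly from strict monotonicity of $f$: for $0<t_1<t_2$ one has $\psi(t_1)<\psi(t_2)$ outright, so $\psi$ is strictly increasing and the critical point of $h$ is unique. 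Everything else in your proposal is correct and matches the paper.
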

\begin{proof}
Property (\ref{2.1}) follows easily from (\ref{propf1}) in Lemma \ref{propf} and Lemma \ref{embedding}, since, if $u\in \N_{\e}$, then, for all $\xi >0$
\[
\|u\|_\eps^2 = \int_{\R^{N}} f(|u|^{2})|u|^{2} dx \leq \xi \|u\|_\eps^2 + C \| u\|_\eps^q.
\]
To prove (\ref{2.2}), let us fix $u\in \h\setminus \{0\}$ and consider the smooth function $h(t):=J_\eps(tu)$ for $t\geq 0$. Arguing as in Lemma \ref{MPG} we can get that
\[
J_{\e}(tu) 
\geq C_1 t^2 \|u\|_{\e}^{2} - C_2 t^q \|u\|_{\e}^{q}
\]
and
\begin{equation*}
J_{\e}(tu)
\leq
\frac{t^{2}}{2} \|u\|_{\e}^{2}
- t^{\vartheta}C_{1}  \int_{\Omega}|u|^\vartheta dx
+ C_{2}|\Omega|\rightarrow -\infty \mbox{ as } t\rightarrow +\infty,
\end{equation*}
where $\Omega$ is a compact subset of $\operatorname{supp}(u)$ with $|\Omega|>0$. Then
there exists a maximum point of $h$. To prove the uniqueness, let $0<t_1<t_2$ be two maximum points of $h$. Since $h'(t_1)=h'(t_2)=0$, then
\[
\| u\|_\eps^2 = \int_{\R^{N}} f(|t_1 u|^{2})|u|^{2} dx = \int_{\R^{N}} f(|t_2 u|^{2})|u|^{2} dx
\]
which is in contradiction with the strict increasing of $f$ assumed in (\ref{f5}).
\end{proof}

To prove the compactness of the $(PS)_d$ sequences, for suitable $d\in\R$, we will use the following preliminary result.

\begin{Lem}\label{compactness}
	Let $d\in\R$ and $(u_{n})\subset \h$ be a $(PS)_{d}$ sequence for $J_{\e}$ such that $u_{n}\rightharpoonup 0$ in $\h$. Then, one of the following alternatives occurs:
	\begin{enumerate}[label=(\alph*),ref=\alph*]
		\item \label{compactnessa} $u_{n}\rightarrow 0$ in $\h$; 
		\item \label{compactnessb} there are a sequence $\{y_{n}\}\subset \R^{N}$ and constants $R, \beta>0$ such that 
		\begin{equation*}
		\liminf_{n} \int_{B_{R}(y_{n})} |u_{n}|^{2} dx \geq \beta >0. 
		\end{equation*}
	\end{enumerate}
\end{Lem}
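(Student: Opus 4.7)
The plan is to prove the standard dichotomy by assuming that alternative (\ref{compactnessb}) fails and deducing (\ref{compactnessa}). The failure of (\ref{compactnessb}) should be read as a vanishing statement in the sense of Lions that, once transferred to $|u_n|$ via the diamagnetic inequality \eqref{eqDI}, lets me apply Lemma \ref{Lions}; the nonlinear terms in the energy are then killed by the growth control in Lemma \ref{propf}(\ref{propf1}), and finally the Palais--Smale condition $\langle J_\eps'(u_n),u_n\rangle\to 0$ forces $\|u_n\|_\eps\to 0$.

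More precisely, suppose (\ref{compactnessb}) does not hold. Then for every $R>0$ one has
\[
\lim_{n\to\infty} \sup_{y\in\R^N}\int_{B_R(y)}|u_n|^2\,dx = 0,
\]
since otherwise, passing to a subsequence and choosing near-maximizers $y_n$, we would contradict the negation of (\ref{compactnessb}). The sequence $(u_n)$ is bounded in $H^s_\eps$ (being a $(PS)_d$ sequence, as computed right after \eqref{PSc}), so by \eqref{eqDI} the real sequence $(|u_n|)$ is bounded in $H^s(\R^N,\R)$, and the vanishing above is exactly the hypothesis of Lemma \ref{Lions} applied to $(|u_n|)$. Hence $|u_n|\to 0$ in $L^r(\R^N,\R)$ for every $r\in(2,2_s^*)$, and consequently $u_n\to 0$ in $L^r(\R^N,\C)$ for the same range of $r$.

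Next, using Lemma \ref{propf}(\ref{propf1}) with an arbitrary $\xi>0$ and the exponent $q\in(2,2_s^*)$ provided by (\ref{f3}),
\[
\int_{\R^N} f(|u_n|^2)|u_n|^2\,dx \leq \xi\int_{\R^N}|u_n|^2\,dx + C_\xi\int_{\R^N}|u_n|^q\,dx.
\]
Since $V_\eps\geq V_0>0$, the first integral on the right is uniformly bounded by a constant times $\|u_n\|_\eps^2$, while the second tends to zero by the previous step. The arbitrariness of $\xi$ gives
\[
\int_{\R^N} f(|u_n|^2)|u_n|^2\,dx \longrightarrow 0.
\]
Testing $J_\eps'(u_n)\to 0$ against $u_n$ itself yields
\[
\|u_n\|_\eps^2 = \langle J_\eps'(u_n),u_n\rangle + \int_{\R^N} f(|u_n|^2)|u_n|^2\,dx = o_n(1),
\]
so $u_n\to 0$ in $H^s_\eps$ and alternative (\ref{compactnessa}) holds.

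The only subtle point is the transfer of the vanishing condition from $u_n$ to $|u_n|$: it relies crucially on the diamagnetic inequality \eqref{eqDI}, which allows us to apply the real-valued Lions lemma. Once this step is in place, the rest is just the standard growth-control argument and is routine.
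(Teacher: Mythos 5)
Your proof is correct and follows essentially the same approach as the paper: negate alternative (\ref{compactnessb}), transfer the vanishing to $|u_n|$ via the diamagnetic inequality \eqref{eqDI}, apply Lemma \ref{Lions} to get $|u_n|_q\to 0$, control $\int f(|u_n|^2)|u_n|^2\,dx$ by Lemma \ref{propf}(\ref{propf1}), and conclude $\|u_n\|_\eps\to 0$ from the Palais--Smale condition. The only cosmetic difference is that you first show $\int f(|u_n|^2)|u_n|^2\,dx\to 0$ and then substitute into the (PS) identity, whereas the paper absorbs the $\xi$-term into $\|u_n\|_\eps^2$ directly; both yield the same conclusion.
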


\begin{proof}
	Assume that (\ref{compactnessb}) does not hold true. Then, for every $R>0$ such that
	\begin{equation*}
	\lim_{n} \sup_{y\in \R^{N}} \int_{B_{R}(y)} |u_{n}|^{2} dx=0. 
	\end{equation*}
	Since $(u_{n})$ is bounded in $\h$, from \eqref{eqDI} we deduce that $(|u_{n}|)$ is bounded in $H^{s}(\R^{N}, \R)$, so by Lemma \ref{Lions} it follows that $	|u_{n}|_q\rightarrow 0$.\\
	Since, moreover, $(u_{n})$ is also a $(PS)_{d}$ sequence for $J_{\e}$, by (\ref{propf1}) in Lemma \ref{propf} we have that for every $\xi>0$
	\[
	0\leq \|u_{n}\|_{\e}^{2}
	= \int_{\R^{N}} f(|u_{n}|^{2})|u_{n}|^{2} dx + o_{n}(1) 
	\leq \xi |u_{n}|_2^{2} + C_{\xi} |u_{n}|_q^{q} + o_{n}(1)
	\leq \frac{\xi}{V_0} \|u_{n}\|_\eps^{2} + C_{\xi} |u_{n}|_q^{q} + o_{n}(1).
	\]
	Thus, for $\xi$ small enough, we get (\ref{compactnessa}).
\end{proof}

Moreover, to develop our arguments, we will need to consider the following family of limit problems associated to \eqref{Pe}
\begin{equation}
\label{Plim}
\tag{$P_{\mu}$}
(-\Delta)^{s} u + \mu u =  f(|u|^{2})u  \mbox{ in } \R^{N},
\end{equation}
with $\mu>0$, whose corresponding $C^1$ functional $I_{V_0}: H^{s}(\R^N,\R)\rightarrow \R$ is given by
\[
I_{\mu}(u)
=\frac{1}{2} \|u\|^{2}_{\mu}-\frac{1}{2} \int_{\R^{N}} F(|u|^{2})\, dx,
\]
where
\[
\|u\|^{2}_{\mu}:=[u]^2+V_{0} |u|_2^{2}.
\]
Even in this case we can define the Nehari manifold 
$$
\mathcal{M}_{\mu}=\{u\in H^{s}(\R^N,\R): \langle I'_{\mu}(u),u\rangle=0\}
$$
and we have that
$$
c_{\mu}
:= \inf_{\gamma\in\Xi_\mu} \max_{t\in [0,1]} I_{\mu}(\gamma(t))
=\inf_{u\in H^{s}(\R^N,\R)\setminus \{0\}} \sup_{t\geq 0} I_{\mu}(t u)
=\inf_{u\in \mathcal{M}_{\mu}} I_{\mu}(u)
$$
with $\Xi_\mu:=\{\gamma\in C([0,1],H^s(\R^N,\R)) : \gamma(0)=0,I_\mu(\gamma(1))<0\}$.\\
We will call {\em ground state} for \eqref{Plim} each minimum of $I_\mu$ in $\mathcal{M}_{\mu}$, wich is also a solution of \eqref{Plim}.

\begin{Rem}
Arguing as in Lemma \ref{LemNeharyE} we can prove that for every fixed $\mu>0$ there exists $K>0$ such that,  for all $u\in \mathcal{M}_{\mu}$, $\|u\|_{\e}\geq K$ and that for any $u\in H^{s}(\R^N,\R)\setminus \{0\}$ there exists a unique $t_{0}= t_{0}(u)$ such that $I_{\mu}(t_{0}u)= \max_{t\geq 0} I_{\mu}(tu)$ and then $t_{0}u\in \mathcal{M}_{\mu}$.
\end{Rem}

Using the same arguments of Lemma \ref{compactness} and arguing as in \cite[Lemma 6]{FigS} we can get
\begin{Lem}\label{lem4.3}
	Let $(w_{n})\subset \mathcal{M}_{\mu}$ be a sequence satisfying $I_{\mu}(w_{n})\rightarrow c_{\mu}$. Then $(w_{n})$ is bounded in $H^{s}(\R^{N},\R)$ and, up to a subsequence, $w_n\rightharpoonup w$ in $H^{s}(\R^{N},\R)$.  If $w\neq 0$, then $w_n\to w\in \mathcal{M}_{\mu}$ in $H^{s}(\R^{N},\R)$ and $w$ is a ground state for \eqref{Plim}.
	If $w=0$, then there exist $(\tilde{y}_{n})\subset \R^{N}$ and $\tilde{w}\in H^{s}(\R^{N},\R)\setminus\{0\}$ such that up to a subsequence $w_{n}(\cdot +\tilde{y}_{n})\to\tilde{w}\in \mathcal{M}_{\mu}$ in $H^{s}(\R^{N},\R)$ and $\tilde{w}$ is a ground state for \eqref{Plim}.
\end{Lem}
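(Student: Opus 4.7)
\medskip

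\noindent\textbf{Proof proposal.}
The plan is to follow the classical minimization-sequence dichotomy on the Nehari manifold, using the Ambrosetti--Rabinowitz condition $(f_4)$ for boundedness, Lions' lemma (Lemma \ref{Lions}) to rule out vanishing, and a Brezis--Lieb-type splitting to upgrade weak to strong convergence.

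First, I would show the boundedness of $(w_n)$. Since $w_n\in\mathcal{M}_\mu$, we have $\langle I'_\mu(w_n),w_n\rangle=0$, hence
\[
c_\mu+o_n(1)=I_\mu(w_n)-\frac{1}{\theta}\langle I'_\mu(w_n),w_n\rangle=\Bigl(\frac{1}{2}-\frac{1}{\theta}\Bigr)\|w_n\|_\mu^2+\int_{\R^N}\Bigl[\frac{1}{\theta}f(|w_n|^2)|w_n|^2-\frac{1}{2}F(|w_n|^2)\Bigr]\,dx\ge\Bigl(\frac{1}{2}-\frac{1}{\theta}\Bigr)\|w_n\|_\mu^2
\]
by $(f_4)$. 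Hence $(w_n)$ is bounded in $H^s(\R^N,\R)$, and up to a subsequence $w_n\rightharpoonup w$ in $H^s$ and $w_n\to w$ a.e.\ in $\R^N$. Ekeland's variational principle applied to the $C^1$-manifold $\mathcal{M}_\mu$ (whose regularity follows from $(f_5)$, which implies the strict monotonicity needed for the uniqueness of the projection onto $\mathcal{M}_\mu$) yields after modification that $(w_n)$ is a $(PS)_{c_\mu}$ sequence for $I_\mu$; the Lagrange multipliers vanish in the limit because of the uniform bound $\|w_n\|_\mu\ge K>0$ (Remark after Lemma \ref{LemNeharyE}).

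In the case $w\neq 0$, standard passage to the limit in $\langle I'_\mu(w_n),\varphi\rangle\to 0$ for $\varphi\in C^\infty_c(\R^N,\R)$ gives $I'_\mu(w)=0$, so $w\in\mathcal{M}_\mu$ and thus $I_\mu(w)\ge c_\mu$. On the other hand, on $\mathcal{M}_\mu$ we have the representation
\[
I_\mu(u)=\frac{1}{2}\int_{\R^N}\bigl[f(|u|^2)|u|^2-F(|u|^2)\bigr]\,dx,
\]
whose integrand is nonnegative by $(f_4)$, so Fatou's lemma gives $I_\mu(w)\le\liminf_n I_\mu(w_n)=c_\mu$. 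Hence $I_\mu(w)=c_\mu$ and $w$ is a ground state. For strong convergence, set $z_n:=w_n-w$ and apply the Brezis--Lieb lemma together with its nonlinear counterpart (variant of Lemma \ref{propf}\eqref{propf3}--\eqref{propf4} in the real-valued translation-invariant setting) to obtain $I_\mu(z_n)\to c_\mu-I_\mu(w)=0$ and $\langle I'_\mu(z_n),z_n\rangle\to 0$; then the computation in the first paragraph, applied to $z_n$, yields $\|z_n\|_\mu\to 0$.

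In the case $w=0$, I would rule out vanishing exactly as in Lemma \ref{compactness}: if $\sup_{y\in\R^N}\int_{B_R(y)}|w_n|^2\,dx\to 0$, then Lemma \ref{Lions} gives $|w_n|_q\to 0$, and $(f_2)$--$(f_3)$ together with $(\textit{propf1})$ of Lemma \ref{propf} imply $\int_{\R^N}f(|w_n|^2)|w_n|^2\,dx\to 0$. Since $w_n\in\mathcal{M}_\mu$, this forces $\|w_n\|_\mu\to 0$ and therefore $I_\mu(w_n)\to 0$, contradicting $c_\mu>0$. Hence there exist $(\tilde y_n)\subset\R^N$ and $R,\beta>0$ with $\int_{B_R(\tilde y_n)}|w_n|^2\,dx\ge\beta$. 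By translation invariance of the norm $\|\cdot\|_\mu$ and of $I_\mu$ (in sharp contrast to the $\eps$-dependent functional $J_\eps$), the shifted sequence $\tilde w_n:=w_n(\cdot+\tilde y_n)$ still lies in $\mathcal{M}_\mu$ and satisfies $I_\mu(\tilde w_n)\to c_\mu$; by construction the weak limit $\tilde w$ of a subsequence is nonzero, so the previous case applies to $(\tilde w_n)$ and gives $\tilde w_n\to\tilde w\in\mathcal{M}_\mu$ strongly in $H^s(\R^N,\R)$.

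The main obstacle is the strong-convergence step: turning a $(PS)_{c_\mu}$-sequence with nontrivial weak limit into a strongly convergent one requires carefully splitting the functional and its derivative, and handling the nonlinear terms via a pointwise-a.e.\ Brezis--Lieb argument. The Ekeland step (producing a $(PS)$ sequence on the constraint) is standard but depends on $(f_5)$ for the $C^1$-manifold structure; this is the reason the strong monotonicity hypothesis $(f_5)$ cannot be dropped in this argument.
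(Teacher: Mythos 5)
Your proposal is correct and follows essentially the same route the paper intends: the paper gives no detailed proof of Lemma~\ref{lem4.3} but simply refers to the dichotomy argument of Lemma~\ref{compactness} and to \cite[Lemma~6]{FigS}, and your write-up is a faithful unpacking of exactly those ingredients (AR-boundedness, Ekeland to upgrade the Nehari-minimizing sequence to a $(PS)_{c_\mu}$ sequence, Fatou via the Nehari representation of $I_\mu$, Brezis--Lieb splitting for strong convergence, and Lions' lemma plus translation invariance to handle $w=0$).

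Two small remarks. First, the sentence \emph{``$(f_2)$--$(f_3)$ together with $(\textit{propf1})$ imply $\int f(|w_n|^2)|w_n|^2\,dx\to 0$''} is stated a bit loosely: from $(\textit{propf1})$ one only gets $\int f(|w_n|^2)|w_n|^2\le\xi|w_n|_2^2+C_\xi|w_n|_q^q$ with the $L^2$ term not going to zero; the correct conclusion, as you of course intend, is that the Nehari identity $\|w_n\|_\mu^2=\int f(|w_n|^2)|w_n|^2$ then gives $(1-\xi/\mu)\|w_n\|_\mu^2\le C_\xi|w_n|_q^q+o_n(1)$, whence $\|w_n\|_\mu\to0$. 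Second, the Ekeland step can be avoided in the case $w\neq 0$ by a purely Nehari argument: weak lower semicontinuity of $\|\cdot\|_\mu$ and Fatou give $\|w\|_\mu^2\le\int f(w^2)w^2$, so $t_0w\in\mathcal{M}_\mu$ for some $t_0\in(0,1]$; the monotonicity of $t\mapsto f(t)t-F(t)$ from $(f_5)$ then yields $c_\mu\le I_\mu(t_0w)\le\liminf_n I_\mu(w_n)=c_\mu$, forcing $t_0=1$ and $I_\mu(w)=c_\mu$, and the equality in Fatou plus a Brezis--Lieb decomposition of $\int[f(w_n^2)w_n^2-F(w_n^2)]$ then gives $\|w_n\|_\mu\to\|w\|_\mu$. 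This is the route taken in Case~2 of the paper's Lemma~\ref{lem2.3} and is slightly more self-contained than invoking Ekeland, though your version is equally valid.
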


\begin{Rem}\label{remdecay}
In view of \cite[Theorems 1.2 and 3.4]{FQT} we can see that a ground state $\upsilon$ obtained in Lemma \ref{lem4.3} is H\"older continuous and has a power type decay at infinite, more precisely
	$$
	0<\upsilon(x)\leq \frac{C}{|x|^{N+2s}} \mbox{ if } |x|>1.
	$$
\end{Rem}

Now we prove a fundamental property on the $(PS)_{d}$ sequences for $J_{\e}$ in the {\em noncoercive} case ($V_{\infty}<\infty$).
\begin{Lem}\label{lem2.3}
	Let $d\in\R$. Assume that $V_{\infty}<\infty$ and let $(v_{n})$ be a $(PS)_{d}$ sequence for $J_{\e}$ in $\h$ with $v_{n}\rightharpoonup 0$ in $\h$. If $v_{n}\not \rightarrow 0$ in $\h$, then $d\geq c_{V_{\infty}}$.
\end{Lem}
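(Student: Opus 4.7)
The plan is to use the concentration-compactness dichotomy of Lemma \ref{compactness}, capture a nontrivial real-valued profile $w$ by translating $(v_n)$, and compare its energy in the limit problem with $d$ via the Nehari manifold $\mathcal{M}_{V_\infty}$.

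First, since $v_n\not\to 0$ in $H^s_\eps$, alternative (b) of Lemma \ref{compactness} applied to the $(PS)_d$ sequence $(v_n)$ gives $(y_n)\subset\R^N$ and $R,\beta>0$ with
\[
\liminf_n\int_{B_R(y_n)}|v_n|^2\,dx\geq\beta>0.
\]
The compact embedding $H^s_\eps\hookrightarrow L^2_{loc}(\R^N,\C)$ from Lemma \ref{embedding}, together with $v_n\rightharpoonup 0$, forces $|y_n|\to\infty$. Setting $w_n(x):=|v_n(x+y_n)|$, the diamagnetic inequality \eqref{eqDI} shows that $(w_n)$ is bounded in $H^s(\R^N,\R)$, so up to a subsequence $w_n\rightharpoonup w$ in $H^s(\R^N,\R)$, $w_n\to w$ in $L^r_{loc}$ for $r\in[1,2^*_s)$ and a.e.; the persistent mass in $B_R(0)$ yields $w\not\equiv 0$.

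Next, let $t_n>0$ be uniquely determined by $t_n w_n\in\mathcal{M}_{V_\infty}$, i.e.\ $\|w_n\|_{V_\infty}^2=\int_{\R^N}f(t_n^2 w_n^2)w_n^2\,dx$. The $(PS)_d$ identity $\langle J'_\eps(v_n),v_n\rangle=o_n(1)$ rewrites, after the change of variables $x\mapsto x+y_n$, as
\[
[v_n]_{A_\eps}^2+\int_{\R^N}V(\eps(x+y_n))w_n^2\,dx=\int_{\R^N}f(w_n^2)w_n^2\,dx+o_n(1).
\]
Combining \eqref{eqDI} (which gives $[v_n]_{A_\eps}^2\geq [w_n]^2$) with the pointwise bound $\liminf_n V(\eps(x+y_n))\geq V_\infty$ coming from \eqref{condV} and $|y_n|\to\infty$, together with the tightness of $(w_n^2)$ provided by an analogue of Lemma \ref{vanishing}, one deduces
\[
\|w_n\|_{V_\infty}^2\leq \int_{\R^N}f(w_n^2)w_n^2\,dx+o_n(1).
\]
The Nehari lower bound (analogue of Lemma \ref{LemNeharyE}(\ref{2.1}) for $\mathcal{M}_{V_\infty}$) gives $\liminf_n t_n>0$. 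If $\limsup_n t_n\geq 1+\eta$ along a subsequence, the strict monotonicity of $f$ from \eqref{f5} and Fatou's lemma on $\{w\neq 0\}$ yield
\[
0<\int_{\R^N}[f((1+\eta)^2 w^2)-f(w^2)]w^2\,dx\leq \liminf_n\int_{\R^N}[f(t_n^2 w_n^2)-f(w_n^2)]w_n^2\,dx\leq o_n(1),
\]
a contradiction; hence $\limsup_n t_n\leq 1$.

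Finally, set $G(s):=sf(s)-F(s)$, which is nonnegative by \eqref{f4} and nondecreasing since $G'(s)=sf'(s)$ by \eqref{f5}. Then
\[
c_{V_\infty}\leq I_{V_\infty}(t_n w_n)=\tfrac{1}{2}\int_{\R^N}G(t_n^2 w_n^2)\,dx\leq \tfrac{1}{2}\int_{\R^N}G(w_n^2)\,dx+o_n(1),
\]
where the last step uses $\limsup_n t_n\leq 1$, the monotonicity of $G$, and a short Taylor estimate combined with the subcritical growth of $f$ from \eqref{f3} to absorb the increment $(t_n^2-1)^+$ when $t_n>1$; meanwhile,
\[
d=\lim_n\Bigl(J_\eps(v_n)-\tfrac{1}{2}\langle J'_\eps(v_n),v_n\rangle\Bigr)=\lim_n\tfrac{1}{2}\int_{\R^N}G(|v_n|^2)\,dx=\lim_n\tfrac{1}{2}\int_{\R^N}G(w_n^2)\,dx,
\]
so $d\geq c_{V_\infty}$. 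The most delicate point will be the sharp inequality $\|w_n\|_{V_\infty}^2\leq \int f(w_n^2)w_n^2\,dx+o_n(1)$: transforming the pointwise lower bound $\liminf_n V(\eps(x+y_n))\geq V_\infty$ into a uniform $o_n(1)$ control of the potential discrepancy $\int(V_\infty-V(\eps(x+y_n)))^+ w_n^2\,dx$ requires splitting $\R^N$ into a bounded part, where dominated convergence applies via Lemma \ref{embedding}, and its complement, where the tightness of $(w_n^2)$ from (the real analogue of) Lemma \ref{vanishing} prevents mass from escaping to infinity.
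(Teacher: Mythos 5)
Your skeleton matches the paper's (project $|v_n|$ onto $\mathcal{M}_{V_\infty}$ via $t_n$, show $\limsup_n t_n\leq 1$, then compare energies through $G(s):=sf(s)-F(s)$), but the proposal hides the essential technical work behind two justifications that do not hold up. First, for the potential estimate $\int(V_\infty-V(\e(x+y_n)))^+\,w_n^2\,dx=o_n(1)$ you invoke ``tightness of $(w_n^2)$ from an analogue of Lemma~\ref{vanishing}''; this does not work. Lemma~\ref{vanishing} controls mass only on annuli $B_j\setminus B_r$ (not on the entire exterior $B_r^c$), holds only along a subsequence, and is stated for bounded sequences in $\h$, not for the translated real-valued sequence $(w_n)$, which need not be tight. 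The estimate is nevertheless true, but the correct route is elementary and avoids tightness altogether: change variables back to $\int(V_\infty-V(\e z))^+|v_n|^2\,dz$, use that for every $\zeta>0$ there is $R>0$ with $V_\infty-V(\e z)\leq\zeta$ for $|z|\geq R$, combine this with $v_n\to 0$ in $L^2(B_R,\C)$ (compact embedding plus $v_n\rightharpoonup 0$) and $\int_{B_R^c}|v_n|^2\leq\|v_n\|_\e^2/V_0\leq C$, and then let $\zeta\to 0$.

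Second, and more seriously, the unifying inequality $\tfrac12\int G(t_n^2 w_n^2)\,dx\leq\tfrac12\int G(w_n^2)\,dx+o_n(1)$ cannot be obtained by ``a short Taylor estimate combined with the subcritical growth of $f$''. A Taylor estimate on $G$ requires an \emph{upper} bound on $G'(s)=sf'(s)$, but (\ref{f1})--(\ref{f5}) only give a \emph{lower} bound for $f'$, and (\ref{f3}) controls $f$, not $f'$. Concretely, when $t_n>1$ the increment $\int w_n^2\bigl[f(t_n^2 w_n^2)-f(w_n^2)\bigr]\,dx$ is nonnegative and, under the stated hypotheses, only $O(1)$; nothing forces it to be $o_n(1)$. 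This is precisely why the paper separates the cases $\limsup_n t_n<1$ (where $t_n<1$ eventually, and monotonicity of $G$ gives the comparison with no asymptotics) and $\limsup_n t_n=1$ (where one cannot compare $G(t_n^2|v_n|^2)$ and $G(|v_n|^2)$ directly; instead one expands $J_\e(v_n)-I_{V_\infty}(t_n|v_n|)$, uses the diamagnetic inequality, applies the mean value theorem to $F$ --- not to $G$ --- together with (\ref{propf1}), and runs a second $\zeta$-estimate on the potential discrepancy). Your unified $G$-comparison therefore has a genuine gap in the overshoot regime $t_n>1$, $t_n\to 1$, and the step labeled ``short Taylor estimate'' must be replaced by the paper's case analysis.
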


\begin{proof}
	Let $(t_{n})\subset (0, +\infty)$ such that $(t_{n}|v_{n}|)\subset \mathcal{M}_{V_{\infty}}$.\\
	Firstly we prove that $	\limsup_{n} t_{n} \leq 1$.\\
	Assume by contradiction that there exist $\delta>0$ and a subsequence, still denoted by $(t_{n})$, such that 
	\begin{equation}\label{tv6}
	t_{n}\geq 1+ \delta \quad \forall n\in \mathbb{N}. 
	\end{equation}
	Since $(v_{n})$ is a $(PS)_{d}$ sequence for $J_{\e}$ we have 
	\begin{equation}\label{tv7}
	[v_{n}]_{A_{\e}}^{2} + \int_{\R^{N}} V(\e x) |v_{n}|^{2} dx = \int_{\R^{N}} f(|v_{n}|^{2}) |v_{n}|^{2} dx +o_{n}(1). 
	\end{equation}
	On the other hand, $t_{n}|v_{n}| \in \mathcal{M}_{V_{\infty}}$. Thus we get
	\begin{equation}\label{tv8}
	 [|v_{n}|]^{2}+  V_{\infty} |v_{n}|_2^{2}  =  \int_{\R^{N}} f(t^{2}_{n}|v_{n}|^{2}) |v_{n}|^{2} dx. 
	\end{equation}
	Putting together 
	\eqref{tv7}, \eqref{tv8} and using
	\eqref{eqDI} we obtain
	\begin{equation}\label{tv88}
	\int_{\R^{N}} \left[ f(t^{2}_{n}|v_{n}|^{2}) - f(|v_{n}|^{2}) \right]|v_{n}|^{2} \,dx\leq \int_{\R^{N}} \left( V_{\infty} - V(\e x)\right) |v_{n}|^{2} dx +o_{n}(1).
	\end{equation}
	Now, by the assumption \eqref{condV}, we can see that for every $\zeta>0$ there exists $R=R(\zeta)>0$ such that 
	\begin{equation}\label{tv9}
	V_{\infty} - V(\e x) \leq  \zeta \quad \mbox{ for any } |x|\geq R. 
	\end{equation}
	Combining \eqref{tv9} with the fact that, by Lemma \ref{embedding}, $v_{n}\rightarrow 0$ in $L^{2}(B_{R}, \C)$, so that $|v_{n}|\rightarrow 0$ in $L^{2}(B_{R})$, and with the boundedness of $(v_{n})$ in $\h$, we get
	\begin{align*}
	\int_{\R^{N}} \left( V_{\infty} - V(\e x)\right) |v_{n}|^{2} dx
	&= \int_{B_R(0)} \left( V_{\infty} - V(\e x)\right) |v_{n}|^{2} dx
	+ \int_{B_R^c(0)} \left( V_{\infty} - V(\e x)\right) |v_{n}|^{2} dx\\
	&\leq V_{\infty}\int_{B_R(0)} |v_{n}|^{2} dx + \zeta \int_{B_R^c(0)} |v_{n}|^{2} dx\\
	&\leq o_{n}(1) + \frac{\zeta}{V_{0}} \|v_{n}\|_{\e}^{2}\leq o_{n}(1)+ \zeta C. 
	\end{align*}
	Thus, in view of \eqref{tv88}, we deduce that
	\begin{equation}\label{tv10}
	\int_{\R^{N}} \left[ f(t^{2}_{n}|v_{n}|^{2}) - f(|v_{n}|^{2}) \right]|v_{n}|^{2} \,dx\leq \zeta C +o_{n}(1).
	\end{equation}
	Since $v_{n}\not \rightarrow 0$, we can apply Lemma \ref{compactness} to deduce the existence of a sequence $(y_{n})\subset \R^{N}$, and the existence of two positive numbers $\bar{R}, \beta$ such that
	\begin{equation}\label{tv11}
	\int_{B_{\bar{R}}(y_{n})} |v_{n}|^{2} dx \geq \beta>0.
	\end{equation}
	Now, let us consider $w_{n}= |v_{n}|(\cdot+y_{n})$. Taking into account that \eqref{condV}, \eqref{eqDI}, and the boundedness of $(v_{n})$ in $\h$, we can see that 
	\[
	\|w_{n}\|_{V_{0}}^{2}= \||v_{n}|\|_{V_{0}}^{2} \leq 
	\|v_{n}\|_{\e}^{2}\leq C.
	\]
	Therefore $w_{n}\rightharpoonup w$ in $H^{s}(\R^{N}, \R)$ and $w_{n}\rightarrow w$ in $L^{r}_{\rm loc}(\R^{N}, \R)$ for all $r\in [2, 2^{*}_{s})$. By \eqref{tv11} 
	\[
	\int_{B_{\bar{R}}(0)} w^2
	= \lim_n \int_{B_{\bar{R}}(0)} w_n^2
	\geq \beta
	\]
	and so there exists $\Omega \subset \R^{N}$ with positive measure and such that $w\neq 0$ in $\Omega$.
	By using \eqref{tv6}  and \eqref{tv10} we can infer
	\begin{align*}
	\int_{\Omega} \left(f((1+\delta)^2 w_{n}^{2})- f(w^{2}_{n})\right) w_{n}^{2}dx \leq \zeta C+o_{n}(1). 
	\end{align*}
	By applying Fatou's Lemma and by (\ref{f5}) we obtain
	\begin{align*}
	0<\int_{\Omega} \left(f((1+\delta)^2  w^{2})- f(w^{2})\right) w^{2}dx \leq \zeta C 
	\end{align*}
	and by the arbitrariness of $\zeta>0$ we get a contradiction. \\
	Now, two cases can occur.\\
	{\bf Case 1:} $\limsup_{n} t_{n}=1$.\\
	In this case there exists  a subsequence still denoted by $(t_{n})$ such that $t_{n}\rightarrow 1$. Taking into account that $\{v_{n}\}$ is a $(PS)_{d}$ sequence for $J_{\e}$, $c_{V_{\infty}}$ is the minimax level of $I_{V_{\infty}}$, and \eqref{eqDI}, we have
	\begin{equation}\label{tv12new}
	\begin{split}
	d+ o_{n}(1)&= J_{\e}(v_{n}) \\
	&\geq J_{\e}(v_{n}) -I_{V_{\infty}}(t_{n}|v_{n}|) + c_{V_{\infty}}\\
	&\geq \frac{1-t_{n}^{2}}{2} [|v_{n}|]^{2} + \frac{1}{2} \int_{\R^{N}} \left( V(\e x) - t_{n}^{2} V_{\infty}\right) |v_{n}|^{2} dx \\
	&\qquad
	+ \frac{1}{2}\int_{\R^{N}} \left[ F(t^{2}_{n} |v_{n}|^{2}) -F(|v_{n}|^{2}) \right] \, dx+ c_{V_{\infty}}.
	\end{split}
	\end{equation}
Since $(|v_{n}|)$ is bounded in $H^{s}(\R^{N}, \R)$ and $t_{n}\rightarrow 1$, we can see that 
\begin{equation}\label{tv14}
\frac{(1-t_{n}^{2})}{2} [|v_{n}|]^{2}= o_{n}(1). 
\end{equation}
	Now, using \eqref{condV}, we have that for every $\zeta>0$ there exists $R=R(\zeta)>0$ such that for any $|x|>R$ it holds
	\[
	V(\e x) - t_{n}^{2} V_{\infty} =\left(V(\e x) - V_{\infty} \right) + (1- t_{n}^{2}) V_{\infty}\geq -\zeta + (1- t_{n}^{2}) V_{\infty}.
	\]
	Thus, since $(v_{n})$ is bounded in $\h$, $|v_{n}|\rightarrow 0$ in $L^{p}(B_{R})$, $t_{n}\rightarrow 1$,  we get
	\begin{equation}\label{tv13}
	\begin{split}
	\int_{\R^{N}} \left( V(\e x) - t_{n}^{2} V_{\infty}\right) |v_{n}|^{2} dx
	&= \int_{B_R(0)} \left( V(\e x) - t_{n}^{2} V_{\infty}\right) |v_{n}|^{2} dx\\
	&\qquad
	+ \int_{B_R^c(0)} \left( V(\e x) - t_{n}^{2} V_{\infty}\right) |v_{n}|^{2} dx \\
	&\geq 
	(V_{0}- t_{n}^{2}V_{\infty}) \int_{B_R(0)} |v_{n}|^{2} dx 
	- \zeta \int_{B_R^c(0)} |v_{n}|^{2} dx\\
	&\qquad
	+ V_{\infty}(1- t_{n}^{2}) \int_{B_R^c(0)} |v_{n}|^{2} dx  \\
	&\geq o_{n}(1)- \frac{C}{V_0}\zeta . 
	\end{split}
	\end{equation}
	Finally, using the Mean Value Theorem, (\ref{propf1}) in Lemma \ref{propf}, $t_{n}\rightarrow 1$, and the boundedness of $(|v_{n}|)$, we get 
	\begin{equation}\label{tv16}
	\begin{split}
	\left|\int_{\R^{N}} \left[ F(t^{2}_{n} |v_{n}|^{2}) -F(|v_{n}|^{2}) \right] \, dx\right|
	&\leq
	\int_{\R^{N}} |f(\theta_n |v_n|^2)| |t^{2}_{n}-1| |v_{n}|^{2} \, dx\\
	&\leq
	(C_1 |v_{n}|_2^{2}+ C_2 |v_{n}|_q^{q})  |t^{2}_{n}-1| 
	=o_{n}(1).
	\end{split}
	\end{equation}
	Now, putting together \eqref{tv12new}, \eqref{tv14}, \eqref{tv13} and \eqref{tv16} we can infer that 
	\begin{align*}
	d+ o_{n}(1)\geq o_{n}(1) - \zeta C + c_{V_{\infty}}, 
	\end{align*}
	and taking the limit as $n\rightarrow \infty$ we get $d \geq c_{V_{\infty}}$. \\
	{\bf Case 2:} $\limsup_{n} t_{n}=t_{0}<1$. \\
	In this case there exists a subsequence still denoted by $(t_{n})$, such that $t_{n}\rightarrow t_{0}$ and $t_{n}<1$ for any $n\in \mathbb{N}$. 
	Since $(v_{n})$ is a  bounded $(PS)_{d}$ sequence for $J_{\e}$, we have 
	\begin{align}\label{tv17}
	d+o_{n}(1)= J_{\e}(v_{n}) - \frac{1}{2} \langle J'_{\e}(v_{n}), v_{n}\rangle  = \frac{1}{2} \int_{\R^{N}} \left(f(|v_{n}|^{2}) |v_{n}|^{2} -  F(|v_{n}|^{2})\right) \,dx. 
	\end{align}
	Observe that, by (\ref{f5}), the map $t\mapsto f(t)t- F(t)$ is increasing for $t>0$.\\
	Hence, since  $t_{n}|v_{n}|\in \mathcal{M}_{V_{\infty}}$ and $t_{n}<1$, from \eqref{tv17}, we obtain
	\begin{align*}
	c_{V_{\infty}} &\leq I_{V_{\infty}}(t_{n}|v_{n}|)  \\
	&= I_{V_{\infty}}(t_{n}|v_{n}|) -t_{n} \frac{1}{2} \langle I'_{V_{\infty}}(t_{n}|v_{n}|),|v_{n}|\rangle \\
	&=\frac{1}{2} \int_{\R^{N}}  \left( f(t^{2}_{n}|v_{n}|^{2}) t^{2}_{n}|v_{n}|^{2}- F(t^{2}_{n}|v_{n}|^{2}) \right) dx \\
	&\leq  \frac{1}{2} \int_{\R^{N}} \left( f(|v_{n}|^{2}) |v_{n}|^{2} - F(|v_{n}|^{2})\right) \,dx \\
	&=d +o_{n}(1). 
	\end{align*}
	Passing to the limit as $n\rightarrow \infty$ we get $d\geq c_{V_{\infty}}$.
\end{proof}

Thus we are ready to give conditions on the levels $c$ so that  $J_{\e}$ satisfies the $(PS)_{c}$ condition.

\begin{Prop}\label{prop2.1}
	The functional $J_{\e}$ satisfies the $(PS)_{c}$ condition at any level $c<c_{V_{\infty}}$ if $V_{\infty}<\infty$ and at any level $c\in \R$ if $V_{\infty}=\infty$. 
\end{Prop}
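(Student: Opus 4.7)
The plan is to proceed via a Brezis-Lieb type splitting adapted to the magnetic setting, together with the two compactness tools already established for $(PS)$ sequences. Let $(u_n)\subset H^s_\eps$ be a $(PS)_c$ sequence. First I would recall that the Ambrosetti-Rabinowitz condition \eqref{f4} forces $(u_n)$ to be bounded in $H^s_\eps$, exactly as in the computation following \eqref{PSc}. Passing to the subsequence $(u_{n_j})$ produced by Lemma \ref{vanishing} (applied with $\tau=2$ and $\tau=q$), one can extract a further subsequence with $u_{n_j}\rightharpoonup u$ weakly in $H^s_\eps$. Testing $\langle J'_\eps(u_{n_j}),\phi\rangle\to 0$ against any $\phi\in C^\infty_c(\R^N,\C)$ and using the local compactness in Lemma \ref{embedding} to pass to the limit in the nonlinear term, I would conclude that $J'_\eps(u)=0$. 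From $\langle J'_\eps(u),u\rangle=0$, \eqref{f4} and the standard manipulation
\[
J_\eps(u)=J_\eps(u)-\tfrac{1}{\theta}\langle J'_\eps(u),u\rangle\geq \left(\tfrac{1}{2}-\tfrac{1}{\theta}\right)\|u\|_\eps^2\geq 0.
\]

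Next, I would exploit the truncation $\hat{u}_j=\varphi_j u$ introduced in \eqref{eqtruncation} and set $v_{n_j}:=u_{n_j}-\hat{u}_j$. Since $\hat{u}_j\to u$ in $H^s_\eps$ by Lemma \ref{truncation}, we have $v_{n_j}\rightharpoonup 0$ in $H^s_\eps$. The Hilbert structure of $H^s_\eps$, together with the weak convergence of $u_{n_j}$ and the strong convergence of $\hat{u}_j$, yields the quadratic splitting
\[
\|u_{n_j}\|_\eps^2=\|v_{n_j}\|_\eps^2+\|\hat{u}_j\|_\eps^2+o_j(1),
\]
since the cross term $2\Re\langle v_{n_j},\hat{u}_j\rangle_\eps$ vanishes in the limit. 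Combining this with Lemma \ref{propf}(\ref{propf3}) and \ref{propf}(\ref{propf4}), I would obtain both $J_\eps(v_{n_j})\to c-J_\eps(u)$ and $J'_\eps(v_{n_j})\to 0$ in the dual of $H^s_\eps$, so that $(v_{n_j})$ is itself a $(PS)_{c-J_\eps(u)}$ sequence with $v_{n_j}\rightharpoonup 0$.

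To conclude, in the noncoercive case $V_\infty<\infty$ I would apply Lemma \ref{lem2.3} to $(v_{n_j})$: if $v_{n_j}\not\to 0$ in $H^s_\eps$, then $c-J_\eps(u)\geq c_{V_\infty}$, which combined with $J_\eps(u)\geq 0$ contradicts $c<c_{V_\infty}$. Hence $v_{n_j}\to 0$ in $H^s_\eps$, so $u_{n_j}\to u$. In the coercive case $V_\infty=\infty$, the second assertion of Lemma \ref{embedding} gives $|v_{n_j}|\to 0$ in $L^r(\R^N,\R)$ for every $r\in[2,2^*_s)$; invoking Lemma \ref{propf}(\ref{propf1}) with $\xi$ small this forces $\int_{\R^N}f(|v_{n_j}|^2)|v_{n_j}|^2\,dx\to 0$, and then $\langle J'_\eps(v_{n_j}),v_{n_j}\rangle\to 0$ implies $\|v_{n_j}\|_\eps\to 0$, again giving $u_{n_j}\to u$ in $H^s_\eps$.

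The main technical obstacle is the Brezis-Lieb-type decomposition along the diagonal sequence $u_{n_j}-\hat{u}_j$: the quadratic part is handled by the Hilbert structure because of the strong convergence $\hat{u}_j\to u$ in $H^s_\eps$ delivered by Lemma \ref{truncation}, while the splitting of the nonlinear functional and of its derivative is exactly what Lemma \ref{propf}(\ref{propf3})--(\ref{propf4}) provide, and whose proofs use in a crucial way the vanishing estimate \eqref{DL}. A secondary delicate point is the sign $J_\eps(u)\geq 0$ needed to compare $c-J_\eps(u)$ with $c_{V_\infty}$, which is secured by \eqref{f4} rather than by any sign assumption on $V$.
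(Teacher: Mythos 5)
Your proposal is correct and follows essentially the same route as the paper: boundedness via \eqref{f4}, passage to the subsequence of Lemma \ref{vanishing}, the splitting $v_{n_j}=u_{n_j}-\hat{u}_j$ with the truncation of \eqref{eqtruncation}, the decompositions of $J_\eps$ and $J'_\eps$ via Lemma \ref{propf}(\ref{propf3})--(\ref{propf4}), and then Lemma \ref{lem2.3} (noncoercive case) or Lemma \ref{embedding} plus Lemma \ref{propf}(\ref{propf1}) (coercive case). The only cosmetic difference is that you obtain $J_\eps(u)\geq 0$ subtracting $\tfrac{1}{\theta}\langle J'_\eps(u),u\rangle$ while the paper subtracts $\tfrac{1}{2}\langle J'_\eps(u),u\rangle$; both are immediate consequences of \eqref{f4}.
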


\begin{proof}
	Let $(u_{n})$ be a $(PS)_{c}$ sequence for $J_{\e}$. Then $(u_{n})$ is bounded in $\h$ and, up to a subsequence, $u_{n}\rightharpoonup u$ in $\h$ and $u_{n}\rightarrow u$ in $L^{q}_{\rm loc}(\R^{N}, \C)$ for any $q\in [1, 2^{*}_{s})$.
	Using also the assumptions (\ref{f2}), (\ref{f3}),
	it is easy to deduce that $J'_{\e}(u)=0$ and so, using (\ref{f4}), we can see that
	\begin{equation}\label{tv173}
	J_{\e}(u)=J_{\e}(u)-\frac{1}{2} \langle J'_{\e}(u),u\rangle=\frac{1}{2} \int_{\R^{N}} (f(|u|^{2})|u|^{2}- F(|u|^{2}))dx\geq 0.
	\end{equation}
	In view of Lemma \ref{vanishing} we can find a subsequence $(u_{n_{j}})\subset \h$ verifying \eqref{DL}.\\
	Now, let $v_{j}= u_{n_{j}}-\hat{u}_{j}$ where $\hat{u}_{j}$ is defined as in \eqref{eqtruncation}. 
	We claim that
	\begin{align}\label{tv171}
	J_{\e}(v_{j})= c-J_{\e}(u)+o_{j}(1)
	\end{align}
	and 
	\begin{align}\label{tv172}
	J'_{\e}(v_{j})=o_{j}(1).
	\end{align}
	To prove \eqref{tv171},	let us observe that
	\begin{equation*}
	\begin{split}
	J_{\e}(v_{j})-J_{\e}(u_{n_{j}})+J_{\e}(\hat{u}_{j})
	&\, =
	[\|\hat{u}_j\|_{\eps}^2 - \langle u_{n_{j}}, \hat{u}_{j}\rangle_\eps]
	+\int_{\R^{N}} [F(|u_{n_{j}}|^{2})-F(|v_{j}|^{2})-F(|\hat{u}_{j}|^{2})] dx \\
	&=:A_{j}+B_{j}. 
	\end{split}
	\end{equation*}
	In view of the weak convergence of $(u_{n_{j}})$ to $u$ in $\h$ and Lemma \ref{truncation}, we can see that $A_{j} \rightarrow 0$ as $j\rightarrow \infty$.
	Moreover, by (\ref{propf3}) in Lemma \ref{propf}, we have that  $B_{j}\rightarrow 0$  as $j\rightarrow \infty$.\\
	To show \eqref{tv172} we observe that
	\begin{align*}
	\left|\langle  J'_{\e}(v_{j})-J'_{\e}(u_{n_{j}})+J'_{\e}(\hat{u}_{j}), \phi \rangle \right| 
	&=\left|\Re\int_{\R^{N}} [f(|u_{n_{j}}|^{2})u_{n_{j}}-f(|v_{j}|^{2})v_{j}-f(|\hat{u}_{j}|^{2})\hat{u}_{j}] \bar{\phi} dx\right| \\
	&\leq \int_{\R^{N}} |f(|u_{n_{j}}|^{2})u_{n_{j}}-f(|v_{j}|^{2})v_{j}-f(|\hat{u}_{j}|^{2})\hat{u}_{j}| |\phi| dx
	\end{align*}
	and so, by (\ref{propf4}) in Lemma \ref{propf} we get that $\langle  J'_{\e}(v_{j})-J'_{\e}(u_{n_{j}})+J'_{\e}(\hat{u}_{j}), \phi \rangle\rightarrow 0$ for any $\phi\in \h$ such that $\|\phi\|_{\e}\leq 1$. Thus, since $J'_{\e}(u_{n_{j}})\rightarrow 0$ and $J'_{\e}(\hat{u}_{j})\rightarrow J'_{\e}(u)=0$, we can infer that \eqref{tv172} is satisfied.\\
	Let us assume that $V_{\infty}<\infty$ and $c<c_{V_\infty}$. By \eqref{tv171} and \eqref{tv173} we have that $c-J_{\e}(u)\leq c <c_{V_\infty}$.
		Thus, since $(v_j)$ is a $(PS)_{c-J_{\e}(u)}$ sequence for $J_{\e}$ and $v_{j}\rightharpoonup 0$ in $\h$, by Lemma \ref{lem2.3} we infer $v_{j}\rightarrow 0$ in $\h$. Hence Lemma \ref{truncation} implies that  $u_{n_{j}}\rightarrow u$ in $\h$ as $j\rightarrow \infty$.\\
	If $V_{\infty}=+\infty$. Then, by Lemma \ref{embedding},  $v_{j}\rightarrow 0$ in $L^{r}(\R^{N}, \C)$ for any $r\in [2, 2^{*}_{s})$ and by \eqref{tv172} and (\ref{propf1}) in Lemma \ref{propf} we deduce that 
	$$
	\|v_{j}\|^{2}_{\e}=\int_{\R^{N}} f(|v_{j}|^{2})|v_{j}|^{2}dx+ o_{j}(1)=o_{j}(1).
	$$
	Hence, as before, $u_{n_{j}}\rightarrow u$ in $\h$ as $j\rightarrow \infty$ and we conclude.
\end{proof}

Now we show that $\N_{\e}$ is a natural constraint, namely that the constrained critical points of the functional $J_{\e}$ on $\N_{\e}$ are critical points of $J_{\e}$ in $\h$.
\begin{Prop}\label{prop2.2}
	The functional $J_{\e}$ restricted to $\N_{\e}$ satisfies the $(PS)_{c}$ condition at any level $c<c_{V_{\infty}}$ if $V_{\infty}<\infty$ and at any level $c\in \R$ if $V_{\infty}=\infty$.
\end{Prop}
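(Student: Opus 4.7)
The plan is to verify that $\N_\e$ is a natural constraint for $J_\e$, namely that every constrained $(PS)_c$ sequence on $\N_\e$ is in fact an unconstrained $(PS)_c$ sequence for $J_\e$ in $\h$; the conclusion will then follow directly from Proposition \ref{prop2.1}. Let $(u_n)\subset \N_\e$ satisfy $J_\e(u_n)\to c$ and $\|(J_\e|_{\N_\e})'(u_n)\|_{*}\to 0$. Since $u_n\in\N_\e$, the Ambrosetti--Rabinowitz trick based on (\ref{f4}) and $\langle J_\e'(u_n),u_n\rangle=0$ yields $J_\e(u_n)\geq\bigl(\tfrac{1}{2}-\tfrac{1}{\theta}\bigr)\|u_n\|_\e^2$, so $(u_n)$ is bounded in $\h$; Lemma \ref{LemNeharyE}(\ref{2.1}) provides the matching lower bound $\|u_n\|_\e\geq K>0$.

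Set $G(u):=\langle J_\e'(u),u\rangle=\|u\|_\e^2-\int_{\R^N}f(|u|^2)|u|^2\,dx$, so that $\N_\e=G^{-1}(0)\setminus\{0\}$. Direct differentiation together with $u_n\in\N_\e$ gives
\[
\langle G'(u_n),u_n\rangle = 2\|u_n\|_\e^2-2\int_{\R^N}\bigl[f(|u_n|^2)+f'(|u_n|^2)|u_n|^2\bigr]|u_n|^2\,dx=-2\int_{\R^N} f'(|u_n|^2)|u_n|^4\,dx.
\]
Integrating (\ref{f5}) from $0$ to $t>0$ (using $f(0)=0$ from (\ref{f1})) yields $f(t)\geq\tfrac{2C_\sigma}{\sigma-2}\,t^{(\sigma-2)/2}$, hence by the Nehari identity
\[
\|u_n\|_\e^2=\int_{\R^N}f(|u_n|^2)|u_n|^2\,dx\geq\frac{2C_\sigma}{\sigma-2}\int_{\R^N}|u_n|^\sigma\,dx,
\]
which combined with $\|u_n\|_\e\geq K$ forces $\int_{\R^N}|u_n|^\sigma\,dx\geq c_0>0$. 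Applying (\ref{f5}) a second time, $\int_{\R^N} f'(|u_n|^2)|u_n|^4\,dx\geq C_\sigma\int_{\R^N}|u_n|^\sigma\,dx\geq C_\sigma c_0$, and therefore $|\langle G'(u_n),u_n\rangle|\geq 2C_\sigma c_0>0$ uniformly in $n$.

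By the Lagrange multiplier rule there exist $\lambda_n\in\R$ with $J_\e'(u_n)=\lambda_n G'(u_n)+o(1)$ in $(\h)^{*}$. Pairing with $u_n$, the Nehari condition together with the boundedness of $(u_n)$ gives $\lambda_n\langle G'(u_n),u_n\rangle=o(1)$, so $\lambda_n\to 0$ thanks to the uniform lower bound above. Since $f\in C^1$ implies $J_\e\in C^2$, the map $G'$ sends bounded subsets of $\h$ into bounded subsets of $(\h)^{*}$, so $\lambda_n G'(u_n)\to 0$ and consequently $J_\e'(u_n)\to 0$ in $(\h)^{*}$. Thus $(u_n)$ is an unconstrained $(PS)_c$ sequence for $J_\e$ on $\h$, and the conclusion is immediate from Proposition \ref{prop2.1}. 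The main technical point is the uniform positivity of $|\langle G'(u_n),u_n\rangle|$: qualitatively this quantity is strictly negative by (\ref{f5}), but separating it from zero uniformly in $n$ requires the double invocation of (\ref{f5}) chained through the Nehari identity, as carried out above.
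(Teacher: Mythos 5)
Your overall strategy is the same as the paper's: invoke the Lagrange multiplier rule to write $J_\e'(u_n)=\lambda_n G'(u_n)+o_n(1)$, show $\lambda_n\to 0$, then conclude $J_\e'(u_n)\to 0$ and apply Proposition \ref{prop2.1}. The computation $\langle G'(u_n),u_n\rangle=-2\int f'(|u_n|^2)|u_n|^4\,dx$ also agrees with the paper. However, your proof that $|\langle G'(u_n),u_n\rangle|$ is bounded away from zero contains a genuine logical error. You establish $\|u_n\|_\e^2\geq\tfrac{2C_\sigma}{\sigma-2}\,|u_n|_\sigma^\sigma$ and then claim that this, ``combined with $\|u_n\|_\e\geq K$, forces $|u_n|_\sigma^\sigma\geq c_0>0$.'' The inequality goes the wrong way: $A\geq B$ together with $A\geq K^2$ gives no lower bound on $B$ whatsoever. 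What you have actually proved is an \emph{upper} bound on $|u_n|_\sigma$, which does not preclude $|u_n|_\sigma\to 0$.

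The paper avoids this pitfall by arguing by contradiction. It sets $\ell=\lim\langle T_\e'(u_n),u_n\rangle\leq 0$ and supposes $\ell=0$. Then $|u_n|_\sigma\to 0$; since $(u_n)$ is bounded in $\h$ (hence in $L^2\cap L^{2^*_s}$), interpolation yields $|u_n|_q\to 0$. Plugging this into the Nehari identity $\|u_n\|_\e^2=\int f(|u_n|^2)|u_n|^2\,dx\leq\xi V_0^{-1}\|u_n\|_\e^2+C_\xi|u_n|_q^q$ and choosing $\xi$ small gives $\|u_n\|_\e\to 0$, contradicting Lemma \ref{LemNeharyE}(\ref{2.1}). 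Your argument could be salvaged along similar lines (the uniform lower bound $|u_n|_\sigma\geq c_0$ does hold, but you must derive it from the lower bound $|u_n|_q\geq c_1>0$ obtained via the subcritical growth of $f$ and then interpolate in the other direction using the $L^2$ and $L^{2^*_s}$ boundedness of $(u_n)$), but as written the deduction is a non sequitur. A secondary and less serious point: asserting ``$f\in C^1$ implies $J_\e\in C^2$'' needs a growth bound on $f'$, which hypotheses (\ref{f1})--(\ref{f5}) do not explicitly provide; the paper instead bounds $T_\e'(u_n)$ directly by an explicit estimate of each term.
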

\begin{proof}
	Let $(u_{n})\subset \N_{\e}$ be a $(PS)_{c}$ sequence of restricted to $\N_{\e}$. Then, by \cite[Proposition 5.12]{W}, $J_{\e}(u_{n})\rightarrow c$  as $n\to\infty$ and there exists $(\lambda_{n})\subset \R$ such that	
	\begin{equation}\label{tv18}
	J'_{\e}(u_{n})=\lambda_{n} T'_{\e}(u_{n})+o_{n}(1)
	\end{equation}
	where $	T_{\e}: H^{s}_{\e}\rightarrow \R$ is defined as
	$$
	T_{\e}(u)=\|u\|^{2}_{\e}-\int_{\R^{N}} f(|u|^{2})|u|^{2} dx.
	$$
	By (\ref{f5}) we can see that
	\begin{align*}
	\langle T'_{\e}(u_{n}),u_{n}\rangle&=2\|u_{n}\|^{2}_{\e}-2\int_{\R^{N}} f(|u_{n}|^{2})|u_{n}|^{2}dx-2\int_{\R^{N}} f'(|u_{n}|^{2})|u_{n}|^{4}dx \\
	&=-2\int_{\R^{N}} f'(|u_{n}|^{2})|u_{n}|^{4}dx\leq -2C_{\sigma} |u_{n}|_\sigma^{\sigma}< 0.
	\end{align*}
	Up to a subsequence, we may assume that $\langle T'_{\e}(u_{n}),u_{n}\rangle\rightarrow \ell\leq 0$.\\
	If $\ell=0$, then $$o_{n}(1)=|\langle T'_{\e}(u_{n}),u_{n}\rangle|\geq C |u_{n}|_\sigma^{\sigma}$$ so we obtain that $u_{n}\rightarrow 0$ in $L^{\sigma}(\R^{N}, \C)$.
	Observe that, since $(u_{n})\subset \N_{\e}$ and $J_{\e}(u_{n})\rightarrow c$  as $n\to\infty$, then $(u_{n})$ is bounded in $\h$. Thus, by interpolation, we also have $u_{n}\rightarrow 0$ in $L^{q}(\R^{N}, \C)$. Hence, by (\ref{propf1}) in Lemma \ref{propf}, we get
	$$
	\|u_{n}\|^{2}_{\e}=\int_{\R^{N}} f(|u_{n}|^{2})|u_{n}|^{2}dx\leq \frac{\xi}{V_{0}} \|u_{n}\|^{2}_{\e}+C_{\xi}|u_{n}|_{q}^{q}= \frac{\xi}{V_{0}} \|u_{n}\|^{2}_{\e}+o_{n}(1),
	$$
	which implies that $u_{n}\rightarrow 0$ in $\h$. This is impossible in view of (\ref{2.1}) of Lemma \ref{LemNeharyE}. Therefore $\ell<0$ and by \eqref{tv18} we deduce that $\lambda_{n}=o_{n}(1)$. 
	Moreover, by the assumptions on $f$ we have that for every $\phi\in\h$
	\begin{align*}
	|\langle T'_{\e}(u_{n}),\phi \rangle|
	&\leq
	2\|u_{n}\|_{\e}\|\phi\|_{\e}
	+ 2\int_{\R^{N}} |f(|u_{n}|^{2})| |u_{n}| |\phi|dx
	+2\int_{\R^{N}} |f'(|u_{n}|^{2}) | |u_{n}|^{3} |\phi| dx \\
	&\leq
	C\|u_{n}\|_{\e} (1+ \|u_{n}\|_{\e}^{q-2})\|\phi\|_{\e}.
	\end{align*}
	Then, the boundedness of $(u_{n})$ implies the boundedness of $T'_{\e}(u_{n})$ and so, by \eqref{tv18} we infer that $J'_{\e}(u_{n})=o_{n}(1)$, that is $(u_{n})$ is a $(PS)_{c}$ sequence for $J_{\e}$. Hence, it is enough to apply Proposition \ref{prop2.1} to obtain the thesis.
\end{proof}

As a consequence we have the following result.
\begin{Cor}\label{cor2.1}
	The constrained critical points of the functional $J_{\e}$ on $\N_{\e}$ are critical points of $J_{\e}$ in $\h$.
\end{Cor}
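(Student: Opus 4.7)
The plan is to show that the Nehari manifold $\N_\eps$ is a natural constraint via a standard Lagrange multiplier argument, using that $\langle T'_\eps(u),u\rangle$ is strictly negative on $\N_\eps$. This strict negativity has essentially already been established in the proof of Proposition \ref{prop2.2}.

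First, I would recall the computation carried out there: for any $u\in\N_\eps$, the map $T_\eps(u)=\|u\|_\eps^2-\int_{\R^N} f(|u|^2)|u|^2\,dx$ satisfies
\[
\langle T'_\eps(u),u\rangle
= 2\|u\|_\eps^2 - 2\int_{\R^N} f(|u|^2)|u|^2\,dx - 2\int_{\R^N} f'(|u|^2)|u|^4\,dx
= -2\int_{\R^N} f'(|u|^2)|u|^4\,dx,
\]
where we used $\langle J'_\eps(u),u\rangle=0$ to cancel the first two terms. By hypothesis (\ref{f5}), this last integral is bounded below by $C_\sigma|u|_\sigma^\sigma$, and since $u\neq 0$ we obtain $\langle T'_\eps(u),u\rangle<0$. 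In particular $T'_\eps(u)\neq 0$, so $\N_\eps=T_\eps^{-1}(0)\setminus\{0\}$ is a $C^1$-submanifold of $H^s_\eps$ of codimension one.

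Now let $u\in\N_\eps$ be a critical point of the restriction $J_\eps\big|_{\N_\eps}$. By the Lagrange multiplier rule there exists $\lambda\in\R$ with
\[
J'_\eps(u) = \lambda\, T'_\eps(u) \quad \text{in } (H^s_\eps)^*.
\]
Testing this identity against $u$ itself, and recalling that $\langle J'_\eps(u),u\rangle=0$ because $u\in\N_\eps$, gives $0=\lambda\langle T'_\eps(u),u\rangle$. Since the factor $\langle T'_\eps(u),u\rangle$ is strictly negative by the computation above, we must have $\lambda=0$, and hence $J'_\eps(u)=0$ in $(H^s_\eps)^*$. This is the conclusion.

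There is no real obstacle here since the heavy lifting — the sign of $\langle T'_\eps(u),u\rangle$ on $\N_\eps$, which is where the auxiliary assumption (\ref{f5}) plays its role — is already contained in the proof of Proposition \ref{prop2.2}; the corollary merely packages the Lagrange multiplier step separately. I would just be careful to cite (\ref{f5}) explicitly so the reader sees why the strict inequality (as opposed to a non-strict one) holds, which is what forces $\lambda=0$.
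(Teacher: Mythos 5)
Your proof is correct and follows essentially the same approach as the paper: the key ingredient in both is the computation, already carried out in the proof of Proposition \ref{prop2.2}, that $\langle T'_\eps(u),u\rangle=-2\int_{\R^N}f'(|u|^2)|u|^4\,dx<0$ on $\N_\eps$ by (\ref{f5}), which forces any Lagrange multiplier to vanish. The paper packages this inside the Palais--Smale argument of Proposition \ref{prop2.2} and obtains the corollary as an immediate consequence, while you extract the pointwise Lagrange-multiplier step cleanly; the mathematics is identical.
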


Now we are ready the proof of the main result of this section.

\begin{proof}[Proof of Theorem \ref{thex}]
	By Lemma \ref{MPG} we know that $J_{\e}$ has a mountain pass geometry. So, by the Ekeland Variational Principle, there exists a $(PS)_{c_\eps}$ sequence $(u_{n})\subset \h$ for $J_{\e}$.\\
	If $V_{\infty}=\infty$, by Lemma \ref{embedding} and Proposition \ref{prop2.1} we deduce that $J_{\e}(u)=c_{\e}$ and $J'_{\e}(u)=0$, where $u\in \h$ is the weak limit of $u_{n}$.\\
	Now, we consider the case $V_{\infty}<\infty$. In view of Proposition \ref{prop2.1} it is enough to show that $c_{\e}<c_{V_{\infty}}$. Suppose without loss of generality that 
	$$
	V(0)=V_{0}=\inf_{x\in \R^{N}} V(x).
	$$
	Let $\mu\in  (V_{0}, V_{\infty})$. Clearly $c_{V_{0}}<c_{\mu}<c_{V_{\infty}}$. Let $w\in H^{s}(\R^{N}, \R)$ be a positive ground state to the autonomous problem \eqref{Plim} and $\eta\in C^{\infty}_{c}(\R^{N}, \R)$ be a cut-off function such that $\eta=1$ in $B_{1}(0)$ and $\eta=0$ in $B_{2}^{c}(0)$. Let us define $w_{r}(x):=\eta_{r}(x)w(x) e^{\imath A(0)\cdot x}$, with $\eta_r(x)=\eta(x/r)$ for $r>0$, and we observe that $|w_{r}|=\eta_{r}w$ and $w_{r}\in \h$ in view of Lemma \ref{aux}.
	Take $t_{r}>0$ such that 
	\begin{equation*}
	I_{\mu}(t_{r} |w_{r}|)=\max_{t\geq 0} I_{\mu}(t |w_{r}|)
	\end{equation*}
	Let us prove that there exists $r$ sufficiently large such that $I_{\mu}(t_{r}|w_{r}|)<c_{V_{\infty}}$.\\
	If by contradiction $I_{\mu}(t_{r}|w_{r}|)\geq c_{V_{\infty}}$ for any $r>0$, by using the fact that $|w_{r}|\rightarrow w$ in $H^{s}(\R^{N}, \R)$ as $r\rightarrow \infty$ (see \cite[Lemma 5]{PP}), we have $t_{r}\rightarrow 1$ and
	$$
	c_{V_{\infty}}\leq \liminf_{r\rightarrow \infty} I_{\mu}(t_{r}|w_{r}|)=I_{\mu}(w)=c_{\mu}
	$$
	which gives a contradiction since $c_{V_{\infty}}>c_{\mu}$.
	Hence, there exists $r>0$ such that
	\begin{align}\label{tv19}
	I_{\mu}(t_{r}|w_{r}|)=\max_{\tau\geq 0} I_{\mu}(\tau (t_{r} |w_{r}|)) \mbox{ and } I_{\mu}(t_{r}|w_{r}|)<c_{V_{\infty}}.
	\end{align}
	Now, we show that 
	\begin{equation}\label{limwr}
	\lim_{\e\rightarrow 0}[w_{r}]^{2}_{A_{\e}}=[\eta_{r}w]^{2}.
	\end{equation}
	Then we can see that
	\begin{align*}
	[w_{r}]_{A_\eps}^{2}
	&=\iint_{\R^{2N}} \frac{|e^{\imath A(0)\cdot x}\eta_{r}(x)w(x)-e^{\imath A_{\e}(\frac{x+y}{2})\cdot (x-y)}e^{\imath A(0)\cdot y} \eta_{r}(y)w(y)|^{2}}{|x-y|^{N+2s}} dx dy \nonumber \\
	&=[\eta_{r} w]^{2}
	+\iint_{\R^{2N}} \frac{\eta_{r}^2(y)w^2(y) |e^{\imath [A_{\e}(\frac{x+y}{2})-A(0)]\cdot (x-y)}-1|^{2}}{|x-y|^{N+2s}} dx dy\\
	&\quad+2\Re \iint_{\R^{2N}} \frac{(\eta_{r}(x)w(x)-\eta_{r}(y)w(y))\eta_{r}(y)w(y)(1-e^{-\imath [A_{\e}(\frac{x+y}{2})-A(0)]\cdot (x-y)})}{|x-y|^{N+2s}} dx dy \\
	&=: [\eta_{r} w]^{2}+X_{\e}+2Y_{\e}.
	\end{align*}
	Since 
	$|Y_{\e}|\leq [\eta_{r} w] \sqrt{X_{\e}}$, it s enough to show that	$X_{\e}\rightarrow 0$ as $\e\rightarrow 0$ to deduce that \eqref{limwr} holds.\\
	Observe that, for $0<\beta<\alpha/({1+\alpha-s})$, 
	\begin{equation}\label{Ye}
	\begin{split}
	X_{\e}
	&\leq \int_{\R^{N}} w^{2}(y) dy \int_{|x-y|\geq\e^{-\beta}} \frac{|e^{\imath [A_{\e}(\frac{x+y}{2})-A(0)]\cdot (x-y)}-1|^{2}}{|x-y|^{N+2s}} dx\\
	&+\int_{\R^{N}} w^{2}(y) dy  \int_{|x-y|<\e^{-\beta}} \frac{|e^{\imath [A_{\e}(\frac{x+y}{2})-A(0)]\cdot (x-y)}-1|^{2}}{|x-y|^{N+2s}} dx \\
	&=:X^{1}_{\e}+X^{2}_{\e}.
	\end{split}
	\end{equation}
	Since $|e^{\imath t}-1|^{2}\leq 4$ and recalling that $w\in H^{s}(\R^{N}, \R)$, we can observe that 
	\begin{equation}\label{Ye1}
	X_{\e}^{1}\leq C \int_{\R^{N}} w^{2}(y) dy \int_{\e^{-\beta}}^\infty \rho^{-1-2s} d\rho\leq C \e^{2\beta s} \rightarrow 0.
	\end{equation}
	Concerning $X^{2}_{\e}$, since $|e^{\imath t}-1|^{2}\leq t^{2}$ for all $t\in \R$, $A\in C^{0,\alpha}(\R^N,\R^N)$ for $\alpha\in(0,1]$, and $|x+y|^{2}\leq 2(|x-y|^{2}+4|y|^{2})$,	 we have
	\begin{equation}\label{Ye2}
	\begin{split}
	X^{2}_{\e}&
	\leq \int_{\R^{N}} w^{2}(y) dy  \int_{|x-y|<\e^{-\beta}} \frac{|A_{\e}\left(\frac{x+y}{2}\right)-A(0)|^{2} }{|x-y|^{N+2s-2}} dx \\
	&\leq C\e^{2\alpha} \int_{\R^{N}} w^{2}(y) dy  \int_{|x-y|<\e^{-\beta}} \frac{|x+y|^{2\alpha} }{|x-y|^{N+2s-2}} dx \\
	&\leq C\e^{2\alpha} \left(\int_{\R^{N}} w^{2}(y) dy  \int_{|x-y|<\e^{-\beta}} \frac{1 }{|x-y|^{N+2s-2-2\alpha}} dx\right.\\
	&\qquad\qquad+ \left. \int_{\R^{N}} |y|^{2\alpha} w^{2}(y) dy  \int_{|x-y|<\e^{-\beta}} \frac{1}{|x-y|^{N+2s-2}} dx\right) \\
	&=: C\e^{2\alpha} (X^{2, 1}_{\e}+X^{2, 2}_{\e}).
	\end{split}
	\end{equation}	
	Then
	\begin{equation}\label{Ye21}
	X^{2, 1}_{\e}
	= C  \int_{\R^{N}} w^{2}(y) dy \int_0^{\e^{-\beta}} \rho^{1+2\alpha-2s} d\rho
	\leq C\eps^{-2\beta(1+\alpha-s)}.
	\end{equation}
	On the other hand, using Remark \ref{remdecay}, we infer that
	\begin{equation}\label{Ye22}
	\begin{split}
	 X^{2, 2}_{\e}
	 &\leq C  \int_{\R^{N}} |y|^{2\alpha} w^{2}(y) dy \int_0^{\e^{-\beta}}\rho^{1-2s} d\rho  \\
	&\leq C \e^{-2\beta(1-s)} \left[\int_{B_1(0)}  w^{2}(y) dy + \int_{B_1^c(0)} \frac{1}{|y|^{2(N+2s)-2\alpha}} dy \right]  \\
	&\leq C \e^{-2\beta(1-s)}.
	\end{split}
	\end{equation}
	Taking into account \eqref{Ye}, \eqref{Ye1}, \eqref{Ye2}, \eqref{Ye21} and \eqref{Ye22} we can conclude that $X_{\e}\rightarrow 0$.\\
	Now, in view of \eqref{condV}, there exists  $\e_{0}>0$ such that 
	\begin{equation}\label{tv20}
	V(\e x)\leq \mu \mbox{ for all } x\in \supp(|w_{r}|), \e\in (0, \e_{0}).
	\end{equation} 
	Therefore, putting together \eqref{tv19} , \eqref{limwr} and \eqref{tv20}, 
	we deduce that
	$$
	\limsup_{\e\rightarrow 0}c_{\e}\leq \limsup_{\e\rightarrow 0}\left[\max_{\tau\geq 0} J_{\e}(\tau t_{r} w_{r})\right]\leq \max_{\tau\geq 0} I_{\mu}(\tau t_{r} |w_{r}|)=I_{\mu}(t_{r}|w_{r}|)<c_{V_{\infty}}
	$$ 
	which implies that $c_{\e}<c_{V_{\infty}}$ for any $\e>0$ sufficiently small.
\end{proof}

\section{Proof of Theorem \ref{thmf}}\label{sec4}
In this section, our main purpose is to apply the Ljusternik-Schnirelmann category theory to prove a multiplicity result for problem \eqref{Pe}. In order to achieve our main result, first we give some useful preliminary lemmas.\\
Let $\delta>0$ be fixed and $\omega\in H^s(\R^N, \R)$ be a ground state solution of the problem \eqref{Plim} for $\mu=V_0$ given by Lemma \ref{lem4.3} (see also Remark \ref{remdecay}).

Moreover let $\psi\in C^{\infty}(\R^{+}, [0, 1])$ be a nonincreasing function such that $\psi=1$ in $[0, \delta/2]$ and $\psi=0$ in $[\delta, \infty)$ and, for any fixed $y\in M$, let us introduce
$$
\Psi_{\e, y}(x):=\psi(|\e x-y|) \omega\left(\frac{\e x-y}{\e}\right)e^{\imath \tau_{y}(\frac{\e x-y}{\e})}
$$
where $M$ is defined in \eqref{defM} and $\tau_{y}(x):=\sum_{j=1}^{N} A_{j}(y)x_{j}$.\\
By Lemma \ref{LemNeharyE} let $t_{\e}>0$ be the unique positive number such that
$$
J_{\e}(t_{\e}\Psi_{\e, y})=\max_{t\geq 0} J_{\e}(t_{\e}\Psi_{\e, y})
$$
and let us introduce the map $\Phi_{\e}:M\rightarrow \N_{\e}$ by setting $\Phi_{\e}(y)=t_{\e} \Psi_{\e, y}$. By construction, $\Phi_{\e}(y)$ has compact support for any $y\in M$.\\
We begin proving the following result.
\begin{Lem}\label{ADlem}
As $\eps\to 0$ we have that $\|\Psi_{\e, y}\|_\eps^2 \to \|\omega\|_{V_0}^2$ uniformly with respect to $y\in M$. 
\end{Lem}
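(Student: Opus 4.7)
The plan is a rescaling to eliminate the $y$--dependence of the profile, followed by an $\eps$--splitting of the magnetic error exactly as in the proof of Theorem \ref{thex}. Setting $z = (\eps x - y)/\eps$ gives $\Psi_{\e,y}(z+y/\eps) = \psi(\eps|z|)\,\omega(z)\,e^{\imath\tau_y(z)}$, so $\|\Psi_{\e,y}\|^2_\eps$ splits as $[\Psi_{\e,y}]^2_{A_\eps}$ plus the potential term
\[
\cP_\eps(y) := \int_{\R^N} V(\eps z + y)\,\psi^2(\eps|z|)\,\omega^2(z)\, dz,
\]
and the target is $[\omega]^2 + V_0|\omega|_2^2$ (recall $\omega$ is real valued). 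A preliminary observation is that $M$ is \emph{compact}: it is closed as a level set of a continuous function, and by \eqref{condV} one has $V(x) > V_0$ for $|x|$ large, so $M$ is bounded.

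For the potential term, uniform continuity of $V$ on bounded sets combined with compactness of $M$ gives $V(\eps z + y) \to V_0$ uniformly on $\{y \in M,\, |z|\leq R\}$. Since the decay $\omega(z) \leq C|z|^{-(N+2s)}$ for $|z|>1$ from Remark \ref{remdecay} ensures that $V_\infty\omega^2$ is an integrable majorant, dominated convergence yields $\cP_\eps(y) \to V_0|\omega|_2^2$ uniformly in $y \in M$.

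For the magnetic seminorm, the key algebraic step is to absorb the gauge factor using $\tau_y(w) - \tau_y(z) = A(y)\cdot(w-z)$: after the substitution,
\[
[\Psi_{\e,y}]^2_{A_\eps} = \frac{c_{N,s}}{2}\iint_{\R^{2N}} \frac{|h_\eps(z) - e^{\imath\Phi_\eps(z,w,y)}\, h_\eps(w)|^2}{|z-w|^{N+2s}}\,dz\,dw,
\]
with $h_\eps(z) := \psi(\eps|z|)\omega(z)$ and $\Phi_\eps(z,w,y) := (z-w)\cdot[A(\eps\tfrac{z+w}{2}+y) - A(y)]$. Using $|a - e^{\imath\phi}b|^2 \leq (1+\eta)|a-b|^2 + (1+\eta^{-1})|b|^2|e^{\imath\phi}-1|^2$ together with its symmetric lower variant, $[\Psi_{\e,y}]^2_{A_\eps}$ is sandwiched between $(1\pm\eta)[h_\eps]^2$ plus the error
\[
R_\eps(y) := \iint_{\R^{2N}} \frac{h_\eps(w)^2\,|e^{\imath\Phi_\eps(z,w,y)}-1|^2}{|z-w|^{N+2s}}\, dz\,dw.
\]
The principal term $[h_\eps]^2$ is $y$--independent and converges to $[\omega]^2$ by the standard radial truncation argument (cf.\ \cite[Lemma 5]{PP}). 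The error $R_\eps(y)$ is controlled by copying the treatment of $X_\eps$ in the proof of Theorem \ref{thex}: split at $|z-w| = \eps^{-\beta}$ with $0 < \beta < \alpha/(1+\alpha-s)$; on the far region use $|e^{\imath t}-1|^2 \leq 4$ to get $O(\eps^{2\beta s})|\omega|_2^2$, on the near region use $|e^{\imath t}-1|^2 \leq t^2$ together with $|A(\eps\tfrac{z+w}{2}+y) - A(y)| \leq [A]_{C^{0,\alpha}}\,\eps^\alpha|\tfrac{z+w}{2}|^\alpha$ and the decay of $\omega$, obtaining $O(\eps^{2\alpha - 2\beta(1+\alpha-s)})$.

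The main obstacle is ensuring that these bounds are \emph{uniform} in $y \in M$. This works precisely because compactness of $M$ makes $|y|$ uniformly bounded (so it can be absorbed into constants when expanding $|\eps\tfrac{z+w}{2}+y|^\alpha$), and because the Hölder seminorm $[A]_{C^{0,\alpha}}$ is a global constant independent of $y$. Letting first $\eps\to 0$ and then $\eta\to 0$ in the sandwich estimate gives $[\Psi_{\e,y}]^2_{A_\eps} \to [\omega]^2$ uniformly in $y\in M$, and combining with the potential estimate concludes the lemma.
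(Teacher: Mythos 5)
Your proposal is correct and follows essentially the same route as the paper: rescale via $z=(\eps x - y)/\eps$, absorb the gauge factor using $\tau_y(z_1)-\tau_y(z_2)=A(y)\cdot(z_1-z_2)$, reduce to the $y$-independent seminorm $[\psi(\eps|\cdot|)\omega]^2\to[\omega]^2$ plus an error controlled by the $\eps^{-\beta}$ split, the H\"older bound on $A$, and the $|z|^{-(N+2s)}$ decay of $\omega$. The only cosmetic difference is that you bound the cross term via a Young-type sandwich $(1\pm\eta)$ and then let $\eta\to0$, whereas the paper expands exactly and controls the cross term $Z_\eps$ by Cauchy--Schwarz, $|Z_\eps|\leq\sqrt{X_\eps}\sqrt{Y_\eps}$, avoiding the auxiliary parameter; your explicit remark on compactness of $M$ (needed for the dominating function in the potential term) is a point the paper leaves implicit.
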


\begin{proof}
By applying the Dominated Convergence Theorem we easily have that
\[
\int_{\R^{N}} V(\eps x) |\Psi_{\e, y}(x)|^2dx\to V_{0} \int_{\R^{N}}  \omega^2(x) dx.
\]
Thus, we only need to prove that as $\e\rightarrow 0$
\begin{equation*}
\iint_{\R^{2N}} \frac{|\Psi_{\e, y}(x_{1})-\Psi_{\e, y}(x_{2})e^{\imath (x_{1}-x_{2})\cdot A_{\e}(\frac{x_{1}+x_{2}}{2})}|^{2}}{|x_{1}-x_{2}|^{N+2s}}dx_{1}dx_{2}\rightarrow \iint_{\R^{2N}} \frac{|\omega(x_{1})-\omega(x_{2})|^{2}}{|x_{1}-x_{2}|^{N+2s}} dx_{1}dx_{2}.
\end{equation*}
By using the change of variable $\e x_{i}-y=\e z_{i}$ $(i=1, 2)$, we obtain
	\begin{align*}
	&\iint_{\R^{2N}} \frac{|\Psi_{\e, y}(x_{1})-\Psi_{\e, y}(x_{2})e^{\imath (x_{1}-x_{2})\cdot A_{\e}(\frac{x_{1}+x_{2}}{2})}|^{2}}{|x_{1}-x_{2}|^{N+2s}}dx_{1}dx_{2} \\
	&=\iint_{\R^{2N}} \frac{|\psi(|\e z_{1}|)\omega(z_{1})e^{\imath \tau_{y}(z_{1})}-\psi(|\e z_{2}|) \omega(z_{2}) e^{\imath \tau_{y}(z_{2})} e^{\imath (z_{1}-z_{2})\cdot A(\e \frac{z_{1}+z_{2}}{2}+y)}|^{2}}{|z_{1}-z_{2}|^{N+2s}}dz_{1}dz_{2} \\
	&=\iint_{\R^{2N}} \frac{|\psi(|\e z_{1}|)\omega(z_{1})-\psi(|\e z_{2}|)\omega(z_{2})|^{2}}{|z_{1}-z_{2}|^{N+2s}} dz_{1}dz_{2} \\
	&+2\iint_{\R^{2N}} \frac{\psi^2(|\e z_{2}|)\omega^2(z_{2}) \Bigl( 1- \cos \left\{(z_{1}-z_{2})\cdot  [A(\e(\frac{z_{1}+z_{2}}{2})+y)-A(y)] \right\}  \Bigr) }{|z_{1}-z_{2}|^{N+2s}} dz_{1}dz_{2} \\
	&+ 2\Re \iint_{\R^{2N}} \frac{[\psi(|\e z_{1}|)\omega(z_{1})-\psi(|\e z_{2}|)\omega(z_{2})]  \psi(|\e z_{2}|)\omega(z_{2}) \left[1-e^{\imath (z_{2}-z_{1})\cdot [A(\e(\frac{z_{1}+z_{2}}{2})+y)-A(y)]} \right]}{|z_{1}-z_{2}|^{N+2s}} dz_{1}dz_{2}\\
	&
	:=X_{\e}+Y_{\e}+2Z_{\e}\\
	\end{align*}
	Since $\psi(|x|)=1$ for $x\in B_{\delta/2}$, we can use \cite[Lemma $5$]{PP} to get
	$$
	X_{\e}=\iint_{\R^{2N}} \frac{|\psi(|\e z_{1}|)\omega(z_{1})-\psi(|\e z_{2}|)\omega(z_{2})|^{2}}{|z_{1}-z_{2}|^{N+2s}} dz_{1}dz_{2}\rightarrow \iint_{\R^{2N}} \frac{|\omega(z_{1})-\omega(z_{2})|^{2}}{|z_{1}-z_{2}|^{N+2s}} dz_{1}dz_{2}
	$$
	as $\e\rightarrow 0$.\\
	On the other hand,
by the H\"older inequality we can see that
	$$
	|Z_{\e}|\leq \sqrt{X_{\e}} \sqrt{Y_{\e}}.
	$$
	Therefore, it is enough to show that $Y_{\e}\rightarrow 0$ as $\e\rightarrow 0$.\\
%
	Being $\psi=0$ in $B_{\delta}^c(0)$, we have
	\begin{equation}\label{Be}
	\begin{split}
	Y_{\e}
	&=2\int_{B_{\delta/\eps}(0)} \psi^{2}(|\e z_{2}|) \omega^{2}(z_{2}) dz_{2}  \left\{ \int_{|z_{1}-z_{2}|<\e^{-\beta}} \frac{{1-\cos \left\{(z_{1}-z_{2})\cdot  [A(\e(\frac{z_{1}+z_{2}}{2})+y)-A(y)]\right\} }}{|z_{1}-z_{2}|^{N+2s}}  dz_{1}\right.\\
	&+\left.\int_{|z_{1}-z_{2}|\geq\e^{-\beta}} \frac{{1-\cos \left\{(z_{1}-z_{2})\cdot  [A(\e(\frac{z_{1}+z_{2}}{2})+y)-A(y)]\right\} }}{|z_{1}-z_{2}|^{N+2s}}  dz_{1}\right\}:=Y_{\e}^{1}+Y_{\e}^{2},
	\end{split}
	\end{equation}
	where $0<\beta<\frac{\alpha}{1+\alpha-s}$.\\
	Taking into account that $|z_{1}+z_{2}|^{2\alpha}\leq C(|z_{1}-z_{2}|^{2\alpha}+|z_{2}|^{2\alpha})$ for any $z_{1}, z_{2}\in \R^N$, $2(1-\cos t)\leq t^2$ in $\R$, the assumptions on $A$, and recalling that $0\leq \psi\leq 1$ we can see that 
\begin{equation}\label{Be1}
\begin{split}
	Y_{\e}^{1}
&\leq C\e^{2\alpha} \int_{B_{\delta/\eps}(0)} \omega^{2}(z_{2}) dz_{2}  \Bigl\{\int_{|z_{1}-z_{2}|<\e^{-\beta}} \frac{dz_{1}}{|z_{1}-z_{2}|^{N+2s-2-2\alpha}} 
+\int_{|z_{1}-z_{2}|<\e^{-\beta}} \frac{|z_{2}|^{2\alpha}}{|z_{1}-z_{2}|^{N+2s-2}}  dz_{1} \Bigr\}\\
&=: C\e^{2\alpha} [Y_{\e}^{1,1}+Y_{\e}^{1, 2}].
\end{split}
\end{equation}
	We have
	\begin{align}\label{Be2}
	Y_{\e}^{1, 1}
	&\leq C
	 \int_{\R^{N}} \omega^{2}(z_{2}) dz_{2} \int_{0}^{\e^{-\beta}} \rho^{1+2\alpha - 2s} d\rho
	 = C \e^{-2\beta(1+\alpha-s)} 
	\end{align}
	and, taking into account  Remark \ref{remdecay} and that $N\geq 3$,
\begin{equation}\label{Be3}
\begin{split}
Y_{\e}^{1, 2}
&\leq C \int_{\R^{N}} |z_{2}|^{2\alpha}\omega^2(z_{2})  dz_{2} \int_0^{\e^{-\beta}} \rho^{1-2s} d\rho\\
&\leq C \e^{-2\beta(1-s)} \left[\int_{|z_{2}|>1} \frac{1}{|z_{2}|^{2(N+2s)-2\alpha}}dz_{2} +\int_{|z_{2}|<1} \omega(z_{2})^{2}dz_{2}\right]\\
&\leq C \e^{-2\beta(1-s)}
\end{split}
\end{equation}
	Putting together \eqref{Be1}, \eqref{Be2} and \eqref{Be3} we can infer that 
	\begin{equation}\label{Be4}
	Y^{1}_{\e}\rightarrow 0 \mbox{ as } \e\rightarrow 0.
	\end{equation}
	Finally, using the facts $0\leq \psi\leq 1$ and $0\leq 1-\cos t \leq 1$ in $\R$, we have
	\begin{equation}\label{Be5}
	Y_{\e}^{2}
	\leq  C\int_{\R^{N}} \omega^2(z_{2}) dz_{2} \int_{\e^{-\beta}}^{\infty} \frac{1}{\rho^{2s+1}} d\rho
	\leq C \e^{2s\beta}.
	\end{equation}
Taking into account \eqref{Be},\eqref{Be4} and \eqref{Be5} we can conclude.
\end{proof}

The next result will be very useful to define a map from $M$ to a suitable sub level in the Nehari  manifold.
\begin{Lem}\label{lem4.1}
	The functional $\Phi_{\e}$ satisfies the following limit
	\begin{equation*}
	\lim_{\e\rightarrow 0} J_{\e}(\Phi_{\e}(y))=c_{V_{0}} \mbox{ uniformly in } y\in M.
	\end{equation*}
\end{Lem}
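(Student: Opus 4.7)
I would prove this by a contradiction/compactness argument. Suppose, on the contrary, that there exist $\delta_0>0$, sequences $\eps_n\to 0^+$ and $y_n\in M$ such that
\[
|J_{\eps_n}(\Phi_{\eps_n}(y_n))-c_{V_0}|\geq \delta_0.
\]
By \eqref{condV} the set $M$ is bounded (hence compact, by continuity of $V$), so up to a subsequence $y_n\to y^\ast\in M$, and in particular $V(\eps_n z+y_n)\to V_0$ uniformly on compact subsets of $\R^N$.

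The first key step is the change of variable $z=x-y_n/\eps_n$. Since the phase $e^{\imath \tau_y}$ and the complex exponential in the magnetic kernel do not affect pointwise magnitudes, we have $|\Psi_{\eps_n,y_n}(x)|^2=\psi^2(|\eps_n z|)\omega^2(z)$, so
\[
\int_{\R^N}F(|t\Psi_{\eps_n,y_n}|^2)\,dx=\int_{\R^N}F\bigl(t^2\psi^2(|\eps_n z|)\omega^2(z)\bigr)\,dz,
\]
and analogously for $\int f(|t\Psi|^2)|t\Psi|^2\,dx$. In particular these integrals are independent of $y_n$, while $\|\Psi_{\eps_n,y_n}\|_{\eps_n}^2\to\|\omega\|_{V_0}^2$ by Lemma \ref{ADlem}.

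Next I would show that $t_{\eps_n}$ is bounded and bounded away from zero. The Nehari identity $\langle J'_{\eps_n}(t_{\eps_n}\Psi_{\eps_n,y_n}),t_{\eps_n}\Psi_{\eps_n,y_n}\rangle=0$ rewrites as
\[
\|\Psi_{\eps_n,y_n}\|_{\eps_n}^2=\int_{\R^N} f\bigl(t_{\eps_n}^2\psi^2(|\eps_n z|)\omega^2(z)\bigr)\,\psi^2(|\eps_n z|)\omega^2(z)\,dz.
\]
If $t_{\eps_n}\to 0$, then by (\ref{f2})--(\ref{f3}) the right-hand side vanishes, contradicting $\|\omega\|_{V_0}^2>0$. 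If $t_{\eps_n}\to\infty$, the consequence of (\ref{f4}) from (\ref{propf2}) in Lemma \ref{propf}, namely $F(\tau^2)\geq C_1|\tau|^\theta-C_2$ together with $f(\tau)\tau\geq\frac{\theta}{2}F(\tau)$, forces the integrand to go to $+\infty$ on the positive-measure set $\{\omega>0\}$, and Fatou's lemma yields a contradiction with the bounded left-hand side. Hence, up to a further subsequence, $t_{\eps_n}\to t_0\in(0,\infty)$.

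Finally I would pass to the limit. Using (\ref{f3}) one obtains $f(\tau^2)\tau^2\leq \xi\tau^2+C_\xi|\tau|^q$, which together with $\omega\in L^2\cap L^q$ (Lemma \ref{embedding}) provides a dominating function for applying the Dominated Convergence Theorem in the Nehari identity, giving
\[
t_0^2\|\omega\|_{V_0}^2=\int_{\R^N} f(t_0^2\omega^2)\,t_0^2\omega^2\,dz,
\]
i.e.\ $t_0\omega\in\mathcal{M}_{V_0}$. Since $\omega\in\mathcal{M}_{V_0}$ already, the uniqueness statement in the Remark following Lemma \ref{LemNeharyE} forces $t_0=1$. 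Then, by the same dominated-convergence reasoning applied to $F$,
\[
J_{\eps_n}(\Phi_{\eps_n}(y_n))=\frac{t_{\eps_n}^2}{2}\|\Psi_{\eps_n,y_n}\|_{\eps_n}^2-\frac{1}{2}\int_{\R^N}F\bigl(t_{\eps_n}^2\psi^2(|\eps_n z|)\omega^2(z)\bigr)\,dz\longrightarrow\frac{1}{2}\|\omega\|_{V_0}^2-\frac{1}{2}\int_{\R^N}F(\omega^2)\,dz=I_{V_0}(\omega)=c_{V_0},
\]
contradicting the standing assumption.

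The principal obstacle is the two-sided control of $t_{\eps_n}$, and in particular ruling out $t_{\eps_n}\to\infty$: this requires (\ref{f4}) to produce superquadratic growth of $\tau\mapsto f(\tau)\tau$, which then beats the quadratic behavior of the left-hand side of the Nehari identity. The $y$-independence of the nonlinear terms after the change of variable makes the uniformity in $y\in M$ automatic once $V(\eps_n z+y_n)\to V_0$ is controlled, which is ensured by the compactness of $M$.
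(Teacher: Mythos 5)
Your proposal is correct and follows essentially the same route as the paper: argue by contradiction, invoke Lemma \ref{ADlem} for the uniform convergence of $\|\Psi_{\eps,y}\|_\eps^2$, use the Nehari identity after the change of variable to show $t_{\eps_n}$ is bounded away from $0$ and $\infty$ via the superquadratic growth coming from (\ref{f4}), then identify $t_0=1$ by uniqueness on $\mathcal{M}_{V_0}$ and pass to the limit in $J_{\eps_n}$. The only cosmetic difference is that the paper rules out $t_{\eps_n}\to\infty$ by bounding $f$ from below using $\min_{B_{\delta/2}}\omega>0$, while you use Fatou on the positive-measure set $\{\omega>0\}$; both hinge on the same growth estimate for $f$.
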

\begin{proof}
	Assume by contradiction that there there exists $\kappa>0$, $(y_{n})\subset M$ and $\e_{n}\rightarrow 0$ such that 
	\begin{equation*}
	|J_{\e_{n}}(\Phi_{\e_{n}}(y_{n}))-c_{V_{0}}|\geq \kappa.
	\end{equation*}
	Since $\langle J'_{\e_{n}}(\Phi_{\e_{n}}(y_{n})), \Phi_{\e_{n}}(y_{n})\rangle=0$ and using the change of variable $z=(\e_{n}x-y_{n})/{\e_{n}}$, (\ref{f5}), and that, if $z\in B_{\delta/\e_{n}}(0)$, then $\e_{n} z+y_{n}\in B_{\delta}(y_{n})\subset M_{\delta}$, we can see that
	\begin{equation*}
	\begin{split}
	\|\Psi_{\e_{n}, y_{n}}\|^{2}_{\e_{n}}
	&=\int_{\R^{N}} f(|t_{\e_{n}}\Psi_{\e_{n}}|^{2}) |\Psi_{\e_{n}}|^{2} dx\\
	&=\int_{\R^{N}} f(|t_{\e_{n}} \psi(|\e_{n}z|) \omega(z)|^{2}) |\psi(|\e_{n}z|) \omega(z)|^{2}  dz\\
	&\geq \int_{B_{{\delta}/{2}}(0)} f(|t_{\e_{n}} \omega(z)|^{2}) \omega^{2}(z)  dz\\
	&\geq f(|t_{n}\alpha|^{2})  \int_{B_{{\delta}/{2}}(0)}\omega^{2}(z)  dz
	\end{split}
	\end{equation*}
	for all $n\geq n_{0}$, with  $n_{0}\in \mathbb{N}$ such that $B_{\frac{\delta}{2}}(0)\subset B_{\frac{\delta}{2\e_{n}}}(0)$ and $\alpha=\min\{\omega(z): |z|\leq \frac{\delta}{2}\}$.\\
	Hence, if $t_{\e_{n}}\rightarrow \infty$, by (\ref{f4})
	we deduce that $\|\Psi_{\e_{n}, y_{n}}\|^{2}\rightarrow \infty$ which contradicts Lemma \ref{ADlem}.\\
	Therefore, up to a subsequence, we may assume that $t_{\e_{n}}\rightarrow t_{0}\geq 0$.
	In fact, taking into account Lemma \ref{ADlem} and passing to the limit as $n\to\infty$ in
	\[
	\|\Psi_{\e_{n}, y_{n}}\|^{2}_{\e_{n}}
	=\int_{\R^{N}} f(|t_{\e_{n}} \psi(|\e_{n}z|) \omega(z)|^{2}) |\psi(|\e_{n}z|) \omega(z)|^{2}  dz
	\]
	it is easy to check that $t_{0}>0$.\\
	Moreover
	$$
	[t_{0}\omega]^{2}+\int_{\R^{N}} V_{0}|t_{0}\omega|^{2} dx=\int_{\R^{N}} f(|t_{0}\omega|^{2}) t_{0}^2\omega^{2},
	$$ 
	that is $t_{0}\omega\in \mathcal{M}_{V_0}$. Since $\omega\in  \mathcal{M}_{V_0}$ we get that $t_{0}=1$.\\
	Then
	\begin{equation*}
	\lim_{n}\int_{\R^{N}} F(|\Phi_{\e_{n}}(y_{n})|^{2})=\int_{\R^{N}} F(\omega^{2}).
	\end{equation*}
	and so
	$$
	\lim_{n} J_{\e_{n}}(\Phi_{\e_{n}}(y_{n}))=I_{V_{0}}(\omega)=c_{V_{0}}
	$$
	which gives a contradiction.
\end{proof}

Now, we are in the position to define the barycenter map. We take $\rho>0$ such that $M_{\delta}\subset B_{\rho}$ and we consider $\Upsilon: \R^{N}\rightarrow \R^{N}$ defined by setting
\begin{equation*}
\Upsilon(x)=
\left\{
\begin{array}{ll}
x &\mbox{ if } |x|<\rho \\
\rho x/{|x|} &\mbox{ if } |x|\geq \rho.
\end{array}
\right.
\end{equation*}
We define the barycenter map $\beta_{\e}: \N_{\e}\rightarrow \R^{N}$ as follows
\[
\beta_{\e}(u):=\frac{\displaystyle\int_{\R^{N}} \Upsilon(\e x) |u(x)|^{2} dx}{\displaystyle\int_{\R^{N}} |u(x)|^{2} dx}.
\]

\begin{Lem}\label{lem4.2}
	The function $\Phi_{\e}$ verifies the following limit
	\begin{equation*}
	\lim_{\e \rightarrow 0} \beta_{\e}(\Phi_{\e}(y))=y \mbox{ uniformly in } y\in M.
	\end{equation*}
\end{Lem}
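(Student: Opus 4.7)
\emph{Proof proposal.} The plan is to turn the ratio defining $\beta_{\e}(\Phi_{\e}(y))$ into an explicit $\e$-dependent integral that is easy to control by dominated convergence, and then to upgrade pointwise convergence in $y$ to uniform convergence via compactness of $M$.

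\emph{Step 1 (change of variable).} Since $|\Phi_{\e}(y)(x)|^{2}=t_{\e}^{2}\,\psi(|\e x-y|)^{2}\,\omega\!\left(\frac{\e x-y}{\e}\right)^{2}$, the factor $t_{\e}^{2}$ cancels from numerator and denominator of $\beta_{\e}$. Substituting $z=(\e x-y)/\e$ I would obtain
\begin{equation*}
\beta_{\e}(\Phi_{\e}(y)) - y
=
\frac{\displaystyle\int_{\R^{N}} \bigl[\Upsilon(\e z+y)-y\bigr]\,\psi(|\e z|)^{2}\,\omega(z)^{2}\,dz}
     {\displaystyle\int_{\R^{N}} \psi(|\e z|)^{2}\,\omega(z)^{2}\,dz}.
\end{equation*}
This identity is where the Hölder decay of $A$ and the form of $\tau_{y}$ play no role: the modulus $|\Psi_{\e,y}|$ depends only on $\psi$ and $\omega$.

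\emph{Step 2 (contradiction + compactness).} Assume, towards a contradiction, that there exist $\kappa>0$, $\e_{n}\to 0$ and $y_{n}\in M$ with $|\beta_{\e_{n}}(\Phi_{\e_{n}}(y_{n}))-y_{n}|\geq \kappa$. By \eqref{condV} the set $M$ is closed and bounded (since $\liminf_{|x|\to\infty}V(x)>V_{0}$ forces $V(x)>V_{0}$ outside some ball), hence compact, so along a subsequence $y_{n}\to y^{*}\in M$. By the choice $M_{\delta}\subset B_{\rho}(0)$ we have $|y^{*}|<\rho$, so $\Upsilon$ is the identity in a neighbourhood of $y^{*}$; in particular $\Upsilon(\e_{n}z+y_{n})\to y^{*}$ pointwise in $z$, and of course $y_{n}\to y^{*}$ and $\psi(|\e_{n}z|)^{2}\to 1$.

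\emph{Step 3 (dominated convergence).} The integrand in the numerator is bounded pointwise by $2\rho\,\omega(z)^{2}\in L^{1}(\R^{N})$ (as $\Upsilon$ takes values in $\overline{B_{\rho}(0)}$ and $|y_{n}|<\rho$), and converges pointwise to $0$; similarly the denominator's integrand is dominated by $\omega(z)^{2}$ and converges pointwise to $\omega(z)^{2}$. Hence the numerator tends to $0$ and the denominator tends to $|\omega|_{2}^{2}>0$, so $\beta_{\e_{n}}(\Phi_{\e_{n}}(y_{n}))-y_{n}\to 0$, contradicting $|\beta_{\e_{n}}(\Phi_{\e_{n}}(y_{n}))-y_{n}|\geq \kappa$.

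\emph{Main obstacle.} The only nontrivial point is the uniformity in $y\in M$; the individual limit at fixed $y$ is a one-line consequence of dominated convergence. Uniformity is obtained essentially for free once one observes that $M$ is compact under \eqref{condV} and that $M\subset B_{\rho}(0)$ strictly, so $\Upsilon$ is continuous (in fact the identity) on a neighbourhood of any limit point $y^{*}$; no use of the phase $e^{\imath\tau_{y}}$, of $t_{\e}$, or of the fractional seminorm is needed in this lemma, in contrast with Lemma \ref{lem4.1}.
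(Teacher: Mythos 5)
Your proof is correct and follows essentially the same route as the paper: both pass to a contradiction with sequences $\e_n\to 0$, $y_n\in M$, both exploit that $t_\e$ cancels in $\beta_\e$, perform the change of variable $z=(\e x-y)/\e$, and conclude by dominated convergence using $M\subset M_\delta\subset B_\rho$ so that $\Upsilon$ acts as the identity and the integrand is bounded by $2\rho\,\omega^2$. Extracting $y_n\to y^{*}$ via compactness of $M$ is a harmless extra step (the paper gets pointwise convergence of $\Upsilon(\e_n z+y_n)-y_n=\e_n z\to 0$ directly from $M_\delta\subset B_\rho$, without passing to a convergent subsequence), but the argument is the same in substance.
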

\begin{proof}
	Suppose by contradiction that there exists $\kappa>0$, $(y_{n})\subset M$ and $\e_{n}\rightarrow 0$ such that 
	\begin{equation}\label{4.4}
	|\beta_{\e_{n}}(\Phi_{\e_{n}}(y_{n}))-y_{n}|\geq \kappa.
	\end{equation}
	Using the change of variable $z= ({\e_{n} x-y_{n}})/{\e_{n}}$, we can see that 
	$$
	\beta_{\e_{n}}(\Psi_{\e_{n}}(y_{n}))=y_{n}+\frac{\int_{\R^{N}}[\Upsilon(\e_{n}z+y_{n})-y_{n}] |\psi(|\e_{n}z|)|^{2} |\omega(z)|^{2} \, dz}{\int_{\R^{N}} |\psi(|\e_{n}z|)|^{2} |\omega(z)|^{2}\, dz}.
	$$
	Taking into account $(y_{n})\subset M\subset M_\delta \subset B_{\rho}$ and the Dominated Convergence Theorem, we can infer that 
	$$
	|\beta_{\e_{n}}(\Phi_{\e_{n}}(y_{n}))-y_{n}|=o_{n}(1)
	$$
	which contradicts (\ref{4.4}).
\end{proof}

Next, we prove the following useful compactness result.
\begin{Prop}\label{prop4.1}
	Let $\e_{n}\rightarrow 0^{+}$ and $(u_{n})\subset \N_{\e_{n}}$ be such that $J_{\e_{n}}(u_{n})\rightarrow c_{V_{0}}$. Then there exists $(\tilde{y}_{n})\subset \R^{N}$ such that the translated sequence 
	\begin{equation*}
	v_{n}(x):=|u_{n}|(x+ \tilde{y}_{n})
	\end{equation*}
	has a subsequence which converges in $H^{s}(\R^{N}, \R)$. Moreover, up to a subsequence, $(y_{n}):=(\e_{n}\tilde{y}_{n})$ is such that $y_{n}\rightarrow y\in M$. 
\end{Prop}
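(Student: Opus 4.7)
The plan is to find translations making $|u_n|$ strongly convergent in $H^s(\mathbb{R}^N,\mathbb{R})$ and then identify the concentration point. First I would show $(u_n)$ is bounded in $\|\cdot\|_{\eps_n}$ by combining $u_n\in\mathcal{N}_{\eps_n}$, $J_{\eps_n}(u_n)\to c_{V_0}$ and (\ref{f4}); the diamagnetic inequality \eqref{eqDI} transfers the bound to $(|u_n|)$ in $H^s(\mathbb{R}^N,\mathbb{R})$. To rule out vanishing, assume $\sup_y\int_{B_R(y)}|u_n|^2\,dx\to 0$: then Lemma \ref{Lions} gives $|u_n|\to 0$ in $L^q$, and the Nehari identity $\|u_n\|_{\eps_n}^2=\int f(|u_n|^2)|u_n|^2\,dx$ together with Lemma \ref{propf}(\ref{propf1}) forces $\|u_n\|_{\eps_n}\to 0$, contradicting Lemma \ref{LemNeharyE}(\ref{2.1}). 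Hence there exist $(\tilde y_n)\subset\mathbb{R}^N$ and $R,\beta>0$ with $\liminf_n\int_{B_R(\tilde y_n)}|u_n|^2\,dx\geq\beta$, and $v_n(x):=|u_n|(x+\tilde y_n)$ is bounded in $H^s(\mathbb{R}^N,\mathbb{R})$ with a weak limit $v\neq 0$.

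Next I would use the limit problem at level $V_0$ to upgrade this to strong convergence. For each large $n$ let $t_n>0$ be the unique number with $t_nv_n\in\mathcal{M}_{V_0}$. The Nehari identity for $u_n$ combined with the diamagnetic inequality and $V\geq V_0$ gives
$$\|v_n\|_{V_0}^2\leq\|u_n\|_{\eps_n}^2=\int_{\mathbb{R}^N} f(v_n^2)v_n^2\,dx,$$
while $t_nv_n\in\mathcal{M}_{V_0}$ reads $\|v_n\|_{V_0}^2=\int f(t_n^2v_n^2)v_n^2\,dx$; monotonicity of $f$ from (\ref{f5}) forces $t_n\leq 1$. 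Using diamagnetic once more and that $u_n$ maximises $J_{\eps_n}$ on its ray (Lemma \ref{LemNeharyE}(\ref{2.2})),
$$c_{V_0}\leq I_{V_0}(t_nv_n)\leq J_{\eps_n}(t_nu_n)\leq J_{\eps_n}(u_n)=c_{V_0}+o_n(1),$$
so $(t_nv_n)$ is minimising for $I_{V_0}$ on $\mathcal{M}_{V_0}$. The analogue of Lemma \ref{LemNeharyE}(\ref{2.1}) for $\mathcal{M}_{V_0}$ gives $t_n\geq c>0$, so up to a subsequence $t_n\to t_0\in(0,1]$. Since $t_0v\neq 0$, Lemma \ref{lem4.3} applied to $(t_nv_n)$ yields $t_nv_n\to t_0v$ in $H^s(\mathbb{R}^N,\mathbb{R})$, and dividing by $t_n$ gives $v_n\to v$ strongly in $H^s$.

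Finally, I would identify $y:=\lim\eps_n\tilde y_n\in M$. Setting $y_n:=\eps_n\tilde y_n$, the sandwich of the previous paragraph forces its intermediate gap to vanish:
$$\frac{t_n^2}{2}\int_{\mathbb{R}^N}\bigl(V(\eps_n x+y_n)-V_0\bigr)v_n^2\,dx\to 0.$$
The integrand is nonnegative since $V\geq V_0$. If $|y_n|\to\infty$ along a subsequence, then $\eps_n x+y_n\to\infty$ for each fixed $x$, and by \eqref{condV} and Fatou's lemma
$$\liminf_n\int\bigl(V(\eps_nx+y_n)-V_0\bigr)v_n^2\,dx\geq (V_\infty-V_0)|v|_2^2>0,$$
a contradiction. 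Hence $(y_n)$ is bounded; extracting a further subsequence with $y_n\to y$, continuity of $V$, the strong $L^2$-convergence $v_n\to v$, and Fatou yield $(V(y)-V_0)|v|_2^2\leq 0$, so $V(y)=V_0$ and $y\in M$. The main obstacle is the sandwich argument: because $J_{\eps_n}$ is magnetic one cannot directly compare it with the scalar Nehari manifold of the limit problem, and it is the diamagnetic inequality applied to the rescaled $t_nu_n$ that supplies the bridge between $I_{V_0}(t_nv_n)$ and $J_{\eps_n}(u_n)$, allowing Lemma \ref{lem4.3} to do its work.
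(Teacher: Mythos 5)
Your proof is correct and follows the same overall strategy as the paper: boundedness of $(u_n)$ from the Nehari constraint and the energy level, non-vanishing via the Lions lemma (ruled out by Lemma \ref{LemNeharyE}(\ref{2.1})), translation to get a nontrivial weak limit $v$, projection $t_n v_n\in\mathcal{M}_{V_0}$, the diamagnetic squeeze $c_{V_0}\leq I_{V_0}(t_nv_n)\leq J_{\eps_n}(u_n)\to c_{V_0}$ to make $(t_nv_n)$ a minimizing sequence, and then Lemma \ref{lem4.3} to upgrade to strong convergence.

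The one place you genuinely diverge is in identifying $y\in M$. The paper splits into the cases $V_\infty=\infty$ (Fatou directly on $\int V(\eps_n x+y_n)v_n^2$) and $V_\infty<\infty$ (a contradiction chain $c_{V_0}=I_{V_0}(w)<I_{V_\infty}(w)\leq\liminf J_{\eps_n}(t_nu_n)\leq c_{V_0}$), and then repeats the second argument for $y\notin M$. You instead extract from the sandwich that the nonnegative gap
\[
\frac{t_n^2}{2}\int_{\R^N}\bigl(V(\eps_n x+y_n)-V_0\bigr)v_n^2\,dx
\]
must vanish (since it is one of the two nonnegative pieces of $J_{\eps_n}(t_nu_n)-I_{V_0}(t_nv_n)\to 0$), and then a single Fatou argument with the pointwise limit of $V(\eps_n x+y_n)$ rules out both $|y_n|\to\infty$ and $V(y)>V_0$. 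This is a mild but real streamlining: it dispenses with the case split on $V_\infty$ and treats the boundedness of $(y_n)$ and the conclusion $y\in M$ in one stroke. You also extract $t_n\leq1$ explicitly from the Nehari identities via the strict monotonicity in (\ref{f5}), whereas the paper contents itself with boundedness of $(t_n)$; both suffice, but your bound is sharper and supplies a clean alternative to the paper's weak compactness argument for $(t_n)$. The rest of the reasoning (non-degeneracy $t_0>0$, application of Lemma \ref{lem4.3} to $(t_nv_n)$, division by $t_n$) matches the paper.
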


\begin{proof}
	Since $\langle J'_{\e_{n}}(u_{n}), u_{n} \rangle=0$ and $J_{\e_{n}}(u_{n})\rightarrow c_{V_{0}}$, we easily get that there exists $C>0$ such that $\|u_{n}\|_{\e_{n}}\leq C$ for all $n\in\mathbb{N}$.
	Let us observe that $\|u_{n}\|_{\e_{n}}\nrightarrow 0$ since $c_{V_{0}}>0$.
	Therefore, as in the proof of Lemma \ref{compactness}, we can find a sequence $(\tilde{y}_{n})\subset \R^{N}$ and constants $R, \beta>0$ such that
	\begin{equation}\label{tv21}
	\liminf_{n}\int_{B_{R}(\tilde{y}_{n})} |u_{n}|^{2} dx\geq \beta.
	\end{equation}
	Let us define 
	\begin{equation*}
	v_{n}(x):=|u_{n}|(x+ \tilde{y}_{n}). 
	\end{equation*}
	By the diamagnetic inequality  \eqref{eqDI} we get the boundedness of $(|u_{n}|)$ in $H^{s}(\R^{N}, \R)$ and, using  \eqref{tv21}, we may suppose that $v_{n}\rightharpoonup v$ in $H^{s}(\R^{N}, \R)$ for some $v\neq 0$.\\
	Let $(t_{n})\subset (0, +\infty)$ be such that $w_{n}=t_{n} v_{n}\in \mathcal{M}_{V_{0}}$, and set $y_{n}:=\e_{n}\tilde{y}_{n}$.  \\
	By
	\eqref{eqDI}, we can see that
	\begin{align*}
	c_{V_{0}}\leq I_{V_{0}}(w_{n})\leq \max_{t\geq 0} J_{\e_{n}}(t u_{n})=J_{\e_{n}}(u_{n})=c_{V_{0}}+ o_{n}(1), 
	\end{align*}
	which yields $I_{V_{0}}(w_{n})\rightarrow c_{V_{0}}$. \\
	Now, the sequence $(t_{n})$ is bounded since $(v_{n})$ and $(w_{n})$, by Lemma \ref{lem4.3},
	are bounded in $H^{s}(\R^{N}, \R)$ and $v_{n}\nrightarrow 0$ in $H^{s}(\R^{N}, \R)$. Therefore, up to a subsequence, we may assume that $t_{n}\rightarrow t_{0}\geq 0$.\\
	Let us show that $t_{0}>0$.\\
	In fact, if $t_{0}=0$, from the boundedness of $(v_{n})$, we get $w_{n}= t_{n}v_{n} \rightarrow 0$ in $H^{s}(\R^{N}, \R)$, that is $I_{V_{0}}(w_{n})\rightarrow 0$ in contrast with the fact $c_{V_{0}}>0$.\\
	Thus, up to a subsequence, we may assume that $w_{n}\rightharpoonup w:= t_{0} v\neq 0$ in $H^{s}(\R^{N}, \R)$. \\
	From Lemma \ref{lem4.3}, we can deduce that $w_{n} \rightarrow w$ in $H^{s}(\R^{N}, \R)$, which gives $v_{n}\rightarrow v$ in $H^{s}(\R^{N}, \R)$.\\
	Now show that $(y_{n})$ has a subsequence such that $y_{n}\rightarrow y\in M$.\\
	Assume by contradiction that $(y_{n})$ is not bounded, that is there exists a subsequence, still denoted by $(y_{n})$, such that $|y_{n}|\rightarrow +\infty$. \\
	Firstly, we deal with the case $V_{\infty}=\infty$. \\
	Taking into account \eqref{eqDI}, we can see that
	\[
	\int_{\R^{N}} V(\e_{n}x+y_{n})|v_{n}|^{2} dx\leq [|v_{n}|]^{2}+\int_{\R^{N}} V(\e_{n}x+y_{n})|v_{n}|^{2} dx
	\leq \|u_{n}\|^{2}_{\e_{n}}\leq C.
	\]
	On the other hand, by Fatou's Lemma, we deduce that
	\[
	\liminf_{n} \int_{\R^{N}} V(\e_{n}x+y_{n})|v_{n}|^{2}dx=\infty
	\]
	and we get a contradiction.\\
	Now, let us consider the case $V_{\infty}<\infty$. \\
	Since $w_{n}\rightarrow w$ strongly in $H^{s}(\R^{N}, \R)$, $V_{0}<V_{\infty}$, and by using \eqref{eqDI}, we obtain
	\begin{equation}\label{tv22}
	\begin{split}
	c_{V_{0}}&= I_{V_{0}}(w) < I_{V_{\infty}} (w)\\
	&\leq \liminf_{n} \left[\frac{1}{2} [w_{n}]^{2}+\frac{1}{2} \int_{\R^{N}} V(\e_{n} x+y_{n})|w_{n}|^{2} dx-\int_{\R^{N}} F(|w_{n}|^{2}) dx\right] \\
	&=\liminf_{n} \left[\frac{t^{2}_{n}}{2} [|u_{n}|]^{2}+\frac{t^{2}_{n}}{2} \int_{\R^{N}} V(\e_{n} z)|u_{n}|^{2} dx-\int_{\R^{N}} F(t_{n}^{2} |u_{n}|^{2}) dx\right] \\
	&\leq \liminf_{n} J_{\e_{n}}(t_{n}u_{n}) \leq \liminf_{n} J_{\e_{n}} (u_{n})=c_{V_{0}}
	\end{split}
	\end{equation}
	which gives a contradiction. \\
	Thus $(y_{n})$ is bounded and, up to a subsequence, we may assume that $y_{n}\rightarrow y$. If $y\notin M$, then $V_{0}<V(y)$ and we can argue as in \eqref{tv22} to get a contradiction and so the proof is complete.	
\end{proof}

\noindent
At this point, we introduce a subset $\widetilde{\N}_{\e}$ of $\N_{\e}$ by setting 
$$
\widetilde{\N}_{\e}=\{u\in \N_{\e}: J_{\e}(u)\leq c_{V_{0}}+h(\e)\},
$$
where $h:\R_{+}\rightarrow \R_{+}$ is such that $h(\varsigma)\rightarrow 0$ as $\varsigma\rightarrow 0$.\\
Fixed $y\in M$, we conclude from Lemma \ref{lem4.1} that $h(\varsigma)=|J_{\varsigma}(\Phi_{\varsigma}(y))-c_{V_{0}}|\rightarrow 0$ as $\varsigma \rightarrow 0$. Hence $\Phi_{\e}(y)\in \widetilde{\N}_{\e}$, and $\widetilde{\N}_{\e}\neq \emptyset$ for any $\e>0$.

Moreover, we have the following relation between $\widetilde{\N}_{\e}$ and the barycenter map.

\begin{Lem}\label{lem4.4}We have
	$$
	\lim_{\e \rightarrow 0} \sup_{u\in \widetilde{\mathcal{N}}_{\e}} \operatorname{dist}(\beta_{\e}(u), M_{\delta})=0.
	$$
\end{Lem}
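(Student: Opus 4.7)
The plan is to argue by contradiction: suppose there exist $\kappa>0$, $\eps_n\to 0^+$ and $u_n\in\widetilde{\N}_{\eps_n}$ with $\operatorname{dist}(\beta_{\eps_n}(u_n),M_\delta)\ge\kappa$ for every $n$. The first task is to verify that $J_{\eps_n}(u_n)\to c_{V_0}$. On one side, $u_n\in\N_{\eps_n}$ together with the variational characterization gives $c_{\eps_n}\le J_{\eps_n}(u_n)$; on the other side, membership in $\widetilde{\N}_{\eps_n}$ gives $J_{\eps_n}(u_n)\le c_{V_0}+h(\eps_n)$ with $h(\eps_n)\to 0$. A short separate observation (using $V(\eps x)\ge V_0$ for the lower bound and Lemma \ref{lem4.1} applied to $\Phi_{\eps_n}(y)$ for an arbitrary $y\in M$ for the upper bound) pins down $c_{\eps_n}\to c_{V_0}$, so $J_{\eps_n}(u_n)\to c_{V_0}$.

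Having this, I would invoke Proposition \ref{prop4.1}: there exists $(\tilde y_n)\subset\R^N$ such that, along a subsequence, $v_n(x):=|u_n|(x+\tilde y_n)$ converges in $H^s(\R^N,\R)$ to some $v\neq 0$, and $y_n:=\eps_n\tilde y_n\to y\in M$. The aim now is to show $\beta_{\eps_n}(u_n)\to y$, which, since $y\in M\subset M_\delta$, contradicts the standing hypothesis $\operatorname{dist}(\beta_{\eps_n}(u_n),M_\delta)\ge\kappa$.

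The computation is essentially bookkeeping. Changing variables $x=z+\tilde y_n$ in the definition of $\beta_{\eps_n}$ and using $|u_n|(x)=v_n(x-\tilde y_n)$ yields
\[
\beta_{\eps_n}(u_n)= y_n + \frac{\displaystyle\int_{\R^N}\bigl[\Upsilon(\eps_n z+y_n)-y_n\bigr]\,v_n^2(z)\,dz}{\displaystyle\int_{\R^N} v_n^2(z)\,dz}.
\]
Since $y_n\to y\in M\subset B_\rho$, and $\Upsilon$ is continuous and equals the identity on $B_\rho$, for each fixed $z$ we have $\Upsilon(\eps_n z+y_n)-y_n\to 0$ as $n\to\infty$. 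Boundedness of $\Upsilon$ on $\R^N$ (by $\rho$) together with $v_n^2\to v^2$ in $L^1(\R^N)$ (consequence of the strong $H^s$-convergence and the fractional Sobolev embedding) allows an application of the Dominated Convergence Theorem, giving that the second term tends to $0$. Hence $\beta_{\eps_n}(u_n)\to y\in M$, contradicting the assumption.

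The only step that requires a little care is the identification $J_{\eps_n}(u_n)\to c_{V_0}$ so that Proposition \ref{prop4.1} is applicable; once this is in place the rest is a routine translation-and-dominated-convergence argument entirely analogous to the one used in Lemma \ref{lem4.2}.
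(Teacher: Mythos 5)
Your proposal is correct and follows essentially the same route as the paper: verify $J_{\eps_n}(u_n)\to c_{V_0}$, invoke Proposition \ref{prop4.1} to obtain the translated convergent subsequence and $y_n\to y\in M$, then change variables in $\beta_{\eps_n}$ and apply dominated convergence. The only organizational differences are cosmetic: you argue by contradiction whereas the paper picks a sequence nearly realizing the supremum, and you take a slight detour through proving $c_{\eps_n}\to c_{V_0}$ (for which the lower bound $c_{\eps_n}\ge c_{V_0}$ requires the diamagnetic inequality \eqref{eqDI} in addition to $V\ge V_0$, which you should mention explicitly), whereas the paper squeezes $J_{\eps_n}(u_n)$ directly via $c_{V_0}\leq\max_{t\geq0}I_{V_0}(t|u_n|)\leq\max_{t\geq0}J_{\eps_n}(tu_n)=J_{\eps_n}(u_n)\leq c_{V_0}+h(\eps_n)$ without reference to $c_{\eps_n}$.
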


\begin{proof}
	Let $\e_{n}\rightarrow 0$ as $n\rightarrow \infty$. For any $n\in \mathbb{N}$, there exists $(u_{n})\in \widetilde{\N}_{\e_{n}}$ such that
	$$
	\sup_{u\in \widetilde{\N}_{\e_{n}}} \inf_{y\in M_{\delta}}|\beta_{\e_{n}}(u)-y|=\inf_{y\in M_{\delta}}|\beta_{\e_{n}}(u_{n})-y|+o_{n}(1).
	$$
	Therefore, it is suffices to prove that there exists $(y_{n})\subset M_{\delta}$ such that 
	\begin{equation}\label{3.13}
	\lim_{n} |\beta_{\e_{n}}(u_{n})-y_{n}|=0.
	\end{equation}
	By using the diamagnetic inequality \eqref{eqDI}, we can see that $I_{V_{0}}(t|u_{n}|)\leq J_{\e_{n}}(t u_{n})$ for any $t\geq 0$. Therefore, recalling that $(u_{n})\subset  \widetilde{\N}_{\e_{n}}\subset  \N_{\e_{n}}$, we can deduce that
	$$
	c_{V_{0}}\leq \max_{t\geq 0} I_{V_{0}}(t|u_{n}|)\leq \max_{t\geq 0} J_{\e_{n}}(t u_{n})=J_{\e_{n}}(u_{n})\leq c_{V_{0}}+h(\e_{n})
	$$
	which implies that $J_{\e_{n}}(u_{n})\rightarrow c_{V_{0}}$ because of $h(\e_{n})\rightarrow 0$ as $n\rightarrow \infty$. \\
	From Proposition \ref{prop4.1} it follows that there exists $(\tilde{y}_{n})\subset \R^{N}$ such that $y_{n}=\e_{n}\tilde{y}_{n}\in M_{\delta}$ for $n$ sufficiently large. \\
	Thus
	$$
	\beta_{\e_{n}}(u_{n})=y_{n}+\frac{\int_{\R^{N}}[\Upsilon(\e_{n}z+y_{n})-y_{n}] |u_{n}(z+\tilde{y}_{n})|^{2} \, dz}{\int_{\R^{N}} |u_{n}(z+\tilde{y}_{n})|^{2} \, dz}.
	$$
	Since, up to a subsequence, $|u_{n}|(\cdot+\tilde{y}_{n})$ converges strongly in $H^{s}(\R^{N}, \R)$ and $\e_{n}z+y_{n}\rightarrow y\in M$ for any $z\in\R^N$, we deduce (\ref{3.13}).
\end{proof}

Now, we are ready to present the proof of our multiplicity result.

\begin{proof}[Proof of Theorem \ref{thmf}]
	Given $\delta>0$, we can apply Lemma \ref{lem4.1}, Lemma \ref{lem4.2} and Lemma \ref{lem4.4} and argue as in  \cite[Section $6$]{CL} to find $\e_{\delta}>0$ such that for any $\e\in (0, \e_{\delta})$, the diagram
	$$
	M \stackrel{\Phi_{\e}}{\rightarrow}  \widetilde{\N}_{\e} \stackrel{\beta_{\e}}{\rightarrow} M_{\delta}
	$$
	is well-defined and $\beta_{\e}\circ \Phi_{\e}$ is  homotopically equivalent to the embedding $\iota: M\rightarrow M_{\delta}$. 
	This fact and \cite[Lemma 4.3]{BC} (see also \cite[Lemma 2.2]{CLDE}) yield
	$$
	\operatorname{cat}_{\widetilde{\N}_{\e}}(\widetilde{\N}_{\e})\geq \operatorname{cat}_{M_{\delta}}(M).
	$$
	From the definition $\widetilde{\N}_{\e}$ and Proposition \ref{prop2.2}, we know that $J_{\e}$ verifies the Palais-Smale condition in $\widetilde{\N}_{\e}$ (taking $\e_{\delta}$ smaller if necessary), so we can apply  standard Ljusternik-Schnirelmann  theory for $C^{1}$ functionals (see \cite[Theorem 5.20]{W}) to obtain at least $\operatorname{cat}_{M_{\delta}}(M)$ critical points of $J_{\e}$ restricted to $\N_{\e}$. From Corollary \ref{cor2.1}, we can deduce that $J_{\e}$ has at least $\operatorname{cat}_{M_{\delta}}(M)$ critical points in $\h$.
\end{proof}


\begin{thebibliography}{99}

	
\bibitem{AFF}	
C.O. Alves, G.M. Figueiredo, M.F. Furtado, 
{\it Multiple solutions for a nonlinear Schr\"odinger equation with magnetic fields},
Comm. Partial Differential Equations {\bf 36} (2011), 1565--1586.	

\bibitem{AM}
C.O. Alves, O.H. Miyagaki,
{\it Existence and concentration of solution for a class of fractional elliptic equation in $\R^{N}$ via penalization method},
Calc. Var. Partial Differential Equations {\bf 55} (2016), art. 47, 19 pp.

\bibitem{A}
V. Ambrosio,
{\it Ground states for superlinear fractional Schr\"odinger equations in $\mathbb{R}^{N}$},
Ann. Acad. Sci. Fenn. Math. {\bf 41} (2016), 745--756.

\bibitem{A1}
V. Ambrosio,
{\it Multiplicity of positive solutions for a class of fractional Schr\"odinger equations via penalization method},
to appear on Annali di Matematica Pura e Applicata (2017), doi: \href{https://doi.org/10.1007/s10231-017-0652-5}{10.1007/s10231-017-0652-5}.

\bibitem{A3}
V. Ambrosio,
{\it Mountain pass solutions for the fractional Berestycki-Lions problem},
to appear in Advances in Differential Equations (2017).

\bibitem{A6}
V. Ambrosio,
{\it Concentrating solutions for a class of nonlinear fractional Schrödinger equations in $\R^{N}$},
preprint arXiv: \href{https://arxiv.org/abs/1612.02388}{1612.02388}.


\bibitem{AS}
G. Arioli, A. Szulkin,
{\it A semilinear Schr\"odinger equation in the presence of a magnetic field},
Arch. Ration. Mech. Anal. {\bf 170} (2003), 277--295.
	
\bibitem{BC}
V. Benci, G. Cerami, 
{\it Multiple positive solutions of some elliptic problems via the Morse theory and the domain topology}, 
Calc. Var. Partial Differential Equations {\bf 2} (1994), 29--48.
	
\bibitem{Cheng}
M. Cheng,
{\it Bound state for the fractional Schr\"odinger equation with unbounded potential},
J. Math. Phys. {\bf 53} (2012), 043507, 7 pp.

\bibitem{Cingolani}
S. Cingolani,
{\it Semiclassical stationary states of nonlinear Schr\"odinger equations with an external magnetic field} 
J. Differential Equations {\bf 188} (2003), 52--79.

\bibitem{CL}
S. Cingolani, M. Lazzo, 
{\it Multiple semiclassical standing waves for a class of nonlinear Schr\"odinger equations},
 Topol. Methods Nonlinear Anal. {\bf 10} (1997), 1--13.
 
 \bibitem{CLDE}
 S. Cingolani, M. Lazzo,
 {\it Multiple positive solutions to nonlinear Schr\"odinger equations with competing potential functions},
 J. Differential Equations {\bf 160} (2000), 118--138.



\bibitem{Cingolani-Secchi}
S. Cingolani, S. Secchi,
{\it Semiclassical states for NLS equations with magnetic potentials having polynomial growths},
J. Math. Phys. {\bf 46} (2005), 053503, 19 pp.

\bibitem{DSS}
P. d'Avenia, G. Siciliano, M. Squassina,
{\it On fractional Choquard equations},
Math. Models Methods Appl. Sci. {\bf 25} (2015), 1447--1476.

\bibitem{DS}
P. d'Avenia, M. Squassina,
{\it Ground states for fractional magnetic operators},
to appear on ESAIM Control Optim. Calc. Var., doi: \href{https://doi.org/10.1051/cocv/2016071}{10.1051/cocv/2016071}.


\bibitem{DPV}
E. Di Nezza, G. Palatucci, E. Valdinoci, 
{\it Hitchhiker's guide to the fractional Sobolev spaces}, 
Bull. Sci. math. {\bf 136} (2012), 521--573.


\bibitem{DPPV}
S. Dipierro, G. Palatucci, E. Valdinoci,
{\it Existence and symmetry results for a Schr\"odinger type problem involving the fractional Laplacian},
Le Matematiche (Catania) {\bf 68}, 201--216 (2013).


\bibitem{DL}
Y. Ding, F. Lin,
{\it Solutions of perturbed Schr\"odinger equations with critical nonlinearity},
Calc. Var. Partial Differential Equations {\bf 30} (2007), 231--249. 

\bibitem{EL}
M. Esteban, P.L. Lions,
{\it Stationary solutions of nonlinear Schr\"odinger equations with an external magnetic field}, 
Partial differential equations and the calculus of variations, Vol. I, 401--449, Progr. Nonlinear Differential Equations Appl., 1, Birkh\"auser Boston, Boston, MA, 1989.

\bibitem{FQT}
P. Felmer, A. Quaas, J.Tan,
{\it Positive solutions of the nonlinear {S}chr{\"o}dinger equation with the fractional Laplacian},
Proc. Roy. Soc. Edinburgh Sect. A {\bf 142} (2012), 1237--1262.

\bibitem{FigS}
G.M. Figueiredo, G. Siciliano,
{\it A multiplicity result via Ljusternick-Schnirelmann category and Morse theory for a fractional Schr\"odinger equation in $\R^{N}$},
NoDEA Nonlinear Differential Equations Appl. {\bf 23} (2016), art. 12, 22 pp.

\bibitem{FPV}
A. Fiscella, A. Pinamonti, E. Vecchi,
{\it Multiplicity results for magnetic fractional problems},
J. Differential Equations {\bf 263} (2017), 4617--4633. 


\bibitem{HT}
D.D. Haroske, H. Triebel,
{\it Distributions, Sobolev spaces, elliptic equations},
EMS Textbooks in Mathematics, European Mathematical Society (EMS), Zürich, 2008.

\bibitem{HZ}
X. He, W. Zou,
{\it Existence and concentration result for the fractional Schr\"odinger equations with critical nonlinearities},
Calc. Var. Partial Differential Equations {\bf 55} (2016), art. 91, 39 pp.

\bibitem{I10}
T. Ichinose,
{\it Magnetic relativistic Schr\"odinger operators and imaginary-time path integrals},
Mathematical physics, spectral theory and stochastic analysis, 247--297, Oper. Theory Adv. Appl. {\bf 232}, Birkh\"auser/Springer Basel AG, Basel, 2013.

\bibitem{IT}
T. Ichinose, H. Tamura,
{\it Imaginary-time path integral for a relativistic spinless particle in an electromagnetic field},
Comm. Math. Phys. {\bf 105} (1986), 239--257. 

\bibitem{K}
K. Kurata,
{\it Existence and semi-classical limit of the least energy solution to a nonlinear Schr\"odinger equation with electromagnetic fields}, 
Nonlinear Anal. {\bf 41} (2000), 763--778.

\bibitem{MPSZ}
X. Mingqui, P. Pucci, M. Squassina, B. Zhang,
{\it Nonlocal Schrodinger-Kirchhoff equations with external magnetic field},
Discrete Contin. Dyn. Syst. A {\bf 37} (2017), 503--521.

\bibitem{MRS}
G. Molica Bisci, V. R\u{a}dulescu, R. Servadei,
{\it Variational Methods for Nonlocal Fractional Problems},
{\em Cambridge University Press}, \textbf{162} Cambridge, 2016.

\bibitem{NPSV}
H.-M. Nguyen, A. Pinamonti, M. Squassina, E. Vecchi,
{\it New characterization of magnetic Sobolev spaces},
preprint, arXiv: \href{https://arxiv.org/abs/1703.09801}{1703.09801}.

\bibitem{PP}
G. Palatucci, A. Pisante,
{\it Improved Sobolev embeddings, profile decomposition, and concentration-compactness for fractional Sobolev spaces}, 
Calc. Var. Partial Differential Equations {\bf 50} (2014), 799--829.

\bibitem{PSV1}
A. Pinamonti, M. Squassina, E. Vecchi,
{\it The Maz'ya-Shaposhnikova limit in the magnetic setting},
J. Math. Anal. Appl. {\bf 449} (2017), 1152--1159.

\bibitem{PSV2}
A. Pinamonti, M. Squassina, E. Vecchi,
{\it Magnetic BV functions and the Bourgain-Brezis-Mironescu formula},
to appear on Adv. Calc. Var. (2017), doi \href{https://doi.org/10.1515/acv-2017-0019}{10.1515/acv-2017-0019}.

\bibitem{Rab}
P. Rabinowitz,
{\it On a class of nonlinear Schr\"odinger equations}
Z. Angew. Math. Phys. {\bf 43} (1992), 270--291.

\bibitem{Secchi}
S. Secchi,
{\it Ground state solutions for nonlinear fractional Schr\"odinger equations in $\R^{N}$},
J. Math. Phys. {\bf 54} (2013), 031501.

\bibitem{SZ}
X. Shang, J. Zhang,
{\it Ground states for fractional Schr\"odinger equations with critical growth},
Nonlinearity {\bf 27} (2014),  187--207.

\bibitem{SV}
M. Squassina, B. Volzone,
{\it Bourgain-Brezis-Mironescu formula for magnetic operators},
C. R. Math. Acad. Sci. Paris {\bf 354} (2016), 825--831.

\bibitem{W}
M. Willem,
{\it Minimax theorems},
Progress in Nonlinear Differential Equations and their Applications 24, Birkh\"auser Boston, Inc., Boston, MA, 1996.

\bibitem{ZSZ}
B. Zhang, M. Squassina, X. Zhang, 
{\it Fractional NLS equations with magnetic field, critical frequency and critical growth},
to appear on Manuscripta Math., doi: \href{https://doi.org/10.1007/s00229-017-0937-4}{10.1007/s00229-017-0937-4}.

\end{thebibliography}
\end{document}